\newtheorem{theorem}{Theorem}[section]
\newtheorem{proposition}[theorem]{Proposition}
\newtheorem{remark}[theorem]{Remark}
\newtheorem{Claim}[theorem]{Claim}
\newtheorem{question}[theorem]{Question}
\newtheorem{definition}[theorem]{Definition}
\newtheorem{corollary}[theorem]{Corollary}
\newtheorem{lemma}[theorem]{Lemma}
\newtheorem{example}[theorem]{Example}
\newtheorem{conjecture}[theorem]{Conjecture}
\title{Ricci Curvature, Reeb Flows and Contact 3-Manifolds}
\author{Surena Hozoori}
\newcommand{\Addresses}{{
  \bigskip
  \footnotesize

Surena Hozoori, \textsc{Department of Mathematics, Georgia Institute of Technology,
    Atlanta, GA 30332}\par\nopagebreak
  \textit{E-mail address}: \texttt{shozoori3@gatech.edu}

}}
\begin{document}
\maketitle
\noindent
\begin{abstract}
Given a contact 3-manifold we consider the problem of when a given function can be realized as the Ricci curvature of a Reeb vector field for the contact structure. We will use topological tools to show that every admissible function can be realized as such Ricci curvature for a singular metric which is an honest compatible metric away from a measure zero set. However, we will see that resolving such singularities depends on contact topological data and is yet to be fully understood.

\end{abstract}

\section{Introduction}

In Riemannian geometry, it is well known that local restrictions on a Riemannian metric, in particular its curvature tensor, can result in topological consequences. A classical example is the celebrated sphere theorem, introduced by Berger \cite{berger} and Klingenberg \cite{kling} in early 1960s:

\begin{theorem}
Let $(M,g)$ be a Riemannian manifold of arbitrary dimension $n$ with $\frac{1}{4}$-pinched sectional curvature. i.e. if there exists some positive constant $K$, for which $\frac{1}{4}K<Sec(g)\leq K$. Then the universal cover of $M$ is homeomorphic to $\mathbb{S}^n$.
\end{theorem}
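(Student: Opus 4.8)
The plan is to exhibit the universal cover of $M$ as a union of two topological $n$-cells and then invoke a classical gluing theorem. Since the pinching hypothesis pulls back to every covering, I would first replace $M$ by its universal cover $\widetilde M$ — complete since $M$ is, simply connected, and hence orientable — and rescale the metric so that $K=1$, i.e.\ $\tfrac14 < Sec(\widetilde g)\le 1$. The lower bound gives $\mathrm{Ric}(\widetilde g) > (n-1)\tfrac14\,\widetilde g$, so by the Bonnet--Myers theorem $\widetilde M$ is compact with $\widetilde\rho := \mathrm{diam}(\widetilde M) \le 2\pi$; moreover $\widetilde\rho < 2\pi$, since equality in Bonnet--Myers would force $\widetilde M$ to be the round sphere of constant curvature $\tfrac14$, contradicting $Sec>\tfrac14$. (In particular $\pi_1(M)$ is finite, though the statement to be proved concerns $\widetilde M$ only.) Fix points $p,q\in\widetilde M$ with $d(p,q)=\widetilde\rho$.

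The geometric core is the injectivity radius bound $\mathrm{inj}(\widetilde M)\ge\pi$, and this is where simple connectivity enters. Recall that $\mathrm{inj}(\widetilde M)$ equals the smaller of the conjugate radius of $\widetilde M$ and half the length $\ell$ of a shortest closed geodesic. The Rauch comparison theorem and the upper bound $Sec\le 1$ give that the conjugate radius is $\ge\pi$, so it remains to show $\ell\ge 2\pi$; this is Klingenberg's lemma, proved by examining a shortest closed geodesic — its midpoint and the two minimizing arcs from that midpoint to the basepoint — and playing the upper bound $Sec\le 1$ against the lower bound $Sec>\tfrac14$. The lower pinching is genuinely needed here when $n$ is odd; when $n$ is even one can instead invoke Klingenberg's estimate from $0<Sec\le 1$ together with orientability. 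Granting $\mathrm{inj}(\widetilde M)\ge\pi$, for every $x$ the exponential map restricts to a diffeomorphism from the open $\pi$-ball of $T_x\widetilde M$ onto the metric ball $B(x,\pi)$, so $B(p,\pi)$ and $B(q,\pi)$ are each homeomorphic to the open disk $D^n$.

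Next I would prove that the open balls $B(p,\pi)$ and $B(q,\pi)$ already cover $\widetilde M$. If some point $m$ had $d(p,m)\ge\pi$ and $d(q,m)\ge\pi$, then comparing the geodesic triangle with vertices $p,m,q$ against a comparison triangle in the model surface of constant curvature $\tfrac14$ (Toponogov's theorem, which uses $Sec\ge\tfrac14$), while exploiting $d(p,q)=\widetilde\rho<2\pi$ together with the bound $Sec\le 1$ along the three minimizing sides, would yield a contradiction. This is the second delicate step, and like the first it uses both curvature bounds. Hence $\widetilde M$ is covered by two open sets homeomorphic to $\mathbb R^n$.

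Finally, a closed connected manifold that is the union of two open sets each homeomorphic to $\mathbb R^n$ is homeomorphic to $\mathbb S^n$: shrinking the balls to a radius slightly below $\pi$ (which still covers $\widetilde M$, by compactness) presents $\widetilde M$ as two closed $n$-disks glued along their common bicollared boundary sphere, and any such manifold is homeomorphic to $\mathbb S^n$ by the generalized Schoenflies theorem of Brown and Mazur. This shows $\widetilde M$ is homeomorphic to $\mathbb S^n$, which is exactly the assertion. I expect the injectivity radius estimate — particularly the odd-dimensional case, where the full strength of the pinching is exploited — to be the main obstacle, with the two-ball covering a close second. I would also stress that the argument yields only a homeomorphism: gluing two disks along a diffeomorphism of the boundary sphere can produce an exotic (``twisted'') sphere, which cannot in general be smoothed, and this is precisely why the theorem claims a homeomorphism rather than a diffeomorphism.
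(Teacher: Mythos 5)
The paper itself offers no proof of this statement: it is quoted in the introduction as the classical Berger--Klingenberg sphere theorem, with citations to Berger and Klingenberg, so there is no in-paper argument to compare against. Your outline is the standard classical proof and is essentially sound: rescale to $\tfrac14<Sec\le 1$, use Bonnet--Myers plus strictness of the pinching (on a compact manifold $Sec>\tfrac14$ improves to $Sec\ge\delta>\tfrac14$, which gives $\mathrm{diam}<2\pi$ directly, an alternative to invoking Cheng's rigidity), then Klingenberg's injectivity radius estimate $\mathrm{inj}(\widetilde M)\ge\pi$ (you correctly flag the odd-dimensional case as the place where the full $\tfrac14$-pinching is needed), the covering $\widetilde M=B(p,\pi)\cup B(q,\pi)$ for a diametrically maximal pair, and finally the union-of-two-cells theorem, with the honest caveat that only a homeomorphism is obtained. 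Two points of imprecision worth noting: in the covering step the upper bound $Sec\le 1$ enters only through $\mathrm{inj}\ge\pi$ and $\mathrm{diam}\ge\pi$; the contradiction itself is usually a \emph{hinge} comparison (Toponogov with the lower bound $\delta>\tfrac14$ alone) combined with Berger's lemma that at the far point $q$ every direction admits a minimizing geodesic to $p$ making angle at most $\pi/2$, rather than a three-sided triangle comparison using both bounds along the sides. And the shrunken closed balls $\overline B(p,r)$, $\overline B(q,r)$ do not literally meet along a common boundary sphere, so the last step should either quote Brown's theorem that a closed manifold covered by two open $n$-cells is homeomorphic to $\mathbb{S}^n$, or construct the homeomorphism by interpolating along minimizing geodesics as in the standard textbook treatments; with those repairs the argument is the classical one.
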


In dimension 3, this was generalized extensively by Hamilton and his theory of Ricci flow \cite{hamilton} in 1982:

\begin{theorem}
Let $(M,g)$ be a Riemannian 3-manifold such that $Ricci(g)>0$. Then the universal cover of $M$ is diffeomorphic to $\mathbb{S}^3$.
\end{theorem}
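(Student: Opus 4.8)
The plan is to run the Ricci flow starting from the given metric and show it deforms $g$ toward a metric of constant positive curvature, so that $M$ carries such a metric, i.e. is a spherical space form. Concretely, I would consider the initial value problem $\partial_t g(t) = -2\,\mathrm{Ric}(g(t))$, $g(0)=g$. By the short-time existence theory for Ricci flow — which follows, after DeTurck's trick, from standard parabolic PDE theory on the closed manifold $M$ — there is a unique maximal solution on some interval $[0,T)$. The first step is to show $T<\infty$: since $\mathrm{Ric}(g)>0$ we have $R(g)\ge \rho_0>0$ at time $0$, and the evolution equation $\partial_t R = \Delta R + 2|\mathrm{Ric}|^2$ together with the algebraic inequality $|\mathrm{Ric}|^2 \ge \tfrac13 R^2$ (valid in dimension $3$) and the scalar maximum principle force $R_{\min}(\cdot,t)$ to satisfy $\tfrac{d}{dt}R_{\min}\ge \tfrac23 R_{\min}^2$, hence blow up by time $t=\tfrac{3}{2\rho_0}$; so $T<\infty$ and $\sup_M|\mathrm{Rm}|\to\infty$ as $t\to T$.

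The heart of the argument, and the step I expect to be hardest, is the curvature pinching estimate, which is special to dimension $3$. Here the Weyl tensor vanishes, so the full curvature operator is an algebraic function of the Ricci tensor; diagonalizing $\mathrm{Ric}$ with eigenvalues $\lambda\ge\mu\ge\nu$, the reaction terms in the evolution of the curvature operator become an explicit quadratic ODE system in $(\lambda,\mu,\nu)$. I would apply Hamilton's maximum principle for sections of the bundle of symmetric curvature operators to convex, ODE-invariant sets in order to prove: (i) $\mathrm{Ric}>0$ is preserved, so in particular $R>0$ for all $t\in[0,T)$; and (ii) the improved pinching $\lambda-\nu \le C\,R^{1-\delta}$ for constants $C<\infty$, $\delta>0$ depending only on $g$. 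Estimate (ii) says that wherever the curvature is large it is nearly isotropic. Combined with Shi's local derivative estimates bounding $|\nabla^k\mathrm{Rm}|$ in terms of $\sup_M|\mathrm{Rm}|$, this controls the geometry as $t\to T$.

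Finally I would pass to the normalized flow $\tilde g(t)=c(t)\,g(t)$, with $c(t)$ chosen so that $\mathrm{Vol}(\tilde g(t))$ is constant (equivalently, one rescales parabolically near the singular time). Using the pinching estimate (ii) and the derivative bounds, one shows the rescaled metrics have uniformly bounded, nondegenerate geometry and converge — subsequentially at first, then fully via an exponential-convergence argument — to a smooth limit metric $g_\infty$ on $M$. The pinching estimate forces $\mathrm{Ric}(g_\infty)=\tfrac{R_\infty}{3}\,g_\infty$ with $R_\infty$ a positive constant, i.e. $g_\infty$ is Einstein with positive scalar curvature; in dimension $3$ this is equivalent to $g_\infty$ having constant positive sectional curvature. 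Hence $(M,g_\infty)$ is a spherical space form, so its universal cover is isometric to a round $\mathbb{S}^3$, and in particular diffeomorphic to $\mathbb{S}^3$, which is the claim. The main obstacle throughout is item (ii): producing a differential inequality, realized as a preserved convex set for the reaction ODE, that is strong enough to kill the anisotropy of the curvature in the rescaled limit; the remaining ingredients are either standard parabolic theory or straightforward consequences of this estimate.
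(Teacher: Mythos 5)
Your outline is exactly Hamilton's Ricci-flow argument, which is the approach the paper relies on: it states this result as a classical theorem and cites Hamilton's 1982 paper rather than proving it. The steps you identify (short-time existence via DeTurck, preservation of $\mathrm{Ric}>0$, the pinching estimate $\lambda-\nu\le C R^{1-\delta}$, derivative/gradient control of the curvature, and convergence of the normalized flow to a constant-curvature metric) are precisely the ingredients of that proof, so there is nothing to compare beyond noting the agreement.
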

Beside the above {\ttfamily"}rigidity theorems{\ttfamily"}, we also have {\ttfamily"}flexibility theorems{\ttfamily"}, showing the lack of relation to topology. For instance, in 1994 Lokhamp \cite{lokhamp} showed:

\begin{theorem}\label{flex}
Let $M$ be a smooth manifold of arbitrary dimension. Then it admits a Riemannian metric $g$ with $Ricci(g)<0$.
\end{theorem}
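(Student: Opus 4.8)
The statement is to be read for $n := \dim M \ge 3$: in dimension $1$ it is vacuous, and in dimension $2$ it can only hold with the Gauss--Bonnet caveat that a closed surface carrying $Ricci<0$ must have negative Euler characteristic; so assume $n\ge 3$ throughout. The plan is to isolate a purely \emph{local} flexibility phenomenon for negative Ricci curvature and then propagate it over all of $M$ along a handle decomposition. The local step is easy in spirit but delicate in execution, and the propagation is where the real difficulty lies.

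\textbf{Local models.} Negative Ricci curvature is abundant locally. On $I\times N^{n-1}$ the warped product $g_\lambda = dr^2 + e^{2\lambda r}\,g_N$ has $Ricci(g_\lambda)(\partial_r,\partial_r) = -(n-1)\lambda^2$, while on $TN$ one computes $Ricci(g_\lambda) = Ricci_{g_N} - (n-1)\lambda^2 g_\lambda$, which is negative definite once $\lambda$ is large and $N$ ranges over a fixed compact piece. Allowing several warping functions, $dr^2 + \sum_i f_i(r)^2 g_{N_i}$, yields a large family in which the profiles $f_i$ may be prescribed with considerable freedom while $Ricci<0$ is retained. These (multiply) warped products are the building blocks of everything that follows.

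\textbf{The local deformation lemma.} The technical heart is a $C^0$-flexibility statement I would aim to establish: given any metric $g$ on a ball $B\subset M$ and any $\varepsilon>0$, there is a metric $g'$ with $Ricci(g')<0$ on a slightly smaller ball $B'\Subset B$, with $g'\equiv g$ near $\partial B$, and $\|g'-g\|_{C^0(B)}<\varepsilon$. The mechanism is to graft into the core of $B$ a "deep and thin" region modelled on the negatively curved warped products above, damped back down to $g$ through a long collar governed by a suitable convex cutoff profile. Since the Ricci tensor is linear in the second derivatives of $g$ and only quadratic in the first, the dominant term in the core can be made large and negative at essentially no $C^0$-cost — the perturbation is paid for entirely by large $C^1$- and $C^2$-norms, which are harmless. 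The subtle point is that the interpolating collar must not generate uncontrolled \emph{positive} Ricci; forcing this is the most delicate piece of the local analysis, and it is what dictates that the collar be long.

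\textbf{Globalization and the main obstacle.} Starting from an arbitrary metric $g_0$ on $M$, fix a handle decomposition and order the handles by index $0,1,\dots,n$. Inductively assume $g$ has been constructed with $Ricci(g)<0$, with a uniform gap, on an open neighborhood $U_k$ of the union of the handles of index $\le k$. Over each new $(k{+}1)$-handle, apply the local deformation lemma to drive $Ricci$ negative inside it while altering $g$ by only a $C^0$-small amount outside; the resulting regions cover a neighborhood $U_{k+1}$ of the $\le(k{+}1)$-handles, and after $n$ stages the metric is negatively Ricci-curved on all of $M$. The genuine obstacle — the technical core of Lohkamp's argument — is the overlap problem: deforming inside a new handle perturbs $Ricci$ on the portion of $U_k$ the handle meets, and because $Ricci$ depends on two derivatives of $g$, a merely $C^0$-small perturbation does \emph{not} a priori preserve its sign there. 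Overcoming this forces one to keep the relevant jets, not just the $C^0$-size, of every perturbation under quantitative control, to localize each new perturbation in a region where the current metric carries a definite negativity margin (typically exactly where it was just built to be very negatively curved), and to arrange that the errors introduced at each stage are dominated by the margin inherited from the previous one. Making this bookkeeping close up coherently through all $n$ inductive steps — rather than any single warped-product computation — is the hard part.
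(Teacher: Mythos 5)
This statement is not proved in the paper at all: it is Lohkamp's theorem, quoted as background (Theorem~\ref{flex}) with a citation to \cite{lokhamp}, so there is no internal argument to compare yours against. Your dimension caveat is correct (the result needs $\dim M\ge 3$; Gauss--Bonnet obstructs it on $S^2$ and $T^2$, and in the paper's setting $M$ is a 3-manifold anyway), and your warped-product curvature formulas are right. Your outline also matches the general shape of Lohkamp's strategy: a local ``decrease Ricci with $C^0$-small error, fixed near the boundary'' deformation, followed by a covering/induction over the manifold.

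However, as a proof there is a genuine gap, and it is essentially the whole theorem. The two statements you flag as the crux --- the local deformation lemma (graft a negatively curved core into a ball, interpolate back to $g$ through a collar without creating positive Ricci, all at small $C^0$-cost) and the globalization with control of the overlap regions --- are exactly the hard analytic content of Lohkamp's paper, and you assert them rather than prove them. The heuristic ``Ricci is linear in second derivatives, so large negative curvature costs nothing in $C^0$'' does not by itself produce the interpolation: a cutoff between the warped-product core and the ambient metric introduces second-derivative terms of both signs, and showing these can be arranged to keep $\mathrm{Ric}<0$ (or at least not destroy an existing negativity margin) requires the specific model constructions and quantitative estimates Lohkamp develops; the same applies to the jet bookkeeping in the handle induction, where a $C^0$-small perturbation can freely flip the sign of Ricci on the overlap, as you yourself note. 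So the proposal is a correct road map of the known proof, but none of its load-bearing steps is carried out; to close the gap you would either have to reproduce Lohkamp's local models and gluing estimates or simply cite \cite{lokhamp}, as the paper does.
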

\noindent which means negative Ricci curvature does not yield any information about the topology of the underlying manifold.

In this paper, we restrict our attention to dimension 3 and assume that $M$ is an oriented closed connected 3-manifold.

It is natural to ask whether results similar to above theorems hold in other categories of 3-manifolds, since after the proof of geometrization conjecture by Perelman, we can expect to be able to relate topological theories of 3-manifolds to their underlaying Riemannian geometry. On the other hand, we have learned that {\em{\ttfamily"}contact structures{\ttfamily"}}, first showed up in the works of Sophus Lie in 19th century and classically well-studied in different areas of mathematics such as Hamiltonian mechanics and optics, do have subtle and rich relation to the topology of 3-manifolds. Such relation to topology has been discovered since mid 70s and in Bennequin's study of knots in contact manifolds and is now an active area of low dimensional topology, thanks to the development of many topological methods to study contact manifolds, like convex surface theory, open book decompositions, J-holomorphic curves, Heegaard-Floer homology, etc (see \cite{his} for a brief history).

The Riemannian geometry of contact manifolds on the other hand, has been subject of a thorough study in different contexts, by many including Blair, Hamilton, Chern, etc. and by restricting to certain classes of Riemannian metrics, satisfying natural conditions related to the background contact structure (see \cite{bl} for a classical reference). However, we know very little about the global Riemannian geometry of such classes of metrics and therefore their relation to topological aspects of contact structures. A remarkable exception is the analogue of sphere theorem in the category of contact manifolds \cite{EKM1,JH}, when we restrict to a class of Riemannian metrics, namely {\em {\ttfamily"}compatible metrics{\ttfamily"}}, which seem to be a more natural class of metrics from topological point of view (for definitions and discussions related to such class of metrics, see Section~\ref{3}). It is worth mentioning that the class of compatible metrics is just a slight generalization of the well-studied class of {\em {\ttfamily"}contact metrics{\ttfamily"}} \cite{bl}.

\begin{theorem}
Let $(M,\xi)$ be a contact 3-manifold, admitting a compatible metric $g$ with $\frac{1}{4}$-pinched sectional curvature. Then the universal cover of $(M,\xi)$ is contactomorphic to $(\mathbb{S}^3,\xi_{std})$.
\end{theorem}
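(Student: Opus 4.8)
The plan is to first pin down the differential topology of the universal cover using classical Riemannian comparison, and then exploit the contact‑topological rigidity of $\mathbb{S}^3$ together with a curvature--tightness criterion for compatible metrics. First I would note that $\tfrac14$-pinched sectional curvature forces $Ricci(g)>0$: in dimension $3$, for a $g$-orthonormal frame $\{e_1,e_2,e_3\}$ one has $Ricci(g)(e_1,e_1)=Sec(e_1\wedge e_2)+Sec(e_1\wedge e_3)$, and each summand exceeds $\tfrac14 K>0$ under the pinching hypothesis. By Hamilton's theorem quoted above, the universal cover $\widetilde M$ is then diffeomorphic to $\mathbb{S}^3$. Since ``being a contact form'', ``being a compatible metric'' and ``having $\tfrac14$-pinched sectional curvature'' are all pointwise (hence local) conditions, the pullbacks $\widetilde\alpha=\pi^*\alpha$, $\widetilde\xi=\pi^*\xi$, $\widetilde g=\pi^*g$ along the covering $\pi\colon\widetilde M\to M$ inherit all of them. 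So it suffices to prove that a contact structure on $\mathbb{S}^3$ carrying a compatible metric with $\tfrac14$-pinched sectional curvature is contactomorphic to $(\mathbb{S}^3,\xi_{std})$.

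The next step is purely contact‑topological: by Eliashberg's classification of contact structures on $\mathbb{S}^3$, the standard structure $\xi_{std}$ is the unique tight one up to isotopy, while every homotopy class of $2$-plane field on $\mathbb{S}^3$ already has an overtwisted representative. Hence a contact structure on $\mathbb{S}^3$ is contactomorphic to $\xi_{std}$ if and only if it is tight, and the whole theorem reduces to showing that $\widetilde\xi$ is tight.

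This tightness statement is the essential point, and it is here that the compatible metric is used: it is the curvature--tightness theorem for compatible metric $3$-manifolds established in \cite{EKM1,JH}, to the effect that a closed contact $3$-manifold admitting a compatible metric with $\tfrac14$-pinched sectional curvature (which in particular supplies the two‑sided curvature bound needed there) is tight. I would prove it by contradiction: assume $\widetilde\xi$ is overtwisted, fix an overtwisted disk $D$ and, after a $C^0$-small isotopy, put its characteristic foliation in standard form (a single interior elliptic singularity, with $\partial D$ a closed leaf); then analyze $D$, or a more convenient representative such as an area‑minimizing one, as a Riemannian surface in $(\widetilde M,\widetilde g)$. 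The Gauss equation rewrites its intrinsic curvature in terms of the ambient sectional curvature along its tangent planes and $\det(\mathrm{II})$, and feeding this into the Gauss--Bonnet theorem — together with a Poincar\'e--Hopf count for the characteristic foliation to control the boundary and singularity terms — produces an inequality incompatible with the $\tfrac14$-pinching. I expect the first two steps to be formal bookkeeping (the classical sphere theorem in its Ricci form, plus Eliashberg's classification), whereas the real obstacle is this last step: making the Riemannian comparison estimates on a hypothetical overtwisted disk quantitatively sharp enough to contradict the pinching, and checking that the argument — designed for contact metrics — still goes through for the slightly larger class of compatible metrics.
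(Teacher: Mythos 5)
The outer reduction in your write-up (pull $\xi$, $g$ and the pinching back to the universal cover, identify the cover with $\mathbb{S}^3$ by the classical sphere theorem or Hamilton's theorem, then use Eliashberg's uniqueness of the tight contact structure on $\mathbb{S}^3$ to reduce everything to tightness) is exactly how the paper frames this statement; note, however, that the paper does not prove the theorem at all --- it is quoted as a known result of \cite{EKM1,JH}, and the remark following it records precisely this decomposition into ``classical sphere theorem'' plus ``compatibility forces the standard structure.''

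The genuine gap is in the only step that carries content: tightness of the pulled-back structure under $\tfrac14$-pinching. Citing \cite{EKM1,JH} for that step is circular here, since those are the papers whose theorem you are asked to prove; and the alternative you sketch --- take an overtwisted disk (or an area-minimizing representative), put its characteristic foliation in standard form, and derive a contradiction from the Gauss equation, Gauss--Bonnet and a Poincar\'e--Hopf count --- is not a proof and is not how either reference argues. The quantities you would need to control, namely the $\det(\mathrm{II})$ term of the disk, the geodesic curvature of its boundary, and the singularity/index contributions of the characteristic foliation (the $C^0$-small perturbation that standardizes the foliation destroys any minimality you arranged), are exactly where naive comparison arguments of this type break down; if such a direct estimate worked, it would essentially also settle the positive-curvature conjectures (for instance the $Ricci(g)>0$ conjecture stated in this very paper), which remain open. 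The actual proofs go through quantitative Darboux-type ``tightness radius'' estimates: \cite{EKM1}, using the estimates of \cite{EKM2}, shows that the contact structure is tight on geodesic balls whose radius is controlled by the curvature bounds and the rotation $\theta'$, and compares this radius with convexity/injectivity radius bounds furnished by the pinching, while \cite{JH} sharpens the comparison-geometry argument to reach the $\tfrac14$ constant. So either cite the theorem outright, as the paper does, or reproduce one of these radius-comparison arguments; the Gauss--Bonnet sketch as written does not close.
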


Note that by Eliashberg's classification of contact structures \cite{elover,elmart}, we have a $\mathbb{Z}$-family of distinct contact structures on $\mathbb{S}^3$. Therefore in the above theorem, the universal cover of $M$ being $\mathbb{S}^3$ is concluded from the classical sphere theorem and specifying the contact structure as {\ttfamily"}the standard contact structure on $\mathbb{S}^3${\ttfamily"} is the consequence of the compatibility condition. A natural generalization would be:

\begin{conjecture}
Let $(M,\xi)$ be a contact 3-manifold, equipped with a compatible metric $g$, such that $Ricci(g)>0$. Then the universal cover of $(M,\xi)$ is contactomorphic to $(\mathbb{S}^3,\xi_{std})$.
\end{conjecture}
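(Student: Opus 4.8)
The plan would be to decouple the pointwise Riemannian hypothesis from the global contact-topological conclusion, organized in two stages.

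\emph{Stage one: reduce to \(\mathbb S^3\).} Since \(Ricci(g)>0\), Hamilton's theorem \cite{hamilton} applies to the underlying Riemannian \(3\)-manifold: the normalized Ricci flow deforms \(g\) to a metric of constant positive curvature, so the universal cover \(\widetilde M\) is diffeomorphic to \(\mathbb S^3\) and \(\pi_1(M)\) acts freely on \(\mathbb S^3\) by isometries of the round metric. Let \(p\colon \widetilde M=\mathbb S^3\to M\) be the universal covering and set \(\widetilde\xi=p^*\xi\). Compatibility of a metric with a contact structure is a pointwise condition on a triple \((g,\alpha,d\alpha)\) with \(\ker\alpha=\xi\); since \(p\) is a local isometry, \(\widetilde g:=p^*g\) is compatible with \(\widetilde\xi\) and \(Ricci(\widetilde g)=p^*Ricci(g)>0\). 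Hence it suffices to prove that a compatible metric of positive Ricci curvature on a closed \(3\)-manifold forces its contact structure to be tight: applied to \((\mathbb S^3,\widetilde\xi,\widetilde g)\) this gives \(\widetilde\xi\) tight, hence contactomorphic to \(\xi_{std}\) by Eliashberg's classification of contact structures on \(\mathbb S^3\), which is exactly the conclusion of the conjecture for the universal cover.

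\emph{Stage two: extract tightness from the curvature.} This is the substantive part, and the goal is a Ricci-curvature analogue of the sectional-curvature-to-tightness results of \cite{EKM1,JH} that underlie the contact sphere theorem. One concrete line: show that a compatible metric with \(Ricci(g)>0\) implies \((M,\xi)\) is weakly symplectically fillable --- for instance by using the positivity of the curvature to control a symplectic form on a truncated symplectization \(\bigl([0,\infty)\times M,\ d(e^t\alpha)\bigr)\) and capping it --- and then invoke the Eliashberg--Gromov theorem that symplectically fillable contact \(3\)-manifolds are tight. A second, more hands-on line is convex-surface-theoretic: use the structural identities of contact metric geometry relating \(Ricci(g)\) --- in particular \(Ricci(R,\cdot)\) for the Reeb field \(R=R_\alpha\) and the curvatures of \(2\)-planes in \(\xi\) --- to the symmetric endomorphism \(h=\tfrac12\mathcal L_R J\) of \(\xi\) (cf.\ \cite{bl}) and the transverse/Webster curvature, deriving from \(Ricci>0\) a comparison estimate on embedded surfaces that is incompatible with an overtwisted disk. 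A third, most optimistic line is to run a flow that stays inside the (convex, hence contractible) space of compatible metrics of \(\xi\), preserves \(Ricci>0\), and converges to a Sasakian model, and then conclude via the structure theory of positive Sasakian \(3\)-manifolds together with the contact sphere theorem.

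The main obstacle is exactly this second stage --- bridging \(Ricci(g)>0\) to a contact-topological statement. For the \(\tfrac14\)-pinched analogue the contact sphere theorem reduces everything to the classical sphere theorem, but there is no such reduction here, since \(Ricci>0\) does not imply \(\tfrac14\)-pinched sectional curvature even on \(\mathbb S^3\) (positive-Ricci compatible metrics need not be pinched, e.g.\ Berger-type metrics for an open range of parameters), so the compatibility of \(g\) must be used in an essential way. The curvature identities do constrain the Reeb flow --- they yield pointwise bounds on \(h=\tfrac12\mathcal L_R J\), quantifying how far \(R\) is from being Killing --- but it is unclear that this dynamical control is by itself strong enough to force fillability or tightness, and no flow within the compatible class with the required convergence is presently available. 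This gap is precisely the ``contact topological data'' the paper flags as not yet understood; it stands in sharp contrast to the purely Riemannian flexibility of negative Ricci curvature (Theorem~\ref{flex}), which shows that in the positive-Ricci regime any rigidity must come from the contact side.
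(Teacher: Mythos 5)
This statement is a conjecture in the paper --- the author explicitly says it is ``currently unknown'' and offers no proof --- so the relevant question is whether your proposal actually closes it, and it does not. Stage one is fine but is the easy part: Hamilton's theorem gives $\widetilde M\cong\mathbb S^3$, pulling back $g$, $\alpha$, $J$ under the covering map preserves compatibility pointwise, and Eliashberg's classification converts ``tight on $\mathbb S^3$'' into ``contactomorphic to $(\mathbb S^3,\xi_{std})$.'' All of the content of the conjecture is therefore concentrated in your stage two, which you yourself flag as open: deriving tightness (or fillability) of $\xi$ from the existence of a compatible metric with $Ricci(g)>0$. That is precisely the contact-topological bridge that is missing in the literature and in this paper --- note that even the paper's weaker-looking Reeb-curvature conjecture, that $Ricci(X_\alpha)<\frac{\theta'^2}{2}$ everywhere forces tightness, is left open --- so no known result can be invoked to finish your argument.

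None of your three suggested lines, as stated, constitutes a step toward closing that gap. The symplectization $\bigl([0,\infty)\times M, d(e^t\alpha)\bigr)$ is symplectic for every contact form regardless of curvature, so positivity of Ricci plays no identified role there, and ``capping'' the concave end is essentially equivalent to producing a filling, which is what you are trying to prove; the Eliashberg--Gromov theorem only helps after fillability is established. The convex-surface line names the relevant tensors ($h=\tfrac12\mathcal L_{X_\alpha}J$, Webster curvature) but does not produce the comparison estimate that would exclude an overtwisted disk, and the paper's own curvature identities (Theorem~\ref{curchar}, Corollary~\ref{curvform}) only control $Ricci(X_\alpha)$, not the full Ricci tensor, and only bound it from above. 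The flow line presupposes a flow within the compatible class preserving $Ricci>0$ and converging to a Sasakian model, which does not exist in the literature. So your write-up is an accurate research program and correctly identifies where the difficulty lies, but it is not a proof, and it should not be presented as one.
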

\noindent which is currently unknown. 

For more global results, regarding curvature realization of such metrics see \cite{kroug}, about contact topology of compatible metrics with negative {\em{\ttfamily"}$\alpha$-sectional curvatures{\ttfamily"}} see \cite{Hoz}, and regarding the more restricted class of {\em{\ttfamily"}Sasakian metrics{\ttfamily"}}, positive curvature and contact topology in higher dimensions, see \cite{boy}.

Motivated by the above discussion, it is natural to study Ricci curvature realization problems in the category of contact 3-manifolds. In this paper, we study the Ricci curvature of {\em {\ttfamily"}Reeb vector fields{\ttfamily"}} (also known as {\em {\ttfamily"}characteristic vector fields{\ttfamily"}}) associated to a contact manifold. Reeb vector fields have played a central role in contact geometry, going back to its classical development, comparable to Hamiltonian vector fields in symplectic geometry. Moreover, since the early 1990s, we have learned that they can be used to extract contact topological information about the underlying contact manifold as well and by now, we have useful invariants of contact manifold, based on understanding of such dynamics (see \cite{hoferdy} for early developments). Therefore, it is natural to investigate if Ricci curvature of such vector fields contain any contact topological informations and what functions can be realized as such Ricci curvature of a given contact manifolds.

\vskip0.5cm
\begin{question}[Ricci-Reeb Realization Problem]
Given a contact manifold $(M,\xi)$, what functions can be realized as Ricci curvature of the Reeb vector field associated to a compatible metric?
\end{question}

\vskip0.5cm
First we will see that the subtlety of such realization is of global nature, since any function can be realized locally.

\begin{theorem}[Local realization] Let $(M,\xi)$ be a contact 3-manifold equipped with a compatible metric $g$ and $x\in M$ an arbitrary point, and a given function $f:M \to \mathbb{R}$. Then there exists a neighborhood $U$ containing $x$ and a compatible metric $g_*$ such that:

1) $Ricci_* (X_\alpha )(x)=f(x)$ on $U$;

2) $g=g_*$ at  $x$,

\noindent where $Ricci_* (X_\alpha )$ is the Ricci curvature of the Reeb vector field associated with $g_*$.

\end{theorem}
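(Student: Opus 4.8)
The plan is to reduce the statement to an explicit computation in a Darboux chart, and then to exploit the fact that, in the Reeb direction, $Ricci_*(X_\alpha)$ is controlled by the structure tensors of a compatible metric through a differential operator of very low order, which leaves enough room to prescribe it freely throughout a small neighborhood. The guiding principle is that a purely local statement cannot see any contact-topological obstruction, since all contact structures are locally standard; this is the conceptual reason to expect full flexibility.

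First I would invoke Darboux's theorem to pick coordinates $(x_1,x_2,t)$ near $x$ in which $\xi=\ker(dt-x_2\,dx_1)$ and $x$ is the origin. In this chart I would describe the compatible metrics for $\xi$ concretely: fixing the contact form $\alpha_0=dt-x_2\,dx_1$ makes $X_\alpha=\partial_t$ the Reeb field, and a compatible metric is then determined by a positive-definite metric on $\xi$, together with the associated almost complex structure $\phi$ and the positive rescaling function $r$ relating $d\alpha_0|_\xi$ to the $g$-area form on $\xi$. The two pieces of freedom I will actually use are $r$ and the symmetric endomorphism $h=\tfrac12\mathcal{L}_{X_\alpha}\phi$, which records how $\phi$ is distorted along the Reeb flow.

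Next I would write down the structural formula for the Reeb-direction Ricci curvature. In the normalized (contact metric) case one has $Ricci_*(X_\alpha)=2-\mathrm{tr}(h^2)$ (cf. \cite{bl}), and the crucial feature is that the right-hand side depends on the structure only through $h$, i.e. algebraically in a first-order quantity, with no second derivatives entering. Since $h$ is trace-free and anticommutes with $\phi$, in dimension $3$ its nonzero eigenvalues are $\pm\lambda$ and $\mathrm{tr}(h^2)=2\lambda^2$; hence prescribing a compatible $\phi$ whose Reeb-distortion has magnitude $\lambda=\sqrt{(2-f)/2}$ at each point already realizes every $f\le 2$. To reach larger values I would pass to the genuinely compatible (non-normalized) case, whose formula adds a term increasing in $r$ (of order $r^2$) together with lower-order terms in the derivatives of $r$; letting $r$ grow removes the upper bound. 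Combining the two sign-definite knobs, I would fix a baseline with $r$ and then solve $2\lambda^2=(\text{baseline})-f$ pointwise, arranging the right-hand side to be nonnegative on all of $U$.

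The construction is then carried out by prescribing the metric data directly: choose $r$ and the required eigenvalue function $\lambda$ on the chart, and realize the corresponding $h$ by integrating the first-order relation $\partial_t\phi=2h$ in the $t$-variable, with initial data at the origin equal to that of the given $g$. This secures $g_*=g$ at $x$, a zeroth-order condition that does not constrain the first jet governing $Ricci_*(X_\alpha)$, so there is no conflict with requiring $Ricci_*(X_\alpha)=f$ throughout $U$. The step I expect to be the main obstacle is the bookkeeping of the first-order correction terms accompanying $r$, together with the requirement that the integrated data remain an honest compatible metric, positive-definite with $r>0$ and $\phi^2=-\mathrm{Id}$ on $\xi$; the last condition holds automatically for short time because $h$ anticommutes with $\phi$, and I would absorb the $r$-corrections into the freely chosen $\lambda$ and shrink $U$, using that compatibility is an open condition satisfied at $x$ by $g$ and hence persists on a sufficiently small neighborhood.
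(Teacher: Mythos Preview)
Your overall strategy---pass to a Darboux chart, use the algebraic identity $Ricci(X_\alpha)=\tfrac{\theta'^2}{2}-\mathrm{tr}(h^2)$ with $h=\tfrac12\mathcal{L}_{X_\alpha}J$, and integrate a first-order condition along the Reeb direction---matches the mechanism of the paper's proof. But there is a genuine gap in how you handle the upper bound.

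You describe ``the positive rescaling function $r$ relating $d\alpha_0|_\xi$ to the $g$-area form'' and then invoke ``lower-order terms in the derivatives of $r$'' to lift the ceiling on $Ricci(X_\alpha)$. In this paper the instantaneous rotation $\theta'$ is by definition a \emph{constant} (see the definition of compatible metric and Remark~\ref{instan}); there are no derivative terms, and Corollary~\ref{curvform} gives the unconditional pointwise bound $Ricci(X_\alpha)\le\tfrac{\theta'^2}{2}$ for every compatible metric. The precise local realization statement (Theorem~\ref{localreal}) accordingly carries the hypothesis $f\le\tfrac{\theta'^2}{2}$, and the paper keeps $\alpha$ and $\theta'$ fixed throughout, varying only $J$. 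Your attempt to exceed the bound by letting $r$ vary as a function is based on a misreading of the compatibility condition; and even raising $\theta'$ to a larger constant would change $\alpha$ and hence the Reeb field, forcing you to redo the whole curvature computation, which you do not address.

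A second, smaller issue is the circularity in your integration step. You want to prescribe $h$ with $\mathrm{tr}(h^2)=\tfrac{\theta'^2}{2}-f$ and integrate $\partial_t\phi=2h$, noting that anticommutation of $h$ with $\phi$ preserves $\phi^2=-\mathrm{Id}$. That is correct, but the anticommutation must hold with the \emph{evolving} $\phi(t)$, so $h$ cannot be chosen in advance independently of $\phi$. The paper avoids this by parametrizing the complex structure explicitly as $J_*:e\mapsto \eta^2 Je+\lambda e$ on a local line field $\langle e\rangle\subset\xi$, which is automatically an almost complex structure for any smooth $(\eta,\lambda)$ with $\eta>0$; the equation $Ricci_*(X_\alpha)=f$ then becomes a concrete coupled pair of ODEs for $(\eta,\mu)$ (with $\mu=\lambda/\eta^2$) along $X_\alpha$, solved from a transverse slice $\Sigma_0\ni x$ with initial data $\eta=1$, $\mu=0$, yielding $g_*=g$ on all of $\Sigma_0$ rather than just at $x$.
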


In attempt to extend such solution to a global one, we will use the topological tool of {\em {\ttfamily"}open book decompositions{\ttfamily"}}, which has been widely used in contact topology since the establishment of {\em{\ttfamily"}Giroux's correspondence{\ttfamily"}} between such structures and contact structures in 2000 \cite{giroux}. This method will yield an {\ttfamily"}almost global{\ttfamily"} realization, reducing the pursuit of a global solution to resolving a codimension one embedded submanifold of singularities.

\begin{theorem}[Almost global realization]\label{almostintro}
Let $(M,\xi)$ be a closed oriented contact 3-manifold, $ f(x):M \to \mathbb{R}$ a function on $M$ and $V$ a positive real number. Then there exists a singular metric $g_{\infty}$ and an embedded compact surface with boundary $F \subset M$ such that:

1) $g_\infty$ is a compatible metric on $M \backslash F$;

2) $Ricci(X_\alpha) (x)=f(x)$ on $M \backslash F$, where $X_\alpha$ is the Reeb vector field associated with $g_\infty$;

3) $Vol(g_\infty )=V$;

4) $g_\infty$ can be realized as an element of the completion of the space of compatible Riemannian metrics $\overline{\mathcal{M}_\xi} \subset \overline{\mathcal{M}}$. More precisely, given any $\epsilon >0$,  $[g_\infty]$ is the limit of a $L^2$-Cauchy sequence of compatible metrics $\{ g_n \} \rightarrow [g_\infty]\in \overline{\mathcal{M_\xi}} \subset \overline{\mathcal{M}}\simeq\mathcal{M}_f / \sim$, such that $g_n$ realizes the given function $f(x)$ as $Ricci(X_\alpha)$, outside a $\frac{\epsilon}{2^n}$-neighborhood of $F$.

\end{theorem}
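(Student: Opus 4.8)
The plan is to use an open book decomposition to reduce the global realization problem to a realization problem on a product, where it becomes an ODE along the Reeb flow, and then to control the unavoidable failure to close up smoothly by an $L^2$-collar estimate. Concretely: by Giroux's correspondence \cite{giroux}, fix an open book decomposition $(B,\pi)$ supporting $\xi$, with binding $B$ and page $\Sigma$, and let $F\subset M$ be the closure of a single page $\pi^{-1}(\theta_0)$; this is an embedded compact surface with $\partial F=B$, and $W:=M\setminus F$ is diffeomorphic to $\mathrm{int}(\Sigma)\times(0,1)$. Choosing a Giroux-adapted contact form whose Reeb vector field is positively transverse to the pages of $M\setminus B$, the Reeb flow on $W$ is a reparametrized suspension of the first-return map on $\mathrm{int}(\Sigma)$, so its orbits foliate $W$ as the fibers of a trivial interval bundle, with each orbit limiting to $F$ from both ends.

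On $W$, in Reeb-adapted coordinates $(s,x^1,x^2)$ with $X_\alpha=\partial_s$, a compatible metric is determined by the contact conformal factor and the transverse metric, with compatibility pinning the area-form relation between the latter and $d\alpha|_\xi$; the remaining free data are, roughly, the transverse metric and the return-time function. Invoking the Bochner/Raychaudhuri-type expression for $Ricci(X_\alpha)$ along the geodesic field $X_\alpha$ — the negative $\partial_s$-derivative of the expansion of the Reeb flow, minus quadratic shear and expansion terms, plus a universal contribution, as recorded by the contact-metric structure equations — the equation $Ricci(X_\alpha)=f$ becomes, along each orbit, a Riccati (equivalently second-order linear) ODE in $s$ with right-hand side $f$, hence solvable for \emph{any} $f$ once initial data are fixed on one end. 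Prescribing those initial data by a fixed compatible metric near $F$ and propagating produces an honest compatible metric $g_\infty$ on $W$ with $Ricci(X_\alpha)=f$ there, giving (1) and (2); the residual freedom (the length of the transverse parameter, the total mass of $d\alpha|_\xi$) can be adjusted by an intermediate-value argument to arrange $Vol(g_\infty)=V$, giving (3). That $g_\infty$ genuinely fails to extend across $F$ — because the two ends of a generic Reeb orbit are identified in $M$ through the open book monodromy and the collapse of the page near $B$, while the ODE solution has no reason, and in general an index/monodromy obstruction, to meet those matching conditions — is precisely the codimension-one wall of singularities; which $(M,\xi)$ and $f$ permit its removal is the contact-topological question left open.

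For (4), fix $\epsilon>0$. For each $n$, let $N_n$ be the $\epsilon/2^{n}$-neighborhood of $F$ and set $g_n:=g_\infty$ on $M\setminus N_n$, while on $\overline{N_n}$ let $g_n$ be any smooth compatible metric agreeing with $g_\infty$ along $\partial N_n$ and extending across $F$ — this exists because the compatible data (a $d\alpha$-compatible complex structure on $\xi$ together with a positive contact conformal factor) are sections of a bundle with contractible fibers. Then $g_n\in\mathcal{M}_\xi$ and realizes $f$ as $Ricci(X_\alpha)$ on $M\setminus N_n$; renormalizing to volume $V$ preserves these properties. Since $g_n$ and $g_m$ agree off $N_{\min(n,m)}$, a straight-line path between them is supported there, so $d_{L^2}(g_n,g_m)$ is controlled by the $L^2$-mass of the metrics over $N_{\min(n,m)}$, which tends to $0$ as $\min(n,m)\to\infty$ once one knows $g_\infty$ is $L^2$ near $F$; thus $\{g_n\}$ is $L^2$-Cauchy, its limit is the a.e.-defined class $[g_\infty]\in\overline{\mathcal{M}}\simeq\mathcal{M}_f/\!\sim$, and as each $g_n\in\mathcal{M}_\xi$ the limit lies in $\overline{\mathcal{M}_\xi}$.

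The crux is the middle step: verifying that, under a workable ansatz, $Ricci(X_\alpha)=f$ really does reduce to an ODE along the Reeb flow and that its solution can be driven to an arbitrary target while \emph{remaining compatible} on all of $W$ (positivity of $d\alpha$ on $\xi$, unit Reeb length, the area-form relation), which requires control on the sign and growth of the shear and expansion along long orbits — and, relatedly, confirming that the resulting incompatibility concentrates on $F$ and nowhere else. The secondary technical point is the near-$F$ analysis needed to make $g_\infty$ an actual point of the completion $\overline{\mathcal{M}_\xi}$, i.e. of finite $L^2$-distance to honest compatible metrics, rather than a mere pointwise limit.
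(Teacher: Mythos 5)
Your overall architecture does coincide with the paper's: take an open book adapted to $\xi$, let $F$ be the closure of one page, solve the realization problem on $M\setminus F\simeq \mathrm{int}(\Sigma)\times(0,1)$ by integrating along the Reeb flow from an initial page, and then produce honest compatible metrics agreeing with $g_\infty$ away from shrinking neighborhoods of $F$ and converging in Clarke's $L^2$-completion. But the step you yourself label ``the crux'' --- that $Ricci(X_\alpha)=f$ reduces to an ODE along Reeb orbits whose solution can be driven to the target \emph{while remaining compatible} on all of $M\setminus F$ --- is precisely what you do not supply, and your sketch of it is incorrect as stated: by Corollary~\ref{curvform} every compatible metric with instantaneous rotation $\theta'$ has $Ricci(X_\alpha)\le \theta'^2/2$, so your claim that the Riccati equation is ``solvable for \emph{any} $f$'' cannot be right; the bound $f\le\theta'^2/2$ must be imposed (it enters the paper's ODE through the term $\sqrt{\theta'^2/4-f/2}$). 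The paper resolves the crux with a specific ansatz you leave open: fix $\alpha$ and $\theta'$ and perturb only the complex structure, $J_*(e)=\eta^2 Je+\lambda e$ along a trivializing line field of $\xi$ over $M\setminus F$ (Lemma~\ref{pert}); compatibility is then automatic, $\eta=\exp\bigl(\int \hat{P}_*\bigr)$ stays positive, and the single remaining unknown $\mu=\lambda/\eta^2$ satisfies a first-order ODE solved page-by-page exactly as in Theorem~\ref{localreal}. Your looser ansatz (``transverse metric and return-time function'') would change $\alpha$, hence $X_\alpha$ and the volume form, and you give no argument that the Reeb field, the compatibility identities, or the volume survive along long orbits.

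Two further steps do not work as written. For (3) you defer to an intermediate-value/renormalization argument at the end, but rescaling a compatible metric changes $Ricci(X_\alpha)$ pointwise, so ``renormalizing to volume $V$ preserves these properties'' is false; the paper instead scales $\alpha$ once at the outset so that $\frac{1}{\theta'}\alpha\wedge d\alpha$ has total volume $V$ and then never alters the volume form again, since only $J$ is perturbed. This choice also makes (4) clean: all approximating metrics share one volume form, so Proposition~\ref{cool} bounds $d(g_n,g_m)$ by the square root of the volume of the small collar on which they differ --- no hypothesis that ``$g_\infty$ is $L^2$ near $F$'' and no control of the volumes of your arbitrary extensions over $N_{\min(m,n)}$ is needed, whereas your version quietly requires both. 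Finally, a compatible extension over $N_n$ that merely ``agrees with $g_\infty$ along $\partial N_n$'' need not be smooth across $\partial N_n$; one needs a cutoff interpolation of the complex structures in a collar of the last pages, plus a second radial interpolation near the binding $B$, which is exactly what the paper's $h_\delta$-construction does. These points are repairable, but as written the proposal leaves the main analytic content of the theorem unproved.
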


Here, we note that for any compatible metric with {\em {\ttfamily"}instantaneous rotation{\ttfamily"}} $\theta'$ (see Remark~\ref{instan}), we have $Ricci(X_\alpha )\leq \frac{\theta'^2}{2}$ (see Corollary~\ref{curvform}). Therefore in the above theorems, we need to choose the constant $\theta'$ such that $f(x)\leq \frac{\theta'^2}{2}$ (note that $M$ is compact). On the other hand, for a fixed $\theta'$, these theorems hold for any function, respecting such upper bound.

As we will learn about the geometric meaning of such Ricci curvature attaining its maximum (see Proposition~\ref{riccimaxmeaning}), we recognize that the dichotomy of achieving such maximum or not seems to be of central importance for complete understanding of the Ricci-Reeb realization problem. In particular, when considered globally, the dichotomy will result in topological obstructions to realization of a function as $Ricci(X_\alpha)$, showing that the resolution of the singularity set in Theorem~\ref{almostintro} depends on topological data (see Theorem~\ref{topob}).

Using previous works of \cite{AlReg,Ruk,Massot,HT}, we will see that forcing $Ricci(X_\alpha)=\frac{\theta'^2}{2}$ everywhere has strong rigidity consequences for the underlying contact manifold.

  \begin{theorem}
 Let $(M,\xi)$ be a closed contact 3-manifold and $g$ a compatible Riemannian metric with $Ricci(X_\alpha)=\frac{\theta '^2}{2}$ everywhere, where $\theta'$ is the instantaneous rotation of $g$. Then $(M,\xi)$ is finitely covered by Boothby-Wang fibration with $\xi$ being a tight symplectically fillable contact structure. Moreover, if all the periodic Reeb orbits associated with $g$ are non-degenerate, then $(M,\xi)$ is finitely covered by 3-sphere with the standard tight contact structure.

\end{theorem}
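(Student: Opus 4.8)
The plan is to first convert the everywhere-equality hypothesis into a rigidity statement about the Reeb field, and then feed that into the structure theory of Sasakian $3$-manifolds. By Corollary~\ref{curvform} the bound $Ricci(X_\alpha)\leq\frac{\theta'^2}{2}$ holds pointwise, with the defect measured by a nonnegative quantity built from $\mathcal{L}_{X_\alpha}g$; I expect Proposition~\ref{riccimaxmeaning} to say exactly that this defect vanishes at a point iff $X_\alpha$ is infinitesimally Killing there, so that equality everywhere forces $X_\alpha$ to be a Killing field for $g$. In dimension $3$ a compatible metric whose Reeb field is Killing is Sasakian ($K$-contact and Sasakian coincide here), $\theta'$ is a nonzero constant (since $d\alpha$ is nondegenerate on $\xi$), and $Ricci(X_\alpha)=\frac{\theta'^2}{2}>0$.

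Next I would invoke the structure theory of closed Sasakian $3$-manifolds \cite{AlReg,Ruk}. Since $X_\alpha$ is Killing, the closure of its flow in $\mathrm{Isom}(M,g)$ is a torus $T^k$; the case $k=3$ is excluded, as $M$ would then be a flat $3$-torus, contradicting $\theta'\neq 0$. If $k=1$ the Reeb flow generates a locally free circle action, exhibiting $(M,\xi)$ as a quasi-regular Sasakian manifold, i.e. a Seifert fibration over a closed oriented $2$-orbifold with nonzero orbifold Euler number and $\xi$ transverse to the fibers; if $k=2$ one reduces to this case by deforming within the Reeb cone of the Sasakian structure \cite{AlReg,Ruk}. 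Passing to a finite cover that unwinds the exceptional fibers, one obtains an honest principal $S^1$-bundle over a closed oriented surface $\Sigma$ whose Euler class is a nonzero multiple of a symplectic area class $[\omega]$ — a Boothby-Wang fibration — with $\xi$ its natural transverse contact structure. Its associated disk bundle carries a symplectic form restricting to $\alpha$ on the boundary, so this cover is strongly symplectically fillable, hence tight by Gromov-Eliashberg; an overtwisted disk would lift to any cover, so $(M,\xi)$ is tight, and an orbifold resolution of the disk bundle (or the Sasakian cone) supplies a symplectic filling of $(M,\xi)$ itself, as in \cite{Massot}.

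For the ``moreover'', suppose in addition that every periodic Reeb orbit is non-degenerate. This rules out the quasi-regular case: there the regular Reeb orbits form a $2$-parameter family parametrized by the regular part of the orbifold base, so they are not isolated and hence not non-degenerate. Therefore $k=2$ and $M$ admits an effective $T^2$-action whose generic orbits are the closures of Reeb orbits. Among the three model geometries realized by closed Sasakian $3$-manifolds — $\mathbb{S}^3$, $\widetilde{SL_2}$, and $\mathrm{Nil}$ — the $\mathrm{Nil}$ and $\widetilde{SL_2}$ manifolds are Seifert fibered over Euclidean, respectively hyperbolic, $2$-orbifolds with nonzero Euler number and admit no effective $T^2$-action, so $M$ carries the $\mathbb{S}^3$-geometry: $M=\mathbb{S}^3/\Gamma$ for a finite $\Gamma\subset SU(2)$ acting freely. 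Its Sasakian structure is the quotient of a (weighted) Sasakian structure on $\mathbb{S}^3$, whose underlying contact structure is $\xi_{std}$; together with the analysis of \cite{HT} for non-degenerate Reeb flows with finitely many closed orbits, this identifies $(M,\xi)$ as finitely covered by $(\mathbb{S}^3,\xi_{std})$.

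The main obstacle I anticipate is not a single hard estimate but the bookkeeping around the classification input: pinning down the precise content of Proposition~\ref{riccimaxmeaning} so as to extract ``$X_\alpha$ Killing'' from the equality, handling the $k=2$ case so that one genuinely lands on a Boothby-Wang fibration after a finite cover, and producing a symplectic filling of $(M,\xi)$ itself (not merely of a cover) from the orbifold disk bundle. In the non-degenerate part, the delicate point is excluding the $\mathrm{Nil}$ and $\widetilde{SL_2}$ geometries via the absence of $T^2$-symmetry and then reading off that the residual spherical case is indeed covered by the standard tight $(\mathbb{S}^3,\xi_{std})$; the cited works \cite{AlReg,Ruk,Massot,HT} should supply the structure theory and rigidity once the Killing/Sasakian reduction is in hand.
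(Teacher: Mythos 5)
Your proposal is essentially correct, and its first half follows the same route as the paper: equality everywhere plus Proposition~\ref{riccimaxmeaning}/Corollary~\ref{cormax} gives $\mathcal{L}_{X_\alpha}g=0$ (K-contact/geodesible), and then the structure theory of K-contact $3$-manifolds — Rukimbira's approximation by almost regular structures \cite{Ruk}, the resulting Seifert (generalized Boothby--Wang) fibration \cite{AlReg}, and fillability/tightness of the transverse contact structure \cite{Nied,Massot} — yields the Boothby--Wang cover; your torus-closure argument is just an unpacking of the same input. Where you genuinely diverge is the ``moreover'' part. The paper's argument is dynamical and short: since the Reeb flow is isometric and preserves $\xi$, the linearized return map of every closed orbit lies in $SO(2)$, so all periodic orbits are elliptic; combined with non-degeneracy, Hutchings--Taubes \cite{HT} immediately gives that $(M,\xi)$ is $(\mathbb{S}^3,\xi_{std})$ or a lens space with universally tight structure, hence finitely covered by $(\mathbb{S}^3,\xi_{std})$. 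You instead rule out the quasi-regular case (regular fibers have identity return map, hence are degenerate), pass to rank two, and exclude the $\mathrm{Nil}$ and $\widetilde{SL_2}$ geometries by the absence of effective $T^2$-actions, landing on a spherical space form. This works, but it trades one clean theorem for several classification inputs you only assert: the geometrization of closed Sasakian $3$-manifolds, the Conner--Raymond-type bound ruling out $T^2$-actions on those aspherical Seifert manifolds, and the identification of the lifted contact structure on $\mathbb{S}^3$ as $\xi_{std}$ (easiest via fillability of the lift plus Eliashberg's uniqueness of the tight structure on $\mathbb{S}^3$ \cite{elover}, rather than via ``weighted Sasakian'' structures). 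Notably, you never use the ellipticity observation, which is precisely the paper's key point making \cite{HT} applicable; if you keep your route, supply the missing citations for the torus-action exclusion and for pinning down the standard structure on the cover.
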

  
On the other hand, we can easily find topological obstructions for the extreme opposite case of  nowhere attaining such maximum, i.e. admitting a {\em {\ttfamily"}nowhere Reeb-invariant{\ttfamily"}} compatible metric, strengthening a theorem of Krouglov \cite{kroug}.
 
\begin{theorem}
Let $(M,\xi)$ be any contact 3-manifold with $2e(\xi) \in H^2 (M) \neq 0$. Then for any compatible metric $g$ with instantaneous rotation $\theta'$, there exists some point $x\in M$ at which $Ricci(X_\alpha)(x) = \frac{\theta'^2}{2}$, where $X_\alpha$ is the Reeb vector field associated with $g$.
\end{theorem}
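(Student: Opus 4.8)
The plan is to combine the pointwise bound $Ricci(X_\alpha)\leq\frac{\theta'^2}{2}$ from Corollary~\ref{curvform} with the geometric characterization of its equality case from Proposition~\ref{riccimaxmeaning}, and then turn the failure of equality into a nonvanishing section of a line bundle whose first Chern class is $2e(\xi)$. First I would recall that, because $g$ is compatible, the Reeb flow preserves $\alpha$, $X_\alpha$, and the area form on $\xi$ induced by $g$; hence the only first-order datum of $\mathcal{L}_{X_\alpha}g$ along $\xi$ is a $g$-symmetric \emph{traceless} endomorphism $\tau$ of $\xi$, namely the ``shear'' part of the infinitesimal Reeb action on the contact planes (the skew part being the instantaneous rotation $\theta'$). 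By Proposition~\ref{riccimaxmeaning}, the set $\{x: Ricci(X_\alpha)(x)=\tfrac{\theta'^2}{2}\}$ is precisely the zero set of $\tau$, i.e.\ the locus where $g$ is Reeb-invariant. Thus the theorem reduces to: \emph{the section $\tau$ of the bundle $S\to M$ of $g$-symmetric traceless endomorphisms of $\xi$ must vanish somewhere.}

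Next I would identify the bundle $S$. Using the complex structure $J$ on $\xi$ determined by $g$ and the co-orientation, a $g$-symmetric traceless endomorphism of $\xi$ is exactly a complex-antilinear endomorphism of the complex line $(\xi,J)$; equivalently, in a local unitary frame the structure group $SO(2)$ acts on the fibers of $S$ through its weight-$2$ representation. Hence $S\cong \overline{\xi}^{\,*}\otimes\xi$ as complex line bundles, so that $c_1(S)=2e(\xi)\in H^2(M)$.

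The obstruction argument is then immediate: if $\tau$ were nowhere zero it would trivialize $S$, forcing $2e(\xi)=c_1(S)=0$ in $H^2(M)$, contrary to hypothesis. Therefore $\tau(x)=0$ for some $x\in M$, and by the characterization above $Ricci(X_\alpha)(x)=\tfrac{\theta'^2}{2}$. The improvement over Krouglov's statement falls out for free, since we only needed $2e(\xi)\neq 0$ rather than any stronger condition, and we imposed nothing on $g$ beyond compatibility.

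The main obstacle I anticipate is bookkeeping rather than depth: I must pin down exactly which tensor governs the equality case in Proposition~\ref{riccimaxmeaning} — the $g$-symmetric traceless part of $\nabla X_\alpha|_\xi$, equivalently of $\tfrac12(\mathcal{L}_{X_\alpha}g)|_\xi$, equivalently the relevant contact-metric torsion restricted to $\xi$ — and then verify carefully that it is a genuine section of $S$ with the stated complex structure, so that the Euler-class computation produces $2e(\xi)$ and not $e(\xi)$ or $0$. Once ``the shear lives in a line bundle with first Chern class $2e(\xi)$'' is nailed down precisely, the remainder is the standard ``a nonvanishing section trivializes a line bundle'' argument.
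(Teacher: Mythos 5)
Your proof is correct and takes essentially the same route as the paper: both reduce the statement, via Proposition~\ref{riccimaxmeaning}, to showing that the ``shear'' (the traceless symmetric part of the infinitesimal Reeb action on $\xi$, equivalently $\mathcal{L}_{X_\alpha}J$) cannot be nowhere vanishing when $2e(\xi)\neq 0$. The only difference is packaging: the paper extracts from the nonvanishing shear the positive eigendirection, a global line field in $\xi$, and cites the criterion that such a line field exists iff $2e(\xi)=0$, whereas you keep the shear itself as a nowhere-zero section of the weight-two complex line bundle (isomorphic to $\xi^{\otimes 2}$, with $c_1=2e(\xi)$) and conclude directly --- the same obstruction either way.
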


Note that this also means that the analogue of Lokhamp's flexibility theorem, Theorem~\ref{flex}, does not hold in this category.

We will also observe that as long as $(M,\xi)$ admits a compatible metric with $Ricci(X_\alpha)(x) < \frac{\theta'^2}{2}$, we can find a compatible metric for which $Ricci(X_\alpha)$ is arbitrary far from the maximum, confirming the observation that the described dichotomy is of primary importance, compared to other natural dichotomies like $Ricci(X_\alpha)$ being positive versus negative (however, for a survey on the known results concerning the sign of curvature and contact metric geometry see \cite{blsign}):

\begin{theorem} 
Assume $(M,\xi)$ admits some compatible metric with instantaneous rotation $\theta'$ and $Ricci(X_\alpha)<\frac{\theta'^2}{2}$ everywhere. Then for any $c \leq \frac{\theta'^2}{2}$, there exists some compatible metric with instantaneous rotation $\theta'$ and $Ricci(X_\alpha)<c$.

\end{theorem}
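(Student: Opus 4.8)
My plan is to exploit the rescaling freedom one has after fixing the instantaneous rotation $\theta'$, together with the cutoff/gluing machinery already developed in the proofs of the local and almost-global realization theorems. The starting point is the curvature formula of Corollary~\ref{curvform}, which expresses $Ricci(X_\alpha)$ in terms of $\theta'$ and the derivative of the "instantaneous rotation" along the Reeb flow (equivalently, in terms of how far the metric is from being Reeb-invariant at each point); the hypothesis gives us a compatible metric $g_0$ with instantaneous rotation $\theta'$ and $Ricci_{g_0}(X_\alpha) = \frac{\theta'^2}{2} - \rho$ for a strictly positive function $\rho$ on the (compact) $M$, so $\rho \geq \rho_{\min} > 0$. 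The goal is, for a prescribed $c \le \frac{\theta'^2}{2}$, to produce a new compatible metric $g$ with the \emph{same} instantaneous rotation $\theta'$ but with the defect $\frac{\theta'^2}{2} - Ricci_g(X_\alpha)$ everywhere larger than $\frac{\theta'^2}{2} - c$.

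The key step is an "amplification" move: starting from $g_0$, I would locally modify the metric on a Darboux-type chart (or on the pages of an open book as in Theorem~\ref{almostintro}) so as to increase the magnitude of the Reeb-transverse shearing term that controls $Ricci(X_\alpha)$, while keeping $\alpha$ (hence $X_\alpha$ and $\theta'$) fixed. Concretely, the compatible metrics with fixed contact form and fixed $\theta'$ form an affine-type family parametrized by the choice of a complex structure / symmetric endomorphism on $\xi$ together with the "rotational speed" data; within that family one can boost the term responsible for the negative contribution to $Ricci(X_\alpha)$. Doing this on a single chart drops $Ricci(X_\alpha)$ below $c$ on that chart; then I iterate over a finite cover of $M$ by such charts, at each stage using a partition-of-unity / cutoff interpolation (exactly the kind used in the local realization theorem) to glue the amplified metric to the previous one without destroying compatibility or changing $\theta'$, and arranging that the earlier amplifications are not undone — e.g. by making each successive modification's perturbation of the already-treated regions small compared to the slack built up there. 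Compactness of $M$ guarantees the process terminates, yielding $Ricci_g(X_\alpha) < c$ everywhere.

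The main obstacle I anticipate is the \emph{simultaneous} control of two constraints during the gluing: (i) staying inside the space of genuine compatible metrics (positivity of the metric, the almost-complex-structure compatibility on $\xi$, and the normalization tying the transverse metric to $d\alpha$), and (ii) keeping the instantaneous rotation pinned at the constant $\theta'$ rather than letting it drift — since $\theta'$ enters the upper bound $\frac{\theta'^2}{2}$, any drift would move the target. In the interpolation regions both conditions couple, and one must check that the cutoff construction can be carried out with the cutoff scale chosen small enough (relative to $\rho_{\min}$ and to $C^0$-bounds on the data) that the error terms introduced do not violate positivity or push $Ricci(X_\alpha)$ back up past $c$ on the overlaps. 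A secondary point to handle carefully is the case $c$ very negative (or $c$ arbitrarily far below $\tfrac{\theta'^2}{2}$): one should confirm the amplification can be made of unbounded strength, i.e. that the shearing term controlling $Ricci(X_\alpha)$ is genuinely unconstrained from below within the $\theta'$-fixed family, which again follows from the explicit form of Corollary~\ref{curvform}. Once those checks are in place, the theorem follows by the finite iteration sketched above.
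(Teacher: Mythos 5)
There is a genuine gap, and it sits exactly at the step you defer: the chart-by-chart ``amplification plus cutoff'' scheme has no mechanism to keep $Ricci(X_\alpha)$ below $c$ on the interpolation regions. In the $\theta'$-fixed family the Ricci curvature has the form of Lemma~\ref{pert}: $Ricci_*(X_\alpha)=\frac{\theta'^2}{2}-2\bigl(P_*+X_\alpha\ln\eta\bigr)^2-2\bigl(Q_*-\frac{\lambda}{2\eta}X_\alpha\eta+\frac{\eta}{2}X_\alpha(\frac{\lambda}{\eta})\bigr)^2$, so where a cutoff makes the perturbation data vary along the Reeb direction, the derivative terms can \emph{cancel} the background terms and drive $Ricci(X_\alpha)$ back up toward its maximum $\frac{\theta'^2}{2}>c$; the danger is not that the error is ``large'' but that it is of the wrong sign inside the squares, and shrinking the cutoff scale makes the derivative terms larger, not smaller, so your proposed remedy (choose the cutoff scale small relative to $\rho_{\min}$) points the wrong way. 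Moreover, when $c$ is very negative the amplification must be large, which makes the overlap cancellation problem worse, and ``making each successive modification small on already-treated regions'' is incompatible with that. This overlap problem is precisely the singularity-resolution issue that the paper isolates in the almost-global theorem and shows to be tied to topology, so it cannot be waved through by a partition-of-unity argument.

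The paper's proof avoids gluing entirely by extracting a topological consequence of the hypothesis that your proposal never uses: since $Ricci(X_\alpha)<\frac{\theta'^2}{2}$ everywhere, the metric is nowhere Reeb-invariant, and by Remark~\ref{massive} the sign regions of $g(e,\nabla_e X_\alpha)$ give a globally defined line field $\langle e\rangle\subset\xi$, i.e.\ $2e(\xi)=0$. One then perturbs the complex structure \emph{globally} along this line field by a single constant, $J_\lambda : e\mapsto Je+\lambda e$ (so $\eta\equiv 1$, $X_\alpha\lambda=0$, and no derivative terms appear), and Lemma~\ref{pert} shows that at each point $Ricci_\lambda(X_\alpha)$ is a nonconstant even-degree polynomial in $\lambda$ with negative leading behavior; hence for $\lambda$ large it drops below $c$ pointwise, and compactness of $M$ yields one constant $\lambda$ that works everywhere. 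If you want to rescue your argument, the missing ideas are exactly these two: (i) the global section of $\xi$ furnished by the hypothesis, which removes any need for charts, cutoffs, or an iteration, and (ii) taking the perturbation parameter constant along the Reeb flow so that the formula of Lemma~\ref{pert} (rather than Corollary~\ref{curvform} alone) gives unboundedness of the defect without introducing the $X_\alpha$-derivative terms that your gluing would have to control.
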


It is worth mentioning that we can establish existence of such metric, based on the dynamical assumption of {\em {\ttfamily"}conformal Anosovity{\ttfamily"}} of a contact manifold, i.e. when $(M,\xi)$ admits a conformally Anosov Reeb vector field. Such class of flows were introduced by Eliashberg and Thurston \cite{confoliations} and Mitsumatsu \cite{Mitsumatsu} in mid 1990s and has showed up naturally in the study of Riemannian geometry of contact manifolds by Blair and Perrone \cite{blperr,perr}. We have studied such flows in the category of three dimensional contact topology in \cite{Hoz}.

\begin{theorem}
Let $(M,\xi)$ be a conformally Anosov contact 3-manifold. Then $\xi$ admits a Reeb vector field and a complex structure $J$, satisfying
$$\mathcal{L}_{X_\alpha} J \neq 0$$
everywhere, or equivalently, $(M,\xi)$ admits a compatible metric with instantaneous rotation $\theta'$ and
$$Ricci(X_\alpha)<\frac{\theta'^2}{2}$$
everywhere.
\end{theorem}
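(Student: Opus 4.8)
The plan is to translate the dynamical hypothesis of conformal Anosovity into the precise pointwise inequality $\mathcal{L}_{X_\alpha}J\neq 0$, and then to read off the Ricci inequality from the curvature formula behind Corollary~\ref{curvform}. Recall that a conformally Anosov Reeb flow comes with a continuous splitting $\xi = E^s \oplus E^u$ into (weak) stable and unstable line fields that are exponentially contracted/expanded up to a conformal factor; equivalently, by the Eliashberg--Thurston and Mitsumatsu picture, there is a quadratic form on $\xi$ (a "Mitsumatsu pair" of transverse foliations) whose Lie derivative along the flow is definite. The first step is to promote these \emph{a priori} only continuous line fields to a genuine smooth complex structure $J$ on $\xi$: I would take the two line fields, use them to build a smooth field of indefinite forms (or, after averaging/smoothing, a smooth almost-complex structure $J$ rotating one line field toward the other) compatible with $d\alpha$ after possibly rescaling the contact form, so that $(\alpha, J)$ gives a compatible metric $g$ in the sense of Section~\ref{3}. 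The point of conformal Anosovity is exactly that the "rotation" of $\xi$ under the Reeb flow is nowhere stationary, i.e. the flow genuinely turns $E^s$ and $E^u$ into one another, which is the geometric content of $\mathcal{L}_{X_\alpha}J\neq 0$ everywhere; the smoothing must be done carefully enough to preserve this strict non-vanishing, which is an open condition.

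The second step is the equivalence, stated in the theorem, between $\mathcal{L}_{X_\alpha}J\neq 0$ everywhere and $Ricci(X_\alpha)<\frac{\theta'^2}{2}$ everywhere. This should be a pointwise identity: by Corollary~\ref{curvform}, $Ricci(X_\alpha) = \frac{\theta'^2}{2} - (\text{a nonnegative quantity built from } \mathcal{L}_{X_\alpha}J)$, so that equality $Ricci(X_\alpha)(x)=\frac{\theta'^2}{2}$ holds iff $(\mathcal{L}_{X_\alpha}J)(x)=0$ (this is presumably the content of Proposition~\ref{riccimaxmeaning}, that attaining the maximal Ricci value is the same as the metric being Reeb-invariant / $J$ being preserved at that point). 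Granting this, the compatible metric $g$ produced in step one automatically has $Ricci(X_\alpha)<\frac{\theta'^2}{2}$ at every point, with instantaneous rotation $\theta'$ (constant, after normalizing the conformal factor). So the two displayed conclusions are literally the same statement read through the curvature formula, and the whole theorem reduces to step one.

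The main obstacle I expect is precisely the regularity/non-degeneracy issue in step one: the Anosov splitting $E^s\oplus E^u$ is in general only Hölder continuous, not smooth, while a compatible metric requires a smooth $J$. One must smooth the line fields while keeping (i) compatibility with (a rescaled) $d\alpha$ and (ii) the strict inequality $\mathcal{L}_{X_\alpha}J\neq 0$ everywhere. Since "$\mathcal{L}_{X_\alpha}J\neq 0$ at $x$" is an open condition and the continuous Anosov structure satisfies a uniform (non-strict in the smoothing, but uniformly bounded below in the genuine cone sense) version of it, a sufficiently fine $C^0$-small smoothing of $J$ will still satisfy it by compactness of $M$; the delicate point is to simultaneously keep $J$ genuinely $d\alpha$-compatible, which may force a further conformal rescaling of $\alpha$ and a re-examination of the Reeb vector field (a different contact form changes $X_\alpha$). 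I would handle this by first fixing the smooth $J$ and only then choosing the contact form in its conformal class adapted to $J$, invoking the normalization that makes $\theta'$ constant, and finally checking that conformal Anosovity of \emph{some} Reeb field in the class is preserved under this adjustment — which is exactly where the results of \cite{Hoz} on conformally Anosov contact structures enter.
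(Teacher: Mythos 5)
Your step two is fine and is exactly how the paper reads the equivalence: by Proposition~\ref{riccimaxmeaning}, $Ricci(X_\alpha)(x)=\frac{\theta'^2}{2}$ if and only if $\mathcal{L}_{X_\alpha}J=0$ (equivalently $\mathcal{L}_{X_\alpha}g=0$) at $x$, so the two displayed conclusions are the same statement. The gap is in step one. The condition $\mathcal{L}_{X_\alpha}J\neq 0$ at a point involves the first derivative of $J$ along the flow, so it is open in the $C^1$ topology on $J$, not the $C^0$ topology; a ``sufficiently fine $C^0$-small smoothing'' of the stable/unstable data gives you no control on $\mathcal{L}_{X_\alpha}J$ whatsoever. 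Worse, the weak splitting $\xi=E^s\oplus E^u$ is in general only H\"older continuous, so there is no pointwise quantity ``$\mathcal{L}_{X_\alpha}J$ of the continuous structure, uniformly bounded below'' to preserve in the first place: the exponential estimate in the definition of conformal Anosovity is an integrated, long-time statement and does not directly yield a nonvanishing infinitesimal (time-derivative) inequality at every point for a smooth object built near the splitting. As written, your smoothing argument does not close this gap, and your proposed fix of rescaling the contact form is also dangerous, since a nonconstant rescaling changes the Reeb vector field and may destroy conformal Anosovity (while $d\alpha$-compatibility of $J$ on a $2$-dimensional $\xi$ is automatic for any orientation-compatible complex structure, so that worry is a red herring).

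The paper avoids the regularity issue entirely by using the Eliashberg--Thurston/Mitsumatsu characterization \cite{confoliations,Mitsumatsu}: $X_\alpha$ is conformally Anosov if and only if $\langle X_\alpha\rangle=\xi_+\cap\xi_-$ for a pair of transverse \emph{smooth} positive and negative contact structures $\xi_\pm$. The intersections $\xi\cap\xi_\pm$ are then smooth line fields spanned by sections $e_\pm$, and the positivity of $\xi_+$ and negativity of $\xi_-$ translate into the pointwise sign conditions $g([e_+,X_\alpha],e_-)>0$ and $g([e_-,X_\alpha],e_+)>0$; these say the linearized Reeb flow turns the two line fields angularly toward each other, which is incompatible with its being an infinitesimal rotation of $\xi$ at any point. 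By Remark~\ref{massive} and Proposition~\ref{riccimaxmeaning} this gives a compatible metric (for the same contact form, hence the same Reeb field) with $\mathcal{L}_{X_\alpha}J\neq 0$, i.e.\ $Ricci(X_\alpha)<\frac{\theta'^2}{2}$, everywhere. If you want to salvage your outline, you should replace the smoothing of $E^s\oplus E^u$ by this smooth cone/bicontact data (this is also where the results of \cite{Hoz} live), rather than trying to control a derivative condition by $C^0$ approximation.
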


However, it is interesting to know whether there are {\ttfamily"}contact topological{\ttfamily"} obstructions to global realization of a given functions, or equivalently resolving the codimension one singularity set described in Theorem~\ref{almostintro}. Based on our study of Ricci-Reeb realization problem and our other result in \cite{Hoz}, we conjecture the following which generalizes the main result of \cite{Hoz}.

 \begin{conjecture}
If $(M,\xi)$ admits a Reeb vector field $X_\alpha$ and a complex structure $J$, satisfying
$$\mathcal{L}_{X_\alpha} J \neq 0$$
everywhere, or equivalently if $(M,\xi)$ admits a compatible metric with instantaneous rotation $\theta'$ and
$$Ricci(X_\alpha)<\frac{\theta'^2}{2}$$
everywhere, then it is tight.
\end{conjecture}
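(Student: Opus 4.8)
The strategy I would pursue mirrors, and attempts to weaken the hypotheses of, the argument for conformally Anosov contact structures in \cite{Hoz}, passing through the notion of \emph{hypertightness} (no contractible periodic Reeb orbit), which by Hofer's resolution of the Weinstein conjecture in the overtwisted case implies tightness.

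\textbf{Step 1 (Infinitesimal reformulation).} First I would rewrite the hypothesis $\mathcal{L}_{X_\alpha}J\neq 0$ purely in terms of the linearized Reeb flow on $\xi$. Since $\mathcal{L}_{X_\alpha}J$ anticommutes with $J$, at each point it is a symmetric traceless endomorphism of $(\xi_x,g|_\xi)$ of positive norm, hence has a well-defined pair of orthogonal eigendirections $E^u(x)\oplus E^s(x)=\xi_x$ with eigenvalues $\pm\lambda(x)$, $\lambda(x)>0$, where by Corollary~\ref{curvform} the function $\lambda$ is controlled by $\frac{\theta'^2}{2}-Ricci(X_\alpha)$. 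The content is that at every point and every time the linearized flow $d\phi^t|_\xi$ is instantaneously \emph{not} a conformal rotation: it strictly expands some direction. This yields a continuous, but a priori \emph{non}-invariant, splitting of $\xi$.

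\textbf{Step 2 (From infinitesimal to dynamical).} The heart of the matter is to promote this pointwise non-rigidity to a genuine flow-invariant structure. I would first try to use the flexibility in the Ricci--Reeb realization problem (Theorem~\ref{almostintro} and the local realization theorem) to replace $(g,J)$ by a compatible metric in which the expansion rate $\lambda$ is bounded below and varies slowly along orbits, and then, by an averaging-and-reparametrization argument, extract from $E^u\oplus E^s$ an honest dominated splitting, i.e. exhibit $X_\alpha$ (after modification) as a conformally Anosov Reeb vector field; tightness would then follow directly from the main theorem of \cite{Hoz}. If only a weaker output is available --- a transverse pair of positive and negative plane fields, i.e. a \emph{bi-contact} structure in the spirit of Mitsumatsu, without uniform domination --- I would instead interpolate between the two associated confoliations and invoke the Eliashberg-Thurston confoliation theorem together with Mitsumatsu's analysis to produce a weak symplectic filling, hence tightness by Gromov-Eliashberg.

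\textbf{Step 3 (Alternative route via periodic orbits), and the main obstacle.} In parallel I would attempt the cleaner dichotomy directly: it suffices to show $\mathcal{L}_{X_\alpha}J\neq 0$ everywhere forbids contractible periodic Reeb orbits. Along such an orbit $\gamma=\partial D$ the linearized return map is a symplectomorphism of $\xi_{\gamma(0)}$ whose rotation is computed by integrating the instantaneous rotation against the expansion on $E^u$, and one would play this against the topological constraint $\langle e(\xi),D\rangle=0$ coming from the spanning disk, via a Gauss-Bonnet/Bennequin-type inequality, to derive a contradiction. The crux --- and the step I expect to be the genuine obstacle --- is precisely the passage from the \emph{pointwise} statement "the linearized flow is never instantaneously holomorphic" to a \emph{global} object (a dominated splitting, a bi-contact structure, a filling, or a sign-controlled accumulated rotation): $\mathcal{L}_{X_\alpha}J$ carries no canonical sign, so the expanding axes $E^u(x)$ may wind wildly along orbits and the net rotation along a contractible loop need not be monotone. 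Bridging this gap ---  presumably by exploiting the large moduli of compatible metrics realizing a given $Ricci(X_\alpha)$ furnished by this paper, or by a soft taut-foliation/Euler-class argument that avoids the dynamics altogether --- is exactly what separates this conjecture from the already-established conformally Anosov case.
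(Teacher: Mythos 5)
There is a genuine gap here --- and in fact there has to be, because the statement you are asked about is not a theorem of the paper at all: it is stated (in the introduction and again in Subsection~\ref{topob}) as an open conjecture, explicitly left unproved. What the paper actually establishes is much weaker and points in the other direction: conformal Anosovity \emph{implies} the existence of a compatible metric with $Ricci(X_\alpha)<\frac{\theta'^2}{2}$ everywhere, together with rigidity in the extreme case $Ricci(X_\alpha)\equiv\frac{\theta'^2}{2}$ and the Euler-class obstruction $2e(\xi)=0$. So your proposal should be judged as a research program, and by your own admission its central step is missing rather than proved.

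Concretely, the unproved step is exactly the one you flag in Step 3: passing from the pointwise condition $\mathcal{L}_{X_\alpha}J\neq 0$ to any flow-invariant global object. The eigendirection splitting $E^u\oplus E^s$ of the symmetric traceless tensor $\mathcal{L}_{X_\alpha}J$ in Step 1 is fine, but it is not preserved by the linearized Reeb flow, carries no domination, and the associated plane fields $\langle X_\alpha\rangle\oplus E^{u/s}$ need not be positive/negative contact structures --- the inequalities of Mitsumatsu type such as $g([e_+,X_\alpha],e_-)>0$ are precisely what conformal Anosovity supplies (see Remark~\ref{massive} and the paper's Figure 2) and are not consequences of the hypothesis here. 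Your suggested use of the realization results to improve the metric also cannot close this gap: Theorem~\ref{almostintro} only produces metrics that are singular along a page, and both it and Theorem~\ref{localreal} prescribe the single scalar $Ricci(X_\alpha)$, which by Corollary~\ref{curvform} controls $|\mathcal{L}_{X_\alpha}J|$ but says nothing about how the expanding axis winds along orbits; likewise the rotation estimate along a contractible orbit in Step~3 has no sign control, as you note. So the proposal correctly identifies the obstacle but does not overcome it, and no comparison with a paper proof is possible since none exists.
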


Finally, it is worth mentioning that for our study, we derived a new characterizations of $Ricci(X_\alpha)$ and sectional curvature of planes including $X_\alpha$ ({\em {\ttfamily"}$\alpha$-sectional curvatures{\ttfamily"}}), which we find more natural for topological purposes and more revealing about the interplay of the dynamics of Reeb flows and the underlying compatible geometry (see Remark~\ref{massive}).

\begin{theorem}
Let $(M,\xi)$ be a contact 3-manifold, equipped with a compatible metric $g$, where $\alpha$, $J$, $\theta'$ and $X_\alpha$ are the corresponding contact form, complex structure, instantaneous rotation and the Reeb vector field, respectively. Then for any unit vector $e\in\xi$:
 
 $$k(e,X_\alpha)=g(Je,\nabla_e X_\alpha)^2-g(e,\nabla_e X_\alpha)^2-\frac{\partial}{\partial t}g(e(t),\nabla_{e(t)} X_\alpha)\bigg |_{t=0}$$
 
\noindent where $e(t):=\frac{\tilde{e}(t)}{|\tilde{e}(t)|}$ and $\tilde{e}(t)$ is the unique (locally defined) $\alpha$-Jacobi field with $\tilde{e}(0)=e$. Moreover,
 
 $$Ricci(X_\alpha):= k(e,X_\alpha)+k(Je,X_\alpha)=2G(\xi) \leq \frac{\theta'^2}{2},$$
 where $G(\xi)$ is the {\em extrinsic curvature} of $\xi$.
  \end{theorem}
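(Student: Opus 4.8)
The plan is to work in a local orthonormal frame adapted to the compatible geometry, namely $\{e, Je, X_\alpha\}$ where $e \in \xi$ is an arbitrary unit vector, and to compute the sectional curvatures $k(e, X_\alpha)$ and $k(Je, X_\alpha)$ directly from the definition $k(e, X_\alpha) = g(R(e, X_\alpha)X_\alpha, e)$. The key structural input is that $X_\alpha$ is the Reeb vector field of the compatible metric, so its flow preserves $\alpha$ and $d\alpha$; consequently $\nabla X_\alpha$ is skew-symmetric (since $X_\alpha$ is Killing in the appropriate sense, or more directly because $g(\nabla_Y X_\alpha, X_\alpha) = 0$ and the restriction to $\xi$ is governed by $d\alpha$ and the instantaneous rotation $\theta'$). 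First I would express $\nabla_e X_\alpha$ and $\nabla_{Je} X_\alpha$ in the frame, recording that the $X_\alpha$-components vanish and that the $\xi$-part encodes $\theta'$ and the "shape operator"–type data of the flow; the terms $g(Je, \nabla_e X_\alpha)$ and $g(e, \nabla_e X_\alpha)$ that appear in the statement are exactly these structure coefficients.

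Next I would invoke the notion of $\alpha$-Jacobi field: for a vector $e \in \xi$, the field $\tilde e(t)$ obtained by the appropriate transport along the Reeb orbit (the one making the formula for $k(e, X_\alpha)$ work — presumably the linearized Reeb flow restricted to $\xi$, or the solution of a Jacobi-type ODE along the $X_\alpha$-geodesic). The point of normalizing $e(t) = \tilde e(t)/|\tilde e(t)|$ is to separate the "rotational" part of the linearized flow (which contributes the squared terms $g(Je, \nabla_e X_\alpha)^2 - g(e, \nabla_e X_\alpha)^2$) from the "expansion/contraction" part (which contributes the derivative term $-\frac{\partial}{\partial t} g(e(t), \nabla_{e(t)} X_\alpha)\big|_{t=0}$). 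Concretely, I expect to differentiate the identity defining $\nabla_{e(t)} X_\alpha$ along the orbit, use the Jacobi equation $\nabla_{X_\alpha}\nabla_{X_\alpha}\tilde e = -R(\tilde e, X_\alpha) X_\alpha$ (or its $\xi$-projected analogue), and match terms: the curvature term produces $k$, while the remaining terms, after using skew-symmetry of $\nabla X_\alpha$ and the product rule on the normalization, collapse to the stated right-hand side. This is essentially a bookkeeping computation once the right frame and the right definition of $\alpha$-Jacobi field are in place.

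For the second formula, I would sum the first over $e$ and $Je$. The squared terms combine: $g(Je, \nabla_e X_\alpha)^2 + g(e, \nabla_{Je} X_\alpha)^2 - g(e, \nabla_e X_\alpha)^2 - g(Je, \nabla_{Je} X_\alpha)^2$, and skew-symmetry forces $g(Je, \nabla_e X_\alpha) = -g(e, \nabla_{Je} X_\alpha)$ while $g(e, \nabla_e X_\alpha) = -g(Je, \nabla_{Je} X_\alpha)$, so this becomes $2\big(g(Je, \nabla_e X_\alpha)^2 - g(e, \nabla_e X_\alpha)^2\big)$ — independent of the choice of $e$, which is consistent with $Ricci(X_\alpha)$ being well-defined. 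I would then identify this expression with $2G(\xi)$, the extrinsic curvature of the plane field $\xi$: writing the second fundamental form / $\nabla X_\alpha$ restricted to $\xi$ as a matrix, its relevant invariant (determinant-type quantity) is precisely $g(Je, \nabla_e X_\alpha)^2 - g(e, \nabla_e X_\alpha)^2$ up to the normalization conventions, and the derivative terms telescope away after summation because $\frac{\partial}{\partial t} g(e(t), \nabla_{e(t)}X_\alpha) + \frac{\partial}{\partial t} g(Je(t), \nabla_{Je(t)} X_\alpha)$ is the derivative of a trace that is constrained by Reeb invariance. Finally, the bound $Ricci(X_\alpha) \le \frac{\theta'^2}{2}$ follows by writing $g(Je, \nabla_e X_\alpha) = \frac{\theta'}{2} + (\text{symmetric part})$ and $g(e, \nabla_e X_\alpha) = (\text{symmetric part})$, so $g(Je, \nabla_e X_\alpha)^2 - g(e, \nabla_e X_\alpha)^2$ is maximized, over the symmetric traceless part, when that part vanishes — giving $\left(\frac{\theta'}{2}\right)^2 \cdot 2 = \frac{\theta'^2}{2}$.

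The main obstacle I anticipate is pinning down the precise definition of the $\alpha$-Jacobi field and verifying that the normalization $e(t) = \tilde e(t)/|\tilde e(t)|$ interacts with the Jacobi equation exactly so as to produce the derivative term with the right sign and no leftover curvature contributions — in other words, the delicate step is the separation of the linearized Reeb flow into its rotational and conformal parts and showing the conformal part is precisely what the $t$-derivative term captures. A secondary subtlety is matching conventions so that the quantity $g(Je, \nabla_e X_\alpha)^2 - g(e, \nabla_e X_\alpha)^2$ is genuinely independent of $e$ (this is where skew-symmetry of $\nabla X_\alpha$ on $\xi$, which itself needs the Reeb/compatibility hypothesis rather than a general Killing hypothesis, must be used carefully) and that it equals the extrinsic curvature $G(\xi)$ as defined earlier in the paper. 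Once these identifications are secured, the inequality is an elementary estimate on a $2\times 2$ matrix with prescribed antisymmetric part.
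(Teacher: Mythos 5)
Your outline of the first formula (commuting the $\alpha$-Jacobi field with $X_\alpha$, using $\nabla_{X_\alpha}X_\alpha=0$, the Jacobi equation, and the product rule on the normalization $e(t)=\tilde e(t)/|\tilde e(t)|$) is in the same spirit as the paper's computation. But the passage to $Ricci(X_\alpha)=2G(\xi)$ contains a genuine error: you assume $\nabla X_\alpha$ is skew-symmetric on $\xi$, i.e.\ $g(Je,\nabla_e X_\alpha)=-g(e,\nabla_{Je}X_\alpha)$ and (implicitly) $g(e,\nabla_e X_\alpha)$ pure ``rotation''. For a compatible metric the only general facts are $g(\nabla_\cdot X_\alpha,X_\alpha)=0$, the trace-free identity $g(e,\nabla_e X_\alpha)+g(Je,\nabla_{Je}X_\alpha)=0$ (Proposition~\ref{compreeb}), and that the \emph{antisymmetric} part of $\nabla X_\alpha|_\xi$ equals $\tfrac{\theta'}{2}$; full skew-symmetry is equivalent to $\mathcal{L}_{X_\alpha}g=0$ (Proposition~\ref{riccimaxmeaning}), i.e.\ precisely the K-contact case $Ricci(X_\alpha)\equiv\tfrac{\theta'^2}{2}$, which is the degenerate situation the theorem is meant to go beyond. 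The symmetric trace-free part of $\nabla X_\alpha|_\xi$ is exactly the data $P$ and $Q$ of Corollary~\ref{curvform}, and dropping it changes the answer: your expression $2\bigl(g(Je,\nabla_e X_\alpha)^2-g(e,\nabla_e X_\alpha)^2\bigr)$ is neither independent of the choice of $e$ nor equal to $2G(\xi)=-2g(e,\nabla_e X_\alpha)^2-2g(Je,\nabla_e X_\alpha)\,g(e,\nabla_{Je}X_\alpha)$ unless the off-diagonal symmetric entry vanishes.

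Relatedly, your claim that the two $t$-derivative terms ``telescope away'' upon summation is false in general, and this is where the real work lies. The fields $\tilde e(t)$ and $\tilde e^\perp(t)$ do not stay orthogonal, so $g(e(t),\nabla_{e(t)}X_\alpha)+g(e^\perp(t),\nabla_{e^\perp(t)}X_\alpha)$ is not a trace; using that the Reeb flow preserves the induced area form on $\xi$ (Remark~\ref{instan}), one gets $|\tilde e(t)|\,|\tilde e^\perp(t)|\sin\beta(t)\equiv 1$ for the angle $\beta(t)$ between the two Jacobi fields, whence the sum equals $-\cot\beta(t)\,\beta'(t)$ and its derivative at $t=0$ is $(\beta'(0))^2=\bigl(g(Je,\nabla_e X_\alpha)+g(e,\nabla_{Je}X_\alpha)\bigr)^2$, i.e.\ exactly the square of the symmetric off-diagonal term you discarded. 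Establishing this identity is the key lemma in the paper's proof; with it, the cross terms cancel and $Ricci(X_\alpha)=2G(\xi)$ follows, and the bound $Ricci(X_\alpha)\le\tfrac{\theta'^2}{2}$ then comes from $Ricci=-2P^2+\tfrac{\theta'^2}{2}-2Q^2$ (Corollary~\ref{curvform}), not from a maximization over a symmetric part assumed absent. To repair your argument you must keep the full decomposition of $\nabla X_\alpha|_\xi$ into $\tfrac{\theta'}{2}J$ plus a symmetric trace-free part and prove the angle-derivative identity above rather than asserting the derivative terms cancel.
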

  
  In Section~\ref{2}, we recall the basic definitions and examples from contact topology and the use of open book decompositions to study contact 3-manifolds. In Section~\ref{3}, primary definitions and properties of compatible Riemannian metrics are given, with emphasis on notions of $\alpha$-Jacobi fields and the second fundamental form of a contact structure. Moreover, we give our characterization of $Ricci(X_\alpha)$ and $\alpha$-sectional curvatures, as well as the proof of local realization theorem. In Section~\ref{4}, we discuss the global aspects of Ricci-Reeb realization problem, by discussing the known global obstructions, recalling elements from metric geometry of the space of Riemannian metrics (mainly based on works of Brian Clark \cite{BC1,BC2}) and proving the almost global realization theorem.
  
  \vskip0.5cm
  \textbf{ACKNOWLEDGEMENT:} I thank my advisor John Etnyre for constant support and helpful discussions and Igor Belegradek for helping me develop a better understanding of related topics from Riemannian geometry. The author was partially supported by the NSF grant DMS-1608684. I dedicate this paper to the memory of Gholamreza H. and Hootan Raeen, a kind father and a beloved friend I lost during this project.
\section{Background From Contact Topology In Dimension 3}\label{2}
In this section, we review some basic notions from contact topology in dimension 3 and the use of open book decompositions in such study. For a more detailed introduction to contact topology, we refer the reader to \cite{Geiges} or \cite{etnyreintro} and for a concise reference on open book decompositions and their role in contact topology, one should consult \cite{OBD}.

\subsection{Contact Structures in Dimension 3}

\begin{definition}\label{contact}
We call the 1-form $\alpha$ a (positive) contact form on $M$, if
$$\alpha \wedge d\alpha > 0,$$
compared to the orientation of $M$. We call $\xi:=\ker{\alpha}$ a (positive) contact structure on $M$. Equivalently, assume $\langle u,v \rangle$ and $\langle u,v,n \rangle$ (locally) form oriented basis for $\xi$ and $TM$, respectively and $g$ is any Riemannian metric on $M$. Then $\xi$ is a (positive) contact structure if $-g([u,v],n)>0$. We call the pair $(M,\xi)$ a contact manifold.
\end{definition}

We can similarly define a negative contact structure. In this paper, we assume contact structures are positive, unless stated otherwise.

\begin{example}

Some classical examples of contact structures include:

1) We call $\xi_{std}:=\ker{\alpha_{std}}$ the standard contact structure on $\mathbb{R}^3$, where $\alpha_{std}=dz-ydx$ is a contact form on $\mathbb{R}^3$.

2) Let $\mathbb{S}^3$ be the unit sphere in $\mathbb{C}^2$. Then it can be seen that $\xi_{std}:=T\mathbb{S}^3 \cap J T\mathbb{S}^3$ (the unique complex line tangent to $\mathbb{S}^3 \subset \mathbb{C}^2$) is a contact structure, which we refer to as the standard contact structure on $\mathbb{S}^3$.

3) $\xi_n:=\ker{\{\cos{2\pi nz}dx-\sin{2\pi nz}dy\}}$ for $n\in \mathbb{N}$ yields an infinite family of contact structures on $\mathbb{T}^3=\frac{\mathbb{R}^3}{\mathbb{Z}^3}$.

4) (Boothby-Wang fibrations) Let $\Sigma$ be a closed oriented surface and $\omega$ an area form on $\Sigma$ with $0\neq [\omega]\in H^2(\Sigma ;\mathbb{Z})$. By Kobayashi \cite{Kob}, there exists an $\mathbb{S}^1$-bundle $\pi:M \to \Sigma$, equipped with the connection form $\alpha$, such that $d\alpha=\pi^* \omega$. It can be easily seen that $\alpha$ is a contact form and we call $(M,\xi:=\ker{\alpha})$ a Boothby-Wang fibration. Introduced by Boothby and Wang  \cite{bw}, these examples can be generalized to higher dimensions by considering any symplectic manifold $(\Sigma^{2n},\omega)$.

\end{example}

By Darboux theorem, contact structures do not have any local invariant, in the sense that for any $p_1 \in (M_1,\xi_1)$ and $q_2\in (M_2,\xi_2)$, there exist open neighborhoods $U_1$ and $U_2$ of $p_1$ and $p_2$, respectively and a diffeomorphism $\phi:U_1\to U_2$, such that $\phi^* (\xi_2)=\xi_1$. i.e. $\phi$ is a {\em {\ttfamily"}contactomorphism{\ttfamily"}}. Therefore the subtly of understanding contact structures is of topological nature and such study has been a prominent topic in low dimensional topology since mid 1970s. One of the main topological properties of contact manifolds, is {\em {\ttfamily"}tightness{\ttfamily"}}, first introduced by Eliashberg \cite{elover}.

 \begin{definition}
 We call $(M,\xi)$ overtwisted if there exists an embedded disk in $M$ that is tangent to $\xi$ along its boundary. Otherwise, we call $(M,\xi)$ tight.
 \end{definition}
 
Distinguishing whether a contact structure is tight or overtwisted is one of central questions is contact topology, since Eliashberg showed \cite{elover,elmart} that the study of overtwisted contact structures can be reduced to algebraic topology of the underlying manifold, while tight contact structures are harder to understand and have more subtle relation to low dimensional topology.

\begin{remark}
It can be shown that all the contact structures given in above examples are tight and the examples given on $\mathbb{S}^3$ and $\mathbb{T}^3$ are the only tight contact structures on those manifolds, while they both admit infinitely many distinct overtwisted contact structures.
\end{remark}
\subsection{Reeb Vector Fields}

It turns out that a certain class of vector fields associated to a contact manifold, namely {\em{\ttfamily"}Reeb vector fields{\ttfamily"}}, gives us a dynamical approach in understanding contact geometry and their relation to topological aspects of these structures is an important part of contact topology since early 1990s.

\begin{definition}
Let $(M,\xi)$ be a contact 3-manifold. Any choice of contact form $\alpha$ for $\xi$ defines a unique vector field $X_\alpha$ satisfying

\center{i) $d\alpha(X_\alpha ,.)=0$;}

\center{ii) $\alpha(X_\alpha)=1$. }
\end{definition}
We can easily observe
\begin{proposition}\label{reebprop}
The Reeb vector field $X_\alpha$ satisfies

a) $X_\alpha \pitchfork \xi$;

b) $\mathcal{L}_{X_\alpha} \alpha =0$ and therefore $\mathcal{L}_{X_\alpha} \xi =0$;

c) On the other hand, any vector field which is transverse to $\xi$ and keeps it invariant is a Reeb vector field for an appropriate choice of contact form.
\end{proposition}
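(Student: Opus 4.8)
The plan is to verify Proposition~\ref{reebprop} part by part, each following directly from the defining equations $d\alpha(X_\alpha,\cdot)=0$ and $\alpha(X_\alpha)=1$ together with basic Cartan calculus. For part (a), transversality of $X_\alpha$ to $\xi=\ker\alpha$ is immediate: since $\alpha(X_\alpha)=1\neq 0$, the vector $X_\alpha$ does not lie in $\ker\alpha$ at any point, hence $TM = \xi \oplus \mathbb{R}X_\alpha$ pointwise. For part (b), I would apply Cartan's magic formula $\mathcal{L}_{X_\alpha}\alpha = d(\iota_{X_\alpha}\alpha) + \iota_{X_\alpha}(d\alpha)$. The first term is $d(\alpha(X_\alpha)) = d(1) = 0$ by condition (ii), and the second term is $d\alpha(X_\alpha,\cdot) = 0$ by condition (i). Hence $\mathcal{L}_{X_\alpha}\alpha = 0$, and since $\xi = \ker\alpha$, the flow of $X_\alpha$ preserves the kernel, giving $\mathcal{L}_{X_\alpha}\xi = 0$.

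For part (c), which is the converse direction and the only part requiring a construction rather than a verification, I would proceed as follows. Suppose $Y$ is a vector field transverse to $\xi$ with $\mathcal{L}_Y\xi = 0$. Transversality means that for any contact form $\beta$ with $\ker\beta = \xi$, we have $\beta(Y)$ nowhere zero; rescaling $\beta$ by the nowhere-zero function $\beta(Y)$ (and replacing $\beta$ by $-\beta$ if necessary to keep positivity, using that $Y$ transverse to $\xi$ determines a coorientation), we may assume $\alpha(Y) = 1$ for a chosen contact form $\alpha$. It remains to check $d\alpha(Y,\cdot) = 0$, i.e. that condition (i) holds for this $\alpha$ and $Y$. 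Since $\mathcal{L}_Y\xi = 0$ and $\xi = \ker\alpha$, the form $\mathcal{L}_Y\alpha$ vanishes on $\xi$, so $\mathcal{L}_Y\alpha = h\,\alpha$ for some function $h$. Evaluating on $Y$: $(\mathcal{L}_Y\alpha)(Y) = h\,\alpha(Y) = h$. On the other hand, $(\mathcal{L}_Y\alpha)(Y) = Y(\alpha(Y)) - \alpha([Y,Y]) = Y(1) - 0 = 0$, so $h = 0$ and thus $\mathcal{L}_Y\alpha = 0$. Then Cartan's formula gives $0 = \mathcal{L}_Y\alpha = d(\alpha(Y)) + \iota_Y d\alpha = 0 + \iota_Y d\alpha$, so $d\alpha(Y,\cdot) = 0$, meaning $Y = X_\alpha$ is the Reeb field of $\alpha$.

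The only subtle point — the main obstacle, such as it is — is the bookkeeping with orientations and coorientations in part (c): one must ensure that after rescaling the original defining form $\beta$ by $\beta(Y)$, the resulting form is still a \emph{positive} contact form (inducing the given orientation of $M$), which is where transversality of $Y$ to $\xi$ as a cooriented plane field is used. Everything else is a routine application of Cartan's magic formula and the definitions, so I would keep the exposition brief and emphasize the structure of the converse argument.
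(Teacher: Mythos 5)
Your proof is correct and is exactly the standard argument the paper has in mind when it states this proposition as an easy observation (the paper gives no written proof): parts (a) and (b) from $\alpha(X_\alpha)=1$, $\iota_{X_\alpha}d\alpha=0$ and Cartan's formula, and part (c) by rescaling a defining form $\beta$ to $\alpha=\beta/\beta(Y)$ and showing $\mathcal{L}_Y\alpha=h\alpha$ forces $h=0$. The one point you flag as subtle is in fact vacuous: since $(f\beta)\wedge d(f\beta)=f^2\,\beta\wedge d\beta$ for any nowhere-zero function $f$, rescaling by $1/\beta(Y)$ automatically preserves positivity of the contact form, so no sign adjustment of $\beta$ is needed.
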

\begin{example}
The Reeb vector fields for the contact structures given above are

1) $\partial_z$ is the Reeb vector field for $(\mathbb{R}^3,\alpha_{std})$.

2) For an appropriate choice of contact form, the Reeb vector field associated to $(\mathbb{S}^3,\xi_{std})$ is tangent to the Hopf fibration on $\mathbb{S}^3$.

3) The vector fields orthonormal to $\xi_n$ (considering the flat metric on $\mathbb{T}^3$) are Reeb vector fields.

4) The integral curves of Reeb vector fields associated to the constructed contact forms on Boothby-Wang fibrations traces the $\mathbb{S}^1$ fibers, described in the construction.
\end{example}

\subsection{Open Book Decompositions and Giroux Correspondence}

Open book decompositions have become one of the main topological tools in contact topology, thanks to the celebrated {\em{\ttfamily"}Giroux correspondence{\ttfamily"}}, established by Emmanuel Giroux in 2000 \cite{giroux}, which was built upon the previous work of Thurston and Wilkelnkemper \cite{thurwilk} and gives a purely topological description of contact structures.

\begin{theorem}[Giroux Correspondence]\label{giroux}
On a given 3-manifold $M$, contact structures up to isotopy are in 1-to-1 correspondence with {\ttfamily"}open book decompositions up to positive stabilization{\ttfamily"}.
\end{theorem}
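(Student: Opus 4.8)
The plan is to prove the correspondence in both directions, treating the forward direction (open books determine contact structures) as classical and the reverse direction (every contact structure is supported by an open book, uniquely up to common positive stabilization) as the substance of Giroux's theorem. \emph{First direction.} From an open book $(\Sigma,\phi)$ --- a compact surface $\Sigma$ with boundary together with a diffeomorphism $\phi$ fixing $\partial\Sigma$ pointwise --- I would recover $M$ as the mapping torus $\Sigma_\phi=\Sigma\times[0,1]/(x,1)\sim(\phi(x),0)$ with solid tori $\partial\Sigma\times D^2$ glued in along the binding. Following Thurston and Winkelnkemper \cite{thurwilk}, I construct a $1$-form $\alpha$ on the mapping torus that restricts to a primitive of an area form on each page, has $d\alpha>0$ on the pages, and has a positive $dt$-component; after interpolating to a standard model $\alpha=h_1(r)\,dt+h_2(r)\,d\theta$ near each binding circle, with $(h_1,h_2)$ chosen so that $h_1h_2'-h_2h_1'>0$ and $h_1(0)>0$, one checks $\alpha\wedge d\alpha>0$ everywhere. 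Independence of the choices up to isotopy follows from Gray stability applied to the convex space of admissible interpolations.

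\emph{Invariance under stabilization.} A positive stabilization plumbs a positive Hopf band onto the page, that is, it attaches a $1$-handle to $\Sigma$ and postcomposes $\phi$ with a right-handed Dehn twist along a curve crossing the handle once. I would verify that this leaves the supported contact structure unchanged by exhibiting the positive Hopf band explicitly as a supporting open book for a Darboux ball and invoking the behavior of the supporting relation under Murasugi sum, so that plumbing realizes a connected sum with the tight $(\mathbb{S}^3,\xi_{std})$ at the level of supported contact structures.

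\emph{Second direction, existence.} Given $(M,\xi)$, I would produce a supporting open book by convex surface theory. The key device is a contact cell decomposition adapted to $\xi$: a CW structure whose $1$-skeleton $G$ is a Legendrian graph, whose $2$-cells are convex with Legendrian boundary of Thurston--Bennequin invariant $-1$ (so each face carries a single dividing arc), and whose $3$-cells are Darboux balls. Existence follows from a Legendrian general-position and convexity argument, using that every surface is $C^0$-close to a convex one and that Legendrian realization lets one impose the twisting condition on the faces. I would then take the ribbon $R$ of $G$ --- an embedded compact surface deformation-retracting onto $G$, tangent to $\xi$ along $G$ and with $\partial R$ positively transverse to $\xi$ --- and set the binding to be $B:=\partial R$. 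Since $M\setminus G$ is covered by the Darboux $3$-cells, each carrying a product foliation by half-disks compatible with $R$, these local fibrations assemble to a fibration of $M\setminus B$ by pages modeled on $R$, and I would confirm support by producing a Giroux contact form directly from the convex data, namely one with $\alpha|_{TB}>0$ and $d\alpha|_{\mathrm{page}}>0$.

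\emph{The main obstacle} is the uniqueness clause: that two open books supporting isotopic contact structures become isomorphic after finitely many positive stabilizations of each. The easy implication is covered above; the deep converse requires showing that any two adapted contact cell decompositions of $(M,\xi)$ are joined by a sequence of elementary moves --- edge and face subdivisions and handle additions --- each realized on the open book side by a positive stabilization, and hence that the common-stabilization class is a complete invariant. Controlling the effect of convex-surface isotopies and bypass attachments on the induced open book is exactly the point that resisted a complete written proof for two decades. Accordingly, within the scope of this paper I would treat this step as a black box, citing Giroux's announcement \cite{giroux} and the subsequent complete argument rather than reproving it, and would use only the statement of Theorem~\ref{giroux} in what follows.
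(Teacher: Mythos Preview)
Your proposal outlines the standard architecture of a proof of the Giroux correspondence, and the sketch is broadly reasonable as an expository account. However, the paper does not prove Theorem~\ref{giroux} at all: it is stated as background, attributed to Giroux~\cite{giroux}, and immediately followed by the remark that only the existence direction (every contact structure admits an adapted open book) is used. There is no proof in the paper to compare against. In that sense your proposal goes well beyond what the paper does, since the paper treats the entire statement as a cited black box, whereas you attempt to sketch both directions and explicitly flag the uniqueness-up-to-stabilization step as the genuine difficulty to be outsourced. If anything, your treatment is closer in spirit to what one finds in the survey~\cite{OBD} than to anything appearing in this paper.
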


In this paper, we only use the fact that for any contact structure on a given manifold, there exists an open book decomposition {\em{\ttfamily"}adapted{\ttfamily"}} to it. Therefore, we only include the necessary elements (and exclude describing notions like {\ttfamily"}stabilization of open books{\ttfamily"}).

\begin{definition}
An open book decomposition of a 3-manifold $M$ is a pair $(B,\pi)$ such that $B$ is an oriented link in $M$, referred to as the {\ttfamily"}binding{\ttfamily"} of the open book, and $\pi:M\backslash B \to \mathbb{S}^1$ is a fibration. For any $\tau \in \mathbb{S}^1$, $\pi^{-1}(\tau)$ is the interior of a compact surface $\Sigma_\tau$ with $\partial \Sigma_\tau =B$. We refer to the surfaces $\Sigma_\tau$ as the {\ttfamily"}pages{\ttfamily"} of the open book.
\end{definition}
\begin{example}
1) Considering $\mathbb{S}^3$ as compactified $\mathbb{R}^3$, the $z$-axis can be thought of as the binding of an open book decomposition of $\mathbb{S}^3$, with pages being diffeomorphic to disks.

2) Considering $\mathbb{S}^3 \subset \mathbb{C}^2$ as the unit sphere, the set $B:=\{ (z_1,z_2)\in\mathbb{S}^3 | z_1 z_2 =0 \}$ is the Hopf link and together with the projection $\pi: \mathbb{S}^3 \backslash B \to \mathbb{S}^1:(z_1,z_2) \to \frac{z_1 z_2}{|z_1 z_2 |}$ forms an open book decomposition of $\mathbb{S}^3$.
\end{example}
While Alexander had proved the existence of such structures on any 3-manifold \cite{alexander}, in proof of Theorem~\ref{giroux} Giroux showed that we can construct an open book decomposition {\ttfamily"}adapted{\ttfamily"} to a given contact manifold, in the following sense:

\begin{definition}\label{adapted}
We say the open book decomposition $(B,\pi)$ on $M$ is adapted to the contact structure $\xi$ if there exists some Reeb vector field $X$, such that it is (positively) tangent to $B$ and is (positively) transverse to the pages of $\pi$.
\end{definition}

We note that both open book decompositions in the above example can be isotoped to be adapted to the standard contact structure on $\mathbb{S}^3$.
\section{Local Theory Of Compatibility}\label{3}

In this section, we lay out the background on compatible Riemannian geometry and prove the local realization theorem. In Subsection~\ref{3.1}, we start with basic definitions regarding compatibility and in particular, emphasize on natural geometric notions related to them, like $\alpha$-Jacobi fields and the second fundamental form of a contact structure. In Subsection~\ref{3.2}, we give a new characterization of certain sectional curvatures and Ricci curvature of Reeb vector fields. Finally, in Subsection~\ref{3.3} we will show that by perturbing the complex structure associated to a compatible metric, we can locally realize any function as $Ricci(X_\alpha)$, respecting an upper bound.

\subsection{Compatibility, $\alpha$-Jacobi Fields and Second Fundamental Form of $\xi$}\label{3.1}
On a contact 3-manifold $(M,\xi)$, we can naturally define a Riemannian metric, by choosing a contact form, a complex structure and a positive constant, which measures the {\ttfamily"}rate of rotation{\ttfamily"} of $\xi$.

\begin{definition}
A Riemannian structure $g$ is called {\ttfamily"}compatible{\ttfamily"} with $(M,\xi)$ if 
$$ g(u,v)=\frac{1}{\theta '} d\alpha(u,Jv)+\alpha(u)\alpha(v)$$
for ant $u,v \in TM$, where $\alpha$ is a contact form for $\xi$, $\theta '$ is a positive constant, referred to as {\ttfamily"}instantaneous rotation{\ttfamily"}, and $J$ is a complex structure on $\xi$, naturally extended to $TM$ by first projecting along the Reeb vector field associated with $\alpha$.

\end{definition}
 
 \begin{example}
$(\mathbb{S}^3,\xi_{std})$ and $(\mathbb{T}^3,\xi_n)$ are compatible with round metric on $\mathbb{S}^3$ and flat metric on $\mathbb{T}^3$, respectively.
 \end{example}
 
 \begin{remark}\label{instan}
  1) It can be easily seen that $\theta '=-g([u,v],n)=d\alpha (u,v)$, where $(u,v)$ and $(u,v,n)$ are (locally defined) oriented basis for $\xi$ and $TM$, respectively and therefore, the positivity of $\theta'>0$ is equivalent to the (positive) contact condition in Definition~\ref{contact}. In other words $\theta'$ measures the rate of rotation $\xi$ with respect to {\ttfamily"}being integrable{\ttfamily"}. More precisely, for any point $x\in M$ and basis as above, we can observe that $\theta'=\frac{\partial \theta}{\partial t} \bigg|_{t=0}$, where
  
   $$  \theta (t):= \cos^{-1} \left( \frac{g((\phi_{-t})_* v,n)}{||\phi_{-t})_* v||} \right)$$
 and $\phi_t$ is the flow induced by $u$. We also observe that the area form of $g$ induced on $\xi$ is $\frac{1}{\theta'}d\alpha$ and similarly for the volume form associated with $g$,
  $$Vol(g)=\frac{1}{\theta'}\alpha \wedge d\alpha.$$
  Therefore, such area form and volume form are preserved under $X_\alpha$ by Proposition~\ref{reebprop}.

2) The very well studied class of {\em{\ttfamily"}contact metrics{\ttfamily"}} is the special case of $\theta'=2$ in the above definition (refer to \cite{bl} for the classical literature). However, such restriction is not necessary for our purpose.
 
 \end{remark}
 
 Here, we bring some useful properties of compatible metrics.
 
 \begin{proposition}\label{compreeb}
 For a compatible metric $g$ with associated contact form $\alpha$ and complex structure $J$, we have
 
 1) The Reeb vector field $X_\alpha$ is orthonormal to $\xi$ and moreover, is a geodesic field.
 
 2) The Reeb vector field $X_\alpha$ is divergence free with respect to $g$. Equivalently, for any $e\in\xi$,
 $$g(e,\nabla_e X_\alpha)+g(Je,\nabla_{Je} X_\alpha)=0.$$
 \end{proposition}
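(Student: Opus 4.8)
The plan is to extract both statements directly from the defining formula of a compatible metric, $g(u,v)=\tfrac{1}{\theta'}d\alpha(u,Jv)+\alpha(u)\alpha(v)$, together with the two characterizing properties $\alpha(X_\alpha)=1$ and $d\alpha(X_\alpha,\cdot)=0$ of the Reeb field and the basic facts in Proposition~\ref{reebprop}. First I would observe that, since $J$ takes values in $\xi$ and $d\alpha(X_\alpha,\cdot)=0$, plugging $u=X_\alpha$ into the formula gives $g(X_\alpha,Z)=\alpha(Z)$ for every $Z\in TM$; in particular $g(X_\alpha,X_\alpha)=1$ and $g(X_\alpha,e)=0$ for $e\in\xi$, so $X_\alpha$ is a unit vector field orthogonal to $\xi$. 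The same substitution with $u,v\in\xi$ shows that $\{e,Je\}$ is a $g$-orthonormal basis of $\xi$ whenever $e$ is a unit vector in $\xi$ (one checks $g(e,e)=g(Je,Je)=\tfrac{1}{\theta'}d\alpha(e,Je)$ and $g(e,Je)=0$), a fact I will reuse in part (2).

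For the geodesic property, the key remark is that $X_\alpha$ is precisely the $g$-dual of the $1$-form $\alpha$, by the identity $g(X_\alpha,\cdot)=\alpha$ just derived. For any unit vector field $X$ with $g$-dual $\eta$, torsion-freeness of the Levi--Civita connection gives $g(\nabla_X X,Y)=(\nabla_X\eta)(Y)=d\eta(X,Y)+(\nabla_Y\eta)(X)=d\eta(X,Y)+\tfrac12 Y(|X|^2)$, so when $|X|\equiv1$ the vector $\nabla_X X$ is dual to $d\eta(X,\cdot)$. Taking $X=X_\alpha$, $\eta=\alpha$, and using $d\alpha(X_\alpha,\cdot)=0$, we get $\nabla_{X_\alpha}X_\alpha=0$. (Equivalently, one can run the Koszul formula with both arguments equal to $X_\alpha$ and simplify using $|X_\alpha|\equiv1$ and $\mathcal{L}_{X_\alpha}\alpha=0$.)

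For part (2) I would argue that $X_\alpha$ is volume preserving. By Remark~\ref{instan} the volume form of $g$ is $Vol(g)=\tfrac{1}{\theta'}\alpha\wedge d\alpha$, and by Proposition~\ref{reebprop} we have $\mathcal{L}_{X_\alpha}\alpha=0$, hence $\mathcal{L}_{X_\alpha}d\alpha=d(\mathcal{L}_{X_\alpha}\alpha)=0$ and therefore $\mathcal{L}_{X_\alpha}Vol(g)=0$. Since $\mathcal{L}_{X_\alpha}Vol(g)=(\operatorname{div}X_\alpha)\,Vol(g)$, this yields $\operatorname{div}X_\alpha=0$. To see this is the asserted pointwise identity, fix a unit $e\in\xi$ and compute the divergence in the $g$-orthonormal frame $\{X_\alpha,e,Je\}$ from part (1): $\operatorname{div}X_\alpha=g(\nabla_{X_\alpha}X_\alpha,X_\alpha)+g(\nabla_e X_\alpha,e)+g(\nabla_{Je}X_\alpha,Je)$. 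The first term vanishes because $|X_\alpha|\equiv1$, so $g(\nabla_e X_\alpha,e)+g(\nabla_{Je}X_\alpha,Je)=0$ for every unit $e\in\xi$, and then for all $e\in\xi$ by bilinearity in $e$.

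I do not expect a genuine obstacle here: the entire content is careful bookkeeping of the interplay between the compatibility formula and the defining relations of $X_\alpha$. The one point deserving attention is the convention that $J$ is extended to $TM$ with $JX_\alpha=0$ and $J\xi=\xi$, which is exactly what makes the substitution $g(X_\alpha,Z)=\alpha(Z)$ and the orthonormality of $\{X_\alpha,e,Je\}$ valid as stated; beyond that, everything reduces to the standard dictionary among $\operatorname{div}$, the Lie derivative of the volume form, and the torsion-free identities for the Levi--Civita connection.
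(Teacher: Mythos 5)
Your proof is correct; the paper states Proposition~\ref{compreeb} without proof (it is a standard fact of contact metric geometry), and your argument --- reading off $g(X_\alpha,\cdot)=\alpha$ from the compatibility formula, deducing $\nabla_{X_\alpha}X_\alpha=0$ from $d\alpha(X_\alpha,\cdot)=0$ via the dual $1$-form identity, and obtaining divergence-freeness from invariance of the volume form $\tfrac{1}{\theta'}\alpha\wedge d\alpha$ together with the orthonormal frame $\{X_\alpha,e,Je\}$ --- is exactly the expected one. One cosmetic remark: the passage from unit $e$ to all $e\in\xi$ follows from the degree-two homogeneity of $e\mapsto g(e,\nabla_e X_\alpha)+g(Je,\nabla_{Je}X_\alpha)$ rather than ``bilinearity,'' but this changes nothing.
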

 
 \vskip0.75cm
 By Proposition~\ref{compreeb} 1), we have $X_\alpha$ as a geodesic field on $M$ and therefore it is natural to use {\em{\ttfamily"}Jacobi fields{\ttfamily"}} associated to $X_\alpha$, measuring the variations of such geodesic field and therefore helping us understand the dynamics and geometry of Reeb vector fields. 
 
 More precisely, for a point $p \in M$ and $\gamma:[0,\epsilon]\to M$ being a geodesic flow line of $X_\alpha$ with $\gamma(0)=p$, there exists a map
 $$\tilde{\gamma}:[0,\epsilon] \times [0,\epsilon' ] \to M$$
 such that
 
 1) $\frac{\partial \tilde{\gamma}}{\partial t}=X_\alpha$;
 
 2) $\tilde{\gamma}([0,a]\times \{ 0\} )=\gamma$;
 
 3) $v:=\frac{\partial \tilde{\gamma}}{\partial s} |_\gamma$ is orthogonal to $X_\alpha$.
 
 That means that $v$ is a (locally defined) Jacobi field and since for any such map $\tilde{\gamma}$ associated to any geodesic variation, we have $[\frac{\partial \tilde{\gamma}}{\partial t} , \frac{\partial \tilde{\gamma}}{\partial s}]=0$ (see \cite{Sakai} Lemma 2.2), we can characterize (locally defined) $v$ by

$$X_\alpha^2 v(t) +R(v(t),X_\alpha)X_\alpha =0 \text{ (The Jacobi Identity) };$$

 $$X_\alpha v(t)=\nabla_{v(t)}X_\alpha \ ,$$

\noindent where $R$ is the curvature tensor associated to $g$ and forcing the second condition at an initial point suffices. We refer to such $v(t)$ as an {\em{\ttfamily"}$\alpha$-Jacobi field{\ttfamily"}} and note that (locally) $v(t)$ is determined by fixing the initial condition $v(0)$ at $p$ and $v(t)$ is just the push forward of $v(0)$ under $X_\alpha$. We will exploit such vector fields in the proof of Theorem~\ref{curchar}. With the above remark, it is also useful to compute (see \cite{EKM2}):

\begin{proposition}\label{deriv}
For any $e\in \xi$,
$$\nabla_e X_\alpha = J\left(\frac{\theta'}{2} e -\frac{1}{2}(\mathcal{L}_{X_\alpha} J)(e)\right).$$
\end{proposition}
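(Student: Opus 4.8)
The plan is to exploit the fact that, for a compatible metric, the Reeb field is precisely the metric dual of the contact form, and then to read off $\nabla_e X_\alpha$ from the standard splitting of $\nabla X_\alpha$ into its antisymmetric part $d\alpha$ and its symmetric part $\mathcal{L}_{X_\alpha} g$. First I would record that compatibility together with $J X_\alpha = 0$ and $\alpha(X_\alpha) = 1$ gives $g(\cdot, X_\alpha) = \alpha$, so that $X_\alpha$ is the metric dual of $\alpha$. For such a dual field, the torsion-freeness of $\nabla$ and the identity $(\mathcal{L}_{X_\alpha} g)(Y,Z) = g(\nabla_Y X_\alpha, Z) + g(\nabla_Z X_\alpha, Y)$ combine into
$$g(\nabla_e X_\alpha, w) = \tfrac{1}{2} d\alpha(e,w) + \tfrac{1}{2}(\mathcal{L}_{X_\alpha} g)(e,w).$$
Both sides are tensorial in $e$, and since $X_\alpha$ is unit (Proposition~\ref{compreeb}) we have $g(\nabla_e X_\alpha, X_\alpha) = 0$, so $\nabla_e X_\alpha \in \xi$; it therefore suffices to evaluate the right-hand side against an arbitrary $w \in \xi$.

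The first term is immediate from compatibility: restricting $g(u,v) = \frac{1}{\theta'} d\alpha(u, Jv)$ to $\xi$ and substituting gives $d\alpha(e,w) = \theta' g(Je, w)$, so this term contributes exactly $\frac{\theta'}{2} Je$. The second term is where the work lies. I would apply $\mathcal{L}_{X_\alpha}$ to the compatibility identity, using $\mathcal{L}_{X_\alpha}\alpha = 0$ (Proposition~\ref{reebprop}), hence $\mathcal{L}_{X_\alpha} d\alpha = 0$, the fact that $\theta'$ is constant, and the derivation property of the Lie derivative through the contraction $d\alpha(\cdot, J\cdot)$. This collapses to
$$(\mathcal{L}_{X_\alpha} g)(e,w) = \tfrac{1}{\theta'} d\alpha(e, (\mathcal{L}_{X_\alpha} J) w) = g(Je, h w), \qquad h := \mathcal{L}_{X_\alpha} J,$$
where the last step again uses $d\alpha(e, \cdot) = \theta' g(Je, \cdot)$ on $\xi$, which is legitimate once one checks that $h$ preserves $\xi$ (this holds since the flow of $X_\alpha$ preserves $\xi$ and $J$ preserves $\xi$).

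It then remains to rewrite $g(Je, hw)$ in the form $g(-\tfrac{1}{2} Jhe, w)$, and this is the crux: I would establish the two algebraic properties of $h$ on $\xi$, namely that it anticommutes with $J$ and that it is $g$-symmetric. Anticommutation comes from applying $\mathcal{L}_{X_\alpha}$ to $J^2 = -\mathrm{id} + \alpha \otimes X_\alpha$ and using $\mathcal{L}_{X_\alpha}\alpha = 0$ and $\mathcal{L}_{X_\alpha} X_\alpha = 0$, which yields $hJ + Jh = 0$. Symmetry then follows by exploiting that $\mathcal{L}_{X_\alpha} g$ is a symmetric tensor: the resulting identity $g(Je, hw) = g(Jw, he)$, combined with the anticommutation and the skew-adjointness of $J$, gives $g(he, w) = g(e, hw)$. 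With both properties in hand, $g(Je, hw) = g(hJe, w) = -g(Jhe, w)$, so the second term contributes $-\tfrac{1}{2} Jhe$, and summing yields $\nabla_e X_\alpha = J\left(\frac{\theta'}{2} e - \frac{1}{2}(\mathcal{L}_{X_\alpha} J) e\right)$. The main obstacle is precisely the careful bookkeeping of the Lie derivative through the compatibility tensor and the derivation of the symmetry and anticommutation relations for $h$; the remaining steps are formal.
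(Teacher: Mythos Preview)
The paper does not actually prove this proposition: it is stated with the parenthetical reference ``(see \cite{EKM2})'' and no argument is given. Your proof is correct and essentially reproduces the standard derivation one finds in the contact-metric literature. The decomposition $g(\nabla_e X_\alpha, w) = \tfrac{1}{2} d\alpha(e,w) + \tfrac{1}{2}(\mathcal{L}_{X_\alpha} g)(e,w)$ is exactly the right starting point, and your handling of the second term via the Lie derivative of the compatibility relation is clean.

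One minor simplification: to reach $g(Je, hw) = -g(Jhe, w)$ you do not actually need the full $g$-symmetry of $h$. From the symmetry of $\mathcal{L}_{X_\alpha} g$ you already have $g(Je, hw) = g(Jw, he)$, and skew-adjointness of $J$ alone gives $g(Jw, he) = -g(w, Jhe) = -g(Jhe, w)$. The anticommutation $hJ + Jh = 0$ and the symmetry of $h$ are true and worth knowing, but are not strictly required for the final identity here.
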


\vskip0.75cm
Now given a Riemannian manifold $(M,g)$, for any oriented plane field $\xi$ with unit normal $n$, we can define {\em{\ttfamily"}the second fundamental form{\ttfamily"}} by:

$$\mathbb{II}(u,v)= g(\nabla_u v,n)$$

\noindent for $u,v \in \xi$.

Notice that such bilinear form is symmetric if and only if $\xi$ is integrable. Nevertheless, we can define two geometric invariants of $\xi$ using this second fundamental form, namely the \textit{mean curvature} $ H(\xi):=trace (\mathbb{II}) $ and the \textit{extrinsic curvature} $ G(\xi):= det (\mathbb{II}(\xi))$. 

By Proposition~\ref{compreeb}, if $(M,\xi)$ is a contact manifold and $g$ a compatible Riemannian metric, we will have:

 $$ H(\xi)= - div_g (X_\alpha)=0,$$
 
 \noindent while we will show in Theorem~\ref{curchar} that $G(\xi)$ can be interpreted as (a constant multiplication of) the \textit{Ricci curvature} of $X_\alpha$.

\subsection{Curvature Characterization}\label{3.2}
\begin{theorem}\label{curchar}
 Let $(M,\xi)$ be a contact 3-manifold, equipped with a compatible metric $g$. Then for any unit vector $e\in\xi$:
 
 $$k(e,X_\alpha)=g(Je,\nabla_e X_\alpha)^2-g(e,\nabla_e X_\alpha)^2-\frac{\partial}{\partial t}g(e(t),\nabla_{e(t)} X_\alpha) \bigg|_{t=0}$$
 where $e(t):=\frac{\tilde{e}(t)}{|\tilde{e}(t)|}$ and $\tilde{e}(t)$ is the unique (locally defined) $\alpha$-Jacobi field with $\tilde{e}(0)=e$. Moreover,
 
 $$Ricci(X_\alpha):= k(e,X_\alpha)+k(Je,X_\alpha)=2G(\xi).$$
  \end{theorem}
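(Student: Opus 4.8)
The plan is to compute the sectional curvature $k(e,X_\alpha)$ directly from the $\alpha$-Jacobi field machinery set up in the excerpt, and then to sum over the orthonormal pair $\{e, Je\}$ in $\xi$ to obtain $Ricci(X_\alpha)$, reconciling the answer with $2G(\xi) = 2\det(\mathbb{II}(\xi))$. The starting point is the Jacobi identity $X_\alpha^2 v + R(v,X_\alpha)X_\alpha = 0$ together with $X_\alpha v = \nabla_v X_\alpha$, valid for an $\alpha$-Jacobi field $v = \tilde e(t)$. Taking the inner product of the Jacobi identity with $\tilde e$ itself and using $k(e,X_\alpha) = g(R(e,X_\alpha)X_\alpha, e)$ (since $|e|=1$ and $e\perp X_\alpha$), I get $k(e,X_\alpha) = -g(X_\alpha^2 \tilde e, \tilde e)$ at $t=0$. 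The trick is then to differentiate the scalar function $t \mapsto g(\nabla_{\tilde e(t)} X_\alpha, \tilde e(t)) = g(X_\alpha \tilde e, \tilde e)$ twice along the flow, or more efficiently to differentiate $g(X_\alpha\tilde e,\tilde e)$ once and rearrange: $X_\alpha\, g(X_\alpha\tilde e,\tilde e) = g(X_\alpha^2\tilde e,\tilde e) + g(X_\alpha\tilde e, X_\alpha\tilde e) = g(X_\alpha^2\tilde e,\tilde e) + |\nabla_{\tilde e}X_\alpha|^2$. Combining, $k(e,X_\alpha) = |\nabla_e X_\alpha|^2 - \frac{\partial}{\partial t}g(\nabla_{\tilde e(t)}X_\alpha, \tilde e(t))|_{t=0}$.

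The remaining work is to pass from $\tilde e(t)$ (unnormalized) to $e(t) = \tilde e(t)/|\tilde e(t)|$ and to rewrite $|\nabla_e X_\alpha|^2$ in terms of the two components $g(e,\nabla_e X_\alpha)$ and $g(Je,\nabla_e X_\alpha)$. Since $\nabla_e X_\alpha \in \xi$ (because $X_\alpha$ is a unit geodesic field, so $g(\nabla_e X_\alpha, X_\alpha) = \frac12 e\,g(X_\alpha,X_\alpha) = 0$), and $\{e, Je\}$ is an orthonormal basis of $\xi$, we have $|\nabla_e X_\alpha|^2 = g(e,\nabla_e X_\alpha)^2 + g(Je,\nabla_e X_\alpha)^2$. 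To handle the normalization, write $g(\nabla_{\tilde e}X_\alpha, \tilde e) = |\tilde e|^2 g(\nabla_{e(t)}X_\alpha, e(t))$ using bilinearity of $\mathbb{II}$ (here using that $\nabla_{\tilde e}X_\alpha$ is tensorial in the lower slot — this is the content of $\mathbb{II}$ being a bilinear form), so its $t$-derivative at $0$ picks up an extra term $\big(\frac{\partial}{\partial t}|\tilde e(t)|^2\big)\, g(\nabla_e X_\alpha, e)$. Now $\frac12\frac{\partial}{\partial t}|\tilde e|^2|_{t=0} = g(X_\alpha\tilde e, \tilde e)|_{t=0} = g(\nabla_e X_\alpha, e)$, so the correction term is $2\,g(e,\nabla_e X_\alpha)^2$. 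Substituting back yields $k(e,X_\alpha) = g(Je,\nabla_e X_\alpha)^2 + g(e,\nabla_e X_\alpha)^2 - 2g(e,\nabla_e X_\alpha)^2 - \frac{\partial}{\partial t}g(e(t),\nabla_{e(t)}X_\alpha)|_{t=0}$, which collapses to the claimed formula.

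For the Ricci identity, I add $k(e,X_\alpha) + k(Je,X_\alpha)$ using the just-proven formula with $e$ replaced by $Je$ (noting $J(Je) = -e$). The derivative terms: I would argue that $g(e(t),\nabla_{e(t)}X_\alpha) + g(Je(t),\nabla_{Je(t)}X_\alpha)$ is the trace of $\mathbb{II}$ restricted to the (flow-translated) frame, which equals $-div_g(X_\alpha) = 0$ by Proposition~\ref{compreeb}, so both derivative terms cancel in the sum — this is the cleanest route. What remains is $Ricci(X_\alpha) = g(Je,\nabla_e X_\alpha)^2 - g(e,\nabla_e X_\alpha)^2 + g(e,\nabla_{Je}X_\alpha)^2 - g(Je,\nabla_{Je}X_\alpha)^2$. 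Using $g(e,\nabla_e X_\alpha) = -g(Je,\nabla_{Je}X_\alpha)$ (divergence-free, Proposition~\ref{compreeb} part 2) the first and last terms combine, and I identify the resulting expression with $\det(\mathbb{II}(\xi))$ where $\mathbb{II}$ has matrix entries $\mathbb{II}(e,e) = g(\nabla_e X_\alpha, X_\alpha)$... — rather, since $\mathbb{II}(u,v) = g(\nabla_u v, n)$ with $n = X_\alpha$, one has $\mathbb{II}(u,v) = -g(v, \nabla_u X_\alpha)$ for $u,v\in\xi$; so in the frame $\{e,Je\}$ the matrix is $\begin{pmatrix} -g(e,\nabla_e X_\alpha) & -g(Je,\nabla_e X_\alpha) \\ -g(e,\nabla_{Je}X_\alpha) & -g(Je,\nabla_{Je}X_\alpha)\end{pmatrix}$, whose determinant is $g(e,\nabla_e X_\alpha)g(Je,\nabla_{Je}X_\alpha) - g(Je,\nabla_e X_\alpha)g(e,\nabla_{Je}X_\alpha)$. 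Matching this (times $2$) against the curvature sum is then a short symmetric-algebra check, using once more the relations $g(e,\nabla_e X_\alpha) = -g(Je,\nabla_{Je}X_\alpha)$ from Proposition~\ref{compreeb}.

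The main obstacle I anticipate is bookkeeping the normalization correction carefully — in particular being certain that $\nabla_{(\cdot)}X_\alpha$ really is tensorial in the differentiated slot so that the rescaling $\tilde e \mapsto e(t)$ contributes exactly one boundary term and no spurious Christoffel-type corrections, and getting the sign and factor of $2$ in that correction right. The curvature-to-determinant identification is the other delicate point: it hinges essentially on the divergence-free condition (part 2 of Proposition~\ref{compreeb}) and on $\nabla_e X_\alpha\in\xi$, and one must be scrupulous about which off-diagonal entries of $\mathbb{II}$ appear, since $\mathbb{II}$ is not symmetric. A cross-check via Proposition~\ref{deriv}, writing $\nabla_e X_\alpha = J(\frac{\theta'}{2}e - \frac12(\mathcal L_{X_\alpha}J)e)$ and expanding everything in the $\{e,Je\}$ frame, would confirm the final identity and also immediately gives the inequality $Ricci(X_\alpha) = 2G(\xi) \le \frac{\theta'^2}{2}$ stated in the theorem (since the self-adjoint operator $\frac{\theta'}{2}\mathrm{Id} - \frac12(\mathcal L_{X_\alpha}J)\circ(\text{something})$ has determinant bounded by $\frac{\theta'^2}{4}$).
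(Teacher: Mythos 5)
Your derivation of the sectional-curvature formula is correct and is essentially the paper's own computation: the Jacobi identity together with $[X_\alpha,\tilde e\,]=0$, the tensoriality/normalization correction $-2g(e,\nabla_e X_\alpha)^2$, and the decomposition $|\nabla_e X_\alpha|^2=g(e,\nabla_e X_\alpha)^2+g(Je,\nabla_e X_\alpha)^2$ all match the proof of Theorem~\ref{curchar}. The gap is in the Ricci step. You claim the two derivative terms cancel because $g(e(t),\nabla_{e(t)}X_\alpha)+g(e^{\perp}(t),\nabla_{e^{\perp}(t)}X_\alpha)$ is the trace of $\mathbb{II}$ along the flow, hence $-div_g(X_\alpha)=0$. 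That identity requires $(e(t),e^{\perp}(t))$ to be an \emph{orthonormal} frame, but these vectors are normalized flow-pushforwards of $e$ and $Je$, and the Reeb flow of a compatible metric is not an isometry of $g|_\xi$ unless $\mathcal{L}_{X_\alpha}g=0$; it only preserves the area form $\frac{1}{\theta'}d\alpha$ (Remark~\ref{instan}). For $t\neq 0$ the angle $\beta(t)$ between $\tilde e(t)$ and $\tilde e^{\perp}(t)$ is no longer $\pi/2$, so the sum is not a trace of $\mathbb{II}$; in fact it equals $-\cot\beta(t)\,\beta'(t)$, which vanishes at $t=0$ (consistent with Proposition~\ref{compreeb}) but whose $t$-derivative at $0$ is $(\beta'(0))^2$, nonzero in general.

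This is not a harmless bookkeeping issue: dropping that term gives $Ricci(X_\alpha)=g(Je,\nabla_eX_\alpha)^2+g(e,\nabla_{Je}X_\alpha)^2-2g(e,\nabla_eX_\alpha)^2$, whereas with $\mathbb{II}(u,v)=-g(v,\nabla_u X_\alpha)$ and divergence-freeness one has $2G(\xi)=-2g(e,\nabla_eX_\alpha)^2-2g(Je,\nabla_eX_\alpha)\,g(e,\nabla_{Je}X_\alpha)$; the two expressions differ exactly by $\left(g(Je,\nabla_eX_\alpha)+g(e,\nabla_{Je}X_\alpha)\right)^2=g\!\left(e,(\mathcal{L}_{X_\alpha}J)e\right)^2$, which is nonzero wherever the metric fails to be Reeb-invariant, so your concluding ``symmetric-algebra check'' would not close (it closes only in the K-contact case, where your trace argument is actually valid). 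The missing ingredient is precisely the lemma inside the paper's proof: using area-preservation, $|\tilde e(t)|\,|\tilde e^{\perp}(t)|\sin\beta(t)=1$, one computes $\frac{\partial}{\partial t}\left\{ g(e(t),\nabla_{e(t)}X_\alpha)+g(e^{\perp}(t),\nabla_{e^{\perp}(t)}X_\alpha)\right\}\big|_{t=0}=(\beta'(0))^2$ and $g(Je,\nabla_eX_\alpha)+g(e,\nabla_{Je}X_\alpha)=-\beta'(0)$, and it is exactly this square that cancels the cross term and yields $Ricci(X_\alpha)=2G(\xi)$. Your own proposed cross-check via Proposition~\ref{deriv} would have exposed the problem, since it gives $g(Je,\nabla_eX_\alpha)+g(e,\nabla_{Je}X_\alpha)=-g(e,(\mathcal{L}_{X_\alpha}J)e)$, not $0$.
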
 
  \begin{proof}
 Since $\nabla_{X_\alpha} X_\alpha =0$ and $ [ X_\alpha , \tilde{e}(t) ]=0$,
$$k(X_\alpha , e)=g(R(e,X_\alpha)X_\alpha , e)=-g(\nabla_{X_\alpha} \nabla_{\tilde{e}} (t) X_\alpha,e)\bigg|_{t=0}$$
$$=-\frac{\partial}{\partial t} g(\nabla_{\tilde{e}(t)} X_\alpha,\tilde{e}(t))\bigg|_{t=0} +g(\nabla_e X_\alpha,\nabla_{X_\alpha} \tilde{e}(t))$$
$$=-\frac{\partial}{\partial t} g(\nabla_{\tilde{e}(t)} X_\alpha,\tilde{e}(t))\bigg|_{t=0} +|\nabla_e X_\alpha|^2$$
$$=-\frac{\partial}{\partial t} \left \{ |\tilde{e} (t) |^2 g(\nabla_{e(t)} X_\alpha,e (t)) \right \} \bigg|_{t=0} +|\nabla_e X_\alpha|^2$$
$$= -2 g(e, \nabla_e X_\alpha)^2 - \frac{\partial}{\partial t} g(\nabla_{e(t)} X_\alpha, e(t))\bigg|_{t=0} +|\nabla_e X_\alpha|^2$$
$$=g(Je,\nabla_e X_\alpha)^2-g(e,\nabla_e X_\alpha)^2-\frac{\partial}{\partial t}g(e(t),\nabla_{e(t)} X_\alpha)\bigg|_{t=0}.$$

Now if we let $e^\perp (t)=\frac{\tilde{e}^\perp (t)}{|\tilde{e}^\perp (t)|}$, where $\tilde{e}^\perp (t)$ is the $\alpha$-Jacobi field with $\tilde{e}^\perp(0)=Je$,

$$Ricci(X_\alpha)= k(e,X_\alpha)+k(Je,X_\alpha)$$
$$=g(Je,\nabla_e X_\alpha)^2 +g(-e,\nabla_{Je} X_\alpha)^2 -g(e,\nabla_e X_\alpha)^2 -g(Je,\nabla_{Je} X_\alpha)^2-...$$

$$...-\frac{\partial}{\partial t} \left \{ g(e(t),\nabla_{e(t)} X_\alpha) +g(e^\perp (t),\nabla_{e^\perp (t)} X_\alpha) \right \}\bigg|_{t=0}$$
$$=\left ( g(Je,\nabla_e X_\alpha) +g(e,\nabla_{Je} X_\alpha) \right ) ^2 -2g(Je,\nabla_e X_\alpha)g(e,\nabla_{Je} X_\alpha)-2g(e,\nabla_e X_\alpha)^2 -...$$
$$...-\frac{\partial}{\partial t} \left \{ g(e(t),\nabla_{e(t)} X_\alpha) +g(e^\perp (t),\nabla_{e^\perp (t)} X_\alpha) \right \}\bigg|_{t=0}$$
$$=2G(\xi)+\left ( g(Je,\nabla_e X_\alpha) +g(e,\nabla_{Je} X_\alpha) \right ) ^2-\frac{\partial}{\partial t} \left \{ g(e(t),\nabla_{e(t)} X_\alpha) +g(e^\perp (t),\nabla_{e^\perp (t)} X_\alpha) \right \}\bigg|_{t=0} .$$

Therefore, the following lemma will complete the proof:
\begin{lemma}
We have:
$$\left ( g(Je,\nabla_e X_\alpha) +g(e,\nabla_{Je} X_\alpha) \right ) ^2 = \frac{\partial}{\partial t} \left \{ g(e(t),\nabla_{e(t)} X_\alpha) +g(e^\perp (t),\nabla_{e^\perp (t)} X_\alpha) \right \}\bigg|_{t=0}.$$
\end{lemma}
\begin{proof}
First compute:
$$g(e(t),\nabla_{e(t)} X_\alpha) +g(e^\perp (t),\nabla_{e^\perp (t)} X_\alpha)=\frac{1}{2} \frac{\partial}{\partial t} \left \{ \ln{|\tilde{e}(t)|^2}+\ln{|\tilde{e}^\perp (t)|^2} \right \}$$
$$=\frac{1}{2} \frac{\partial}{\partial t} \left \{ \ln{|\tilde{e}(t)|^2 |\tilde{e}^\perp (t)|^2} \right \}=-\frac{1}{2} \frac{\partial}{\partial t} \left \{ \ln{\sin^2{\beta (t)}} \right \}=(-\cot{\beta (t)}) \beta' (t)$$
where $\beta (t)$ is the angle between $\tilde{e}(t)$ and $\tilde{e}^\perp (t)$ and we used the fact that Reeb flow preserves the induced area form of $g$ on $\xi$ and therefore, $\tilde{e}(t)\tilde{e}^\perp (t) \sin{ \beta (t) }=1$ for all $t$. Now:
$$\frac{\partial}{\partial t} \left \{ g(e(t),\nabla_{e(t)} X_\alpha) +g(e^\perp (t),\nabla_{e^\perp (t)} X_\alpha) \right \}\bigg|_{t=0}=\left \{ -\cot{\beta (t)}. \beta'' (t) + \csc^2{\beta (t)}.(\beta' (t))^2 \right \}\bigg|_{t=0}$$
$$=(\beta' (0))^2.$$

On the other hand:
$$g(Je,\nabla_e X_\alpha) +g(e,\nabla_{Je} X_\alpha)=g(e+Je,\nabla_{e+Je} X_\alpha)=\frac{\partial}{\partial t}\left \{ \ln{|\tilde{e}(t)+\tilde{e}^\perp (t)|^2} \right \}\bigg|_{t=0}$$
$$=\frac{\partial}{\partial t} \left \{ \ln{ \left ( |\tilde{e}(t)|^2+|\tilde{e}^\perp (t)|^2 +2|\tilde{e} (t)||\tilde{e}^\perp (t)|\cos{\beta (t)} \right ) } \right \}\bigg|_{t=0} $$
$$=\frac{   2|\tilde{e}(t)|\frac{\partial |\tilde{e}(t)|}{\partial t}+2|\tilde{e}^\perp (t)|\frac{\partial |\tilde{e}^\perp (t)|}{\partial t}+2|\tilde{e}(t)|\frac{\partial |\tilde{e}^\perp (t)|}{\partial t}\cos{\beta(t)}+2|\tilde{e}^\perp (t)|\frac{\partial |\tilde{e}(t)|}{\partial t}\cos{\beta(t)}   }{|\tilde{e}(t)+\tilde{e}^\perp (t)|^2} \bigg|_{t=0}-...$$
$$..-\frac{2|\tilde{e}(t)||\tilde{e}^\perp (t)|\sin{\beta (t) }.\beta' (t)    }{|\tilde{e}(t)+\tilde{e}^\perp (t)|^2} \bigg|_{t=0}$$
$$=\left \{ \frac{\partial |\tilde{e}(t)|}{\partial t}+\frac{\partial |\tilde{e}^\perp(t)|}{\partial t} \right \}\bigg|_{t=0}-\beta'(0)=-\beta' (0).$$
which establishes proof of the lemma. In the last equality, we used the fact that
$$0=\frac{\partial}{\partial t} \left \{ \tilde{e}(t)\tilde{e}^\perp (t) \sin{\beta (t)} \right \}\bigg|_{t=0} = \left \{ \frac{\partial |\tilde{e}(t)|}{\partial t}+\frac{\partial |\tilde{e}^\perp(t)|}{\partial t} \right \}\bigg|_{t=0}.$$

\end{proof}
  \end{proof}
  
 Let $( e,Je )$ be any local choice of an orthonormal frame for $\xi$. Using the above characterization, Remark~\ref{instan} and Koszul formula, we can derive the following formula for $Ricci(X_\alpha)$.
  
  \begin{corollary}\label{curvform}
For any $x\in M$, we can write $Ricci(X_\alpha)$ as:
$$Ricci(X_\alpha)(x)=-2P^2(x)+\frac{\theta '^2}{2}-2Q^2(x)$$
where 
$$P(x)=g( e,\nabla_e X)=\frac{1}{\theta '} d\alpha ([e,X],Je)$$
 and 
 $$Q(x)=\frac{\theta '}{2} -g( Je ,\nabla_e X)=\frac{1}{2\theta'} d\alpha ([e,X],e) -\frac{1}{2\theta '} d\alpha ([Je,X],Je) $$
 for any choice of orthonormal frame $( e, Je,X    )$. In particular, 
  $$Ricci(X_\alpha) \leq \frac{\theta '^2}{2}.$$
\end{corollary}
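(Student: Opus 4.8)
The plan is to start from the curvature characterization in Theorem~\ref{curchar}, namely $Ricci(X_\alpha) = 2G(\xi) = 2\det(\mathbb{II}(\xi))$, and to compute the entries of $\mathbb{II}$ with respect to a carefully chosen local orthonormal frame $(e, Je, X_\alpha)$. By definition $\mathbb{II}(u,v) = g(\nabla_u v, X_\alpha) = -g(v, \nabla_u X_\alpha)$ for $u,v \in \xi$ (using $g(v,X_\alpha)=0$), so the matrix of $\mathbb{II}$ in the basis $(e, Je)$ has entries $-g(e,\nabla_e X_\alpha)$, $-g(Je,\nabla_e X_\alpha)$, $-g(e,\nabla_{Je} X_\alpha)$, $-g(Je,\nabla_{Je} X_\alpha)$. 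Its determinant is therefore $g(e,\nabla_e X_\alpha)\,g(Je,\nabla_{Je} X_\alpha) - g(Je,\nabla_e X_\alpha)\,g(e,\nabla_{Je} X_\alpha)$.

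Next I would feed in the structural identities available from Propositions~\ref{compreeb} and~\ref{deriv}. Proposition~\ref{compreeb}(2) gives $g(e,\nabla_e X_\alpha) + g(Je,\nabla_{Je} X_\alpha) = 0$, so the diagonal entries are $P(x) := g(e,\nabla_e X_\alpha)$ and $-P(x)$, and their product contributes $-P^2(x)$ to the determinant. For the off-diagonal terms, Proposition~\ref{deriv} says $\nabla_e X_\alpha = J\big(\tfrac{\theta'}{2}e - \tfrac12(\mathcal L_{X_\alpha}J)(e)\big)$, which lets me compute $g(Je,\nabla_e X_\alpha)$ and $g(e,\nabla_{Je}X_\alpha)$ explicitly; pairing that with the definition of $Q(x)$ above, the skew part $\tfrac12\big(g(Je,\nabla_e X_\alpha) - g(e,\nabla_{Je}X_\alpha)\big)$ should come out equal to $\tfrac{\theta'}{2}$ (this is essentially the content that $d\alpha$ is the fixed $2$-form $\theta'\,(e^\flat\wedge Je^\flat)$ on $\xi$), while the symmetric part $\tfrac12\big(g(Je,\nabla_e X_\alpha)+g(e,\nabla_{Je}X_\alpha)\big)$ is exactly $Q(x)$ up to sign. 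Writing $g(Je,\nabla_e X_\alpha) = \tfrac{\theta'}{2} - Q$ and $g(e,\nabla_{Je}X_\alpha) = -\tfrac{\theta'}{2} - Q$, the product of the two off-diagonal entries becomes $-\big(\tfrac{\theta'}{2}-Q\big)\big(\tfrac{\theta'}{2}+Q\big) = Q^2 - \tfrac{\theta'^2}{4}$. Combining, $G(\xi) = -P^2 - (Q^2 - \tfrac{\theta'^2}{4}) = -P^2 - Q^2 + \tfrac{\theta'^2}{4}$, hence $Ricci(X_\alpha) = 2G(\xi) = -2P^2 + \tfrac{\theta'^2}{2} - 2Q^2$, which is the claimed formula. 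The expressions for $P$ and $Q$ in terms of $d\alpha([e,X],\cdot)$ follow by rewriting $g(\cdot,\nabla_\cdot X_\alpha)$ via the Koszul formula together with $\nabla_{X_\alpha}X_\alpha = 0$, $g(X_\alpha,X_\alpha)=1$, and $d\alpha(X_\alpha,\cdot)=0$, which collapses the Koszul sum to the single Lie-bracket term $-\tfrac12 g([e,X_\alpha],\cdot)$ converted into $d\alpha$ via compatibility.

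Finally, the inequality $Ricci(X_\alpha)\le \tfrac{\theta'^2}{2}$ is immediate from the displayed formula, since $-2P^2(x) - 2Q^2(x) \le 0$ pointwise. The main obstacle I anticipate is bookkeeping rather than conceptual: pinning down the correct signs and the precise split of $g(Je,\nabla_e X_\alpha)$ and $g(e,\nabla_{Je}X_\alpha)$ into their symmetric and antisymmetric parts, making sure the antisymmetric part is genuinely the constant $\tfrac{\theta'}{2}$ dictated by $d\alpha$ and not something frame-dependent, and confirming that the Koszul-formula reduction of $P$ and $Q$ to $d\alpha([e,X_\alpha],\cdot)$ expressions uses only $\nabla_{X_\alpha}X_\alpha=0$ and the closedness/invariance properties already recorded. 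Once those sign conventions are fixed consistently with Remark~\ref{instan}, the computation is short.
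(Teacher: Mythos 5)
Your proposal is correct and follows essentially the same route the paper indicates for this corollary: it invokes $Ricci(X_\alpha)=2G(\xi)=2\det(\mathbb{II})$ from Theorem~\ref{curchar}, uses the trace-free condition of Proposition~\ref{compreeb} and the relation $g(Je,\nabla_e X_\alpha)-g(e,\nabla_{Je}X_\alpha)=d\alpha(e,Je)=\theta'$ from compatibility, and reduces $P$ and $Q$ to the $d\alpha([\,\cdot\,,X_\alpha],\cdot)$ expressions via the Koszul formula, exactly the ingredients the paper cites. The only caveat is minor bookkeeping (e.g.\ the Koszul reduction gives $g(e,\nabla_e X_\alpha)=g([e,X_\alpha],e)$ rather than a factor $-\tfrac12$, and for $g(Je,\nabla_e X_\alpha)$ it yields two bracket terms plus the constant $\theta'$), which you already flag and which does not affect the argument.
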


It turns out that $Ricci(X_\alpha)$ attaining its maximum has an important geometric meaning.

\begin{proposition}\label{riccimaxmeaning}
At any point $x\in M$, the followings are equivalent:

(1) $Ricci(X_\alpha)=\frac{\theta'^2}{2};$

(2) $g(e,\nabla_e  X_\alpha)=0$ for any unit vector $e\in \xi$;

(3) $\mathcal{L}_{X_\alpha}J=0;$

(4) $\mathcal{L}_{X_\alpha}g=0.$
\end{proposition}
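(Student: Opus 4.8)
The plan is to route all four equivalences through condition (3), $\mathcal{L}_{X_\alpha}J=0$, exploiting the two explicit formulas already at hand: the expression $Ricci(X_\alpha)=-2P^2+\frac{\theta'^2}{2}-2Q^2$ from Corollary~\ref{curvform} and the identity $\nabla_e X_\alpha=J\big(\frac{\theta'}{2}e-\frac12(\mathcal{L}_{X_\alpha}J)(e)\big)$ from Proposition~\ref{deriv}. First I would record the relevant algebra of the tensor $L:=\mathcal{L}_{X_\alpha}J$. Since $J$ is extended to $TM$ by $JX_\alpha=0$, one has $J^2=-\mathrm{Id}+\alpha\otimes X_\alpha$; applying $\mathcal{L}_{X_\alpha}$ and using $\mathcal{L}_{X_\alpha}\alpha=0$ and $\mathcal{L}_{X_\alpha}X_\alpha=0$ (Proposition~\ref{reebprop}) together with the Leibniz rule gives $LJ+JL=0$, and moreover $LX_\alpha=0$ while $L$ preserves $\xi$ (as $\mathcal{L}_{X_\alpha}\xi=\xi$). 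Hence at each point $L|_{\xi_x}$ is a $g$-symmetric, trace-free endomorphism of the $2$-plane $\xi_x$ (any endomorphism anticommuting with $J$ on a Euclidean plane is of this form), so $L=0$ at $x$ iff $L|_{\xi_x}=0$.

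Next I would substitute Proposition~\ref{deriv} into the two relevant inner products, using $g(e,Je)=0$ and $g(Ja,b)=-g(a,Jb)$ (immediate from compatibility). This yields $g(e,\nabla_e X_\alpha)=\frac12 g(Je,Le)$ and $g(Je,\nabla_e X_\alpha)=\frac{\theta'}{2}-\frac12 g(e,Le)$, so in the notation of Corollary~\ref{curvform}, $P=\frac12 g(Je,Le)$ and $Q=\frac12 g(e,Le)$, whence $Ricci(X_\alpha)=\frac{\theta'^2}{2}-\frac14|L|^2$ pointwise. The equivalences of (1), (2), (3) now follow: (3)$\Rightarrow$(2) is immediate since $L=0$ makes $g(e,\nabla_e X_\alpha)=0$; (3)$\Rightarrow$(1) since $L=0$ gives $P=Q=0$; (1)$\Rightarrow$(3) since $Ricci(X_\alpha)=\frac{\theta'^2}{2}$ forces $|L|=0$. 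The one step needing care is (2)$\Rightarrow$(3): the function $e\mapsto g(e,\nabla_e X_\alpha)=-\frac12 g(e,(JL)e)$ is the quadratic form of the operator $-\frac12 JL$, which is $g$-symmetric because $L$ is symmetric, $J$ is skew and $L$ anticommutes with $J$; a symmetric operator whose quadratic form vanishes identically is zero, so $JL=0$, hence $L=0$ (as $J$ is invertible on $\xi$).

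For (3)$\Leftrightarrow$(4) I would differentiate the defining relation for $g$ directly. Writing $g=\frac1{\theta'}d\alpha(\,\cdot\,,J\,\cdot\,)+\alpha\otimes\alpha$ and using that $\theta'$ is constant and $\mathcal{L}_{X_\alpha}\alpha=0$, hence $\mathcal{L}_{X_\alpha}d\alpha=d(\mathcal{L}_{X_\alpha}\alpha)=0$, the Leibniz rule gives $(\mathcal{L}_{X_\alpha}g)(u,v)=\frac1{\theta'}d\alpha\big(u,(\mathcal{L}_{X_\alpha}J)v\big)$ for all $u,v\in TM$; in particular $\mathcal{L}_{X_\alpha}g$ vanishes automatically whenever $u$ or $v$ equals $X_\alpha$. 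Since $\mathcal{L}_{X_\alpha}J$ maps $\xi$ into $\xi$ and $d\alpha|_\xi$ is non-degenerate (the contact condition), $\mathcal{L}_{X_\alpha}g=0$ if and only if $(\mathcal{L}_{X_\alpha}J)v=0$ for every $v\in\xi$, i.e. if and only if (3) holds.

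The only step with genuine content is (2)$\Rightarrow$(3): recovering the vanishing of the full endomorphism $\mathcal{L}_{X_\alpha}J$ from the single scalar identity $g(e,\nabla_e X_\alpha)=0$. This is exactly where the anticommutation of $\mathcal{L}_{X_\alpha}J$ with $J$ — which forces it to be symmetric and trace-free on the $2$-plane $\xi_x$, so that the associated quadratic form determines it — is indispensable; everything else is bookkeeping with the Leibniz rule and the formulas of Corollary~\ref{curvform} and Proposition~\ref{deriv}.
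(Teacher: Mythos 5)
Your proposal is correct, and while it rests on the same two ingredients as the paper (the formula $Ricci(X_\alpha)=-2P^2+\frac{\theta'^2}{2}-2Q^2$ of Corollary~\ref{curvform} and the identity of Proposition~\ref{deriv}), it is organized along a genuinely different route. The paper argues implication by implication: $(1)\Leftrightarrow(2)$ is proved directly, with $(2)\Rightarrow(1)$ obtained by testing the divergence-free identity on the vector $e+Je$ and invoking Remark~\ref{instan} to pin down the antisymmetric part $g(Je,\nabla_e X_\alpha)-g(e,\nabla_{Je}X_\alpha)=\theta'$; the link to $(3)$ is made separately via Proposition~\ref{deriv}; and the proof is preceded by a Jacobi-field computation (the ``exactly four unit vectors'' Claim, proved through the angle $\beta(t)$ between $\alpha$-Jacobi fields), which feeds the dynamical picture of Remark~\ref{massive}. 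You instead funnel everything through $(3)$: you first establish the pointwise algebra of $L:=\mathcal{L}_{X_\alpha}J$ (it kills $X_\alpha$, preserves $\xi$, anticommutes with $J$, hence is $g$-symmetric and trace-free on $\xi$ --- valid since $J$ is $g$-orthogonal and skew on $\xi$ by compatibility), compute $P=\frac12 g(Je,Le)$, $Q=\frac12 g(e,Le)$, and obtain the clean identity $Ricci(X_\alpha)=\frac{\theta'^2}{2}-\frac14|L|^2$, from which $(1)\Leftrightarrow(3)$ is immediate; the only nontrivial step, $(2)\Rightarrow(3)$, you handle by polarization of the symmetric operator $JL$, replacing both the paper's Claim and its $e+Je$ argument. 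Your treatment of $(3)\Leftrightarrow(4)$ also fills in what the paper leaves as a one-line remark, supplying the non-degeneracy of $d\alpha|_\xi$ needed for the direction $(4)\Rightarrow(3)$. What each approach buys: yours yields the quantitative formula $Ricci(X_\alpha)=\frac{\theta'^2}{2}-\frac14|\mathcal{L}_{X_\alpha}J|^2$, which makes the ``deviation from the maximum measures the infinitesimal hyperbolicity'' statement of Remark~\ref{massive} explicit and is arguably worth recording on its own; the paper's Jacobi-field computation, though not strictly needed for the four equivalences, produces the finer geometric information (the distinguished directions and alternating signs of $g(e,\nabla_e X_\alpha)$) that the rest of the paper uses.
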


\begin{proof}
\begin{Claim} At any point $x \in M$, $g(e,\nabla_e X_\alpha)=0$ either for exactly 4 unit vector $e$ at $x$ or for all unit vectors at $x$. 
\end{Claim}
\begin{proof}
Since $g(e,\nabla_e X_\alpha)+g(Je,\nabla_{Je} X_\alpha)=0$ for any $e\in \xi$, there exists some $e\in \xi$ such that $g(e,\nabla_e X_\alpha)=0$. Clearly the same holds for $-e$, $Je$ and $-Je$. Now imagine $g(v,\nabla_{v} X_\alpha)=0$ for some other unit vector $v=ae+bJe$ at $x$ (where $ab\neq 0$). Then

$$0=g(v,\nabla_{v} X_\alpha)=\frac{1}{2} \frac{\partial}{\partial t} \ln{ |\tilde{v}(t)|^2} \bigg|_{t=0}=\frac{1}{2} \frac{\partial}{\partial t} \ln{ |a\tilde{e}(t) +b \tilde{e}^\perp (t)|^2} \bigg|_{t=0}$$
where $\tilde{v}(t)$, $\tilde{e}(t)$ and $\tilde{e}^\perp (t)$ are respectively the $\alpha$-Jacobi field extension of $v$, $e$ and $Je$ respectively. Letting $\beta (t)$ be the angle between $\tilde{e}(t)$ and $\tilde{e}^\perp (t)$, this means
$$0=\frac{1}{2} \frac{\partial}{\partial t} \ln{ \{ a^2 |\tilde{e}(t)|^2 +b^2 |\tilde{e}^\perp (t)|^2 +2ab|\tilde{e} (t)||\tilde{e}^\perp (t) |\cos{\beta(t)}            \}}\bigg|_{t=0} =-ab\beta ' (0)$$
So we have $\beta'(0)=0$. But this computation shows that for any other linear combination $ce+dJe$, we will have $g(ce+dJe,\nabla_{ce+dJe} X_\alpha)=0$, proving the claim.
\end{proof}

$(1)\Rightarrow (2)$ If $Ricci(X_\alpha)=\frac{\theta'^2}{2}$, then $P(x)=g( e,\nabla_e X)=0$ for any choice of unit $e\in \xi $.

$(2)\Rightarrow (1)$ In this case, $g(e+Je, \nabla_{e+Je} X_\alpha)=g(e,\nabla_{Je} X_\alpha)+g(Je,\nabla_e X_\alpha )=0$. Together with Remark~\ref{instan}, this implies
$$-g(e,\nabla_{Je}X_\alpha)=g(Je,\nabla_e X_\alpha)=\frac{\theta'}{2}$$
which implies $P(x)=Q(x)=0$.

$(3)\Rightarrow (2)$ By Proposition~\ref{deriv}, for any $e\in \xi$ we have
$$g(e,\nabla_e X_\alpha )=g(e,J\left(\frac{\theta'}{2} e -\frac{1}{2}(\mathcal{L}_{X_\alpha} J)(e)\right))=g(\frac{\theta'}{2} Je,e)=0.$$

$(1)\Rightarrow (3)$ In this case, for any $e\in \xi$, we have $g(e,\nabla_e X_\alpha)=0$ and $g(Je,\nabla_e X_\alpha )=\frac{\theta'}{2}$. Therefore,
$$g((\mathcal{L}_{X_\alpha} J)(e),e)=g((\mathcal{L}_{X_\alpha} J)(e),Je)=0,$$
which yields $\mathcal{L}_{X_\alpha} J=0$.

$(3)\iff (4)$ The equivalence follows from the fact that in the definition of a compatible metric, $\alpha$ is invariant under $X_\alpha$ and $\theta'$ is constant.

\end{proof}

\begin{remark}\label{massive}
Note that in the above discussion, since $g|_\xi$ is constantly proportional to $d\alpha|_\xi$ and $X_\alpha$ preserves $d\alpha$, after a (local) trivialization of $\xi$, we can (locally) describe the action of such Reeb flow on $\xi$ as a path in $Sp(1_\mathbb{C})$, area preserving linear maps of $\mathbb{R}^2$. Now we can decompose any $A\in Sp(1_\mathbb{C})$ as $A=MU$, where $U\in SO(2)$ measures the rotation of the flow with respect to the trivialization and $M$ is a positive definite matrix measuring the hyperbolicity of $A$. On the other hand, using the above notation, we have $g(e,\nabla_e X_\alpha)=\frac{1}{2} \frac{\partial}{\partial t} \ln{|\tilde{e}(t)|^2}$ measuring the infinitesimal rate of change of the length of vectors in $\xi$ with respect to $g$. Therefore by Theorem~\ref{curchar}, the deviation of $Ricci(X_\alpha)$ from its maximum measures the {\em{\ttfamily"}infinitesimal hyperbolicity{\ttfamily"}} of the flow of $X_\alpha$ with respect to $g$, leaving the rotation of the flow undetected. In other words, when $Ricci(X_\alpha)=\frac{\theta'^2}{2}$ at a point, the flows acts as pure rotation infinitesimally, while when $Ricci(X_\alpha)<\frac{\theta'^2}{2}$ in a neighborhood, we (locally) have a section $e$ of $\xi$ such that $g(e,\nabla_e X_\alpha)=g(Je,\nabla_{Je} X_\alpha)=0$ with $g(.,\nabla_. X_\alpha)$ having alternating signs in the intermediate regions (see Proposition~\ref{compreeb}, Proposition~\ref{riccimaxmeaning}). See Figure 1.

 Although in Theorem~\ref{localreal}, we will see that by manipulation of $g$, we can hide such hyperbolicity locally, the global consequences of such dynamical phenomena can be of (contact) topological interest (see the discussion in Subsection~\ref{topob}).

We note that the case when we have such rigidity everywhere, is studied previously either as {\em{\ttfamily"}K-contact structures{\ttfamily"}} or {\em{\ttfamily"}geodesible contact structures{\ttfamily"}} \cite{Massot}, defined to be contact manifolds equipped with compatible metrics satisfying $\mathcal{L}_{X_\alpha} J=0$ everywhere and compatible metrics whose geodesics tangent to $\xi$ at a point remains tangent to $\xi$ (which is equivalent to $g(e,\nabla_e X_\alpha )=0$ for any $e\in \xi$), respectively.

On the other hand, the term $\frac{\partial}{\partial t}g(e(t),\nabla_{e(t)} X_\alpha) |_{t=0}$ in the computation of $k(e,X_\alpha)$ can help us measure the infinitesimal rotation of $X_\alpha$ with respect to splitting of $TM$ described above, in the case of $Ricci(X_\alpha)<\frac{\theta'^2}{2}$. When considered globally, this viewpoint can potentially help us measure the rotation of Reeb fields with respect to a trivialization and hence, achieve topological information. See \cite{Hoz} for a use of such viewpoint.

\end{remark}
\begin{figure}
  \center \begin{overpic}[width=4cm]{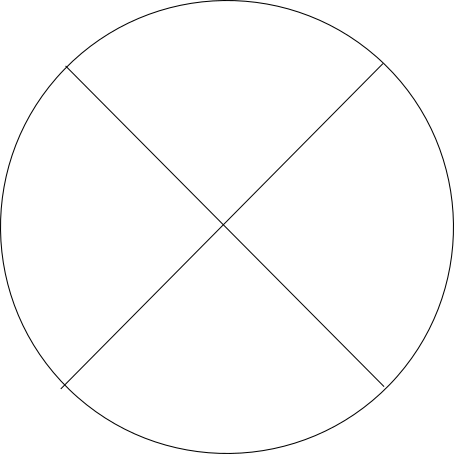}
  \put(-55,100){$g(e,\nabla_e X_\alpha)=0$}
  \put(100,100){$g(e,\nabla_e X_\alpha)=0$}
    \put(25,117){$g(e,\nabla_e X_\alpha) >0$}
      \put(115,60){$g(e,\nabla_e X_\alpha)<0$}
  \put(52,87){+}
  \put(22,57){\textemdash}
    \put(52,27){+}
  \put(82,57){\textemdash}
  
  \end{overpic}
\caption{Splitting of $\xi$ when $\mathcal{L}_{X_\alpha}g\neq 0$ and regions with alternating signs for $g(e,\nabla_e X_\alpha)$}
\end{figure}

\begin{corollary}\label{cormax}
The followings are equivalent.

1) $Ricci(X_\alpha)=\frac{\theta'^2}{2}$ everywhere;

2) $\xi$ is K-contact;

3) $\xi$ is geodesible.

\end{corollary}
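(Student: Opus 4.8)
The plan is to obtain this as an immediate globalization of the pointwise equivalences in Proposition~\ref{riccimaxmeaning}, once the two named classes are translated into the infinitesimal language used there. First I would recall the definitions, as indicated in Remark~\ref{massive}: $\xi$ is \emph{K-contact} exactly when $\mathcal{L}_{X_\alpha}J=0$ holds at every point of $M$, and $\xi$ is \emph{geodesible} exactly when every geodesic tangent to $\xi$ at a point stays tangent to $\xi$. The second condition needs to be rewritten: if $\gamma$ is tangent to $\xi$ with $\gamma'(0)=e$, then its acceleration has normal component $g(\nabla_e e,X_\alpha)$, and differentiating $g(e,X_\alpha)=0$ along $\xi$ gives $g(\nabla_e e,X_\alpha)=-g(e,\nabla_e X_\alpha)$. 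Hence $\xi$ is geodesible if and only if $g(e,\nabla_e X_\alpha)=0$ for every $e\in\xi$ at every point of $M$ — precisely the vanishing of the symmetric part of $\mathbb{II}(\xi)$.

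With this dictionary in place, the corollary is a point-by-point application of Proposition~\ref{riccimaxmeaning}. Statement (1) asserts that condition (1) of that proposition holds at every $x\in M$. By the equivalence $(1)\Leftrightarrow(2)$ there, this is the same as $g(e,\nabla_e X_\alpha)=0$ for all unit $e\in\xi$ at every point, which by the reformulation above is statement (3) of the corollary. By the equivalence $(1)\Leftrightarrow(3)$ there, it is also the same as $\mathcal{L}_{X_\alpha}J=0$ at every point, which is statement (2). So $(1)\Leftrightarrow(2)\Leftrightarrow(3)$.

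I do not expect any real obstacle here: the only point requiring care is confirming the local characterization of geodesibility as $g(e,\nabla_e X_\alpha)=0$, and noting that this is consistent with the usual terminology since $X_\alpha$ is itself a geodesic field by Proposition~\ref{compreeb}. Beyond that, the statement is purely a matter of quantifying the pointwise Proposition~\ref{riccimaxmeaning} over all of $M$, so the write-up should be a few lines.
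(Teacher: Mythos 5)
Your proposal is correct and follows essentially the same route as the paper, which treats the corollary as an immediate globalization of the pointwise equivalences $(1)\Leftrightarrow(2)\Leftrightarrow(3)$ in Proposition~\ref{riccimaxmeaning} combined with the dictionary recorded in Remark~\ref{massive} (K-contact means $\mathcal{L}_{X_\alpha}J=0$ everywhere; geodesible means $g(e,\nabla_e X_\alpha)=0$ for all $e\in\xi$). The only difference is that you supply the short computation identifying geodesibility with the vanishing of the symmetric part of $\mathbb{II}(\xi)$, a fact the paper simply asserts with a reference, so your write-up is, if anything, slightly more self-contained.
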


\subsection{Deformation and Local Realization}\label{3.3}

In order to prescribe a function for $Ricci(X_\alpha)$, we want to understand the effect of perturbing $J$ on $Ricci(X_\alpha)$.

\begin{lemma}[Perturbation of complex structure]\label{pert}
Let $(M,\xi)$ be equipped with a compatible metric $g(.,.)=\frac{1}{\theta '}d\alpha (.,J.)+\alpha(.)\alpha(.)$ and assume that there exist a line sub bundle of $\xi$ (equivalently $2e(\xi)=0$). Define a new complex structure by 
$$ J_*: \ e \mapsto \eta ^2 Je +\lambda e$$
where $e$ is any vector on the above line section, $\lambda$ is any function on $M$ and $\eta$ is a positive function on $M$. The Ricci curvature for the new compatible metric $g_* (.,.)=\frac{1}{\theta '}d\alpha (.,J_*.)+\alpha(.)\alpha(.)$ is given by
$$Ricci_*(X_\alpha)(x)= -2\left ( P_*(x) + \frac{X_\alpha\eta}{\eta} \right )^2 +\frac{\theta'^2}{2} - 2\left ( Q_* (x) - \frac{\lambda}{2 \eta} X_\alpha\eta +\frac{\eta}{2} X_\alpha(\frac{\lambda}{\eta}) \right )^2$$
where
$$P_* (x)= \frac{1}{\theta'} d\alpha ([e,X_\alpha ],Je) +\frac{1}{\theta'} \frac{\lambda} {\eta ^2} d\alpha ([e,X_\alpha ],e)$$
and
$$Q_* (x)= \frac{1}{2\theta'}  \frac{1}{\eta ^2} d\alpha ([e,X_\alpha],e) -\frac{\eta ^2}{2\theta'}d\alpha ([Je,X_\alpha],Je)  -\frac{\lambda}{\theta'} d\alpha ([e,X_\alpha ],Je) -\dots$$
$$\dots -\frac{1}{2\theta'}\frac{\lambda^2}{\eta^2}d\alpha([e,X_\alpha ],e).$$
\end{lemma}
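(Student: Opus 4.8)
The plan is to apply the curvature characterization of Corollary~\ref{curvform} directly to the new compatible metric $g_*$, treating the computation as a bookkeeping exercise in which the functions $P_*$ and $Q_*$ replace $P$ and $Q$ and the deformation introduces correction terms involving $X_\alpha\eta$ and $X_\alpha(\lambda/\eta)$. Concretely, Corollary~\ref{curvform} says that $Ricci_*(X_\alpha)(x) = -2P_*(x)^2 + \frac{\theta'^2}{2} - 2Q_*(x)^2$ \emph{provided} one computes $P_*$ and $Q_*$ with respect to a $g_*$-orthonormal frame. But the deformed complex structure $J_*\colon e\mapsto \eta^2 Je + \lambda e$ on the line field is \emph{not} $g_*$-orthonormal with respect to the obvious frame $(e, Je, X_\alpha)$: one must first renormalize. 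So the first step is to identify the correct $g_*$-orthonormal frame built from $e$, $Je$, $J_*e$, express it in terms of the original $g$-orthonormal frame $(e,Je,X_\alpha)$, and record how $d\alpha$ pairs these new frame vectors (using $d\alpha(\cdot,\cdot) = \theta' g(\cdot, J\cdot)$ on $\xi$).

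Next I would substitute this frame into the formulas $P_* = g_*(\hat e, \nabla^*_{\hat e}X_\alpha)$ and $Q_* = \frac{\theta'}{2} - g_*(J_*\hat e, \nabla^*_{\hat e} X_\alpha)$, where $\hat e$ is the normalized deformed frame vector and $\nabla^*$ is the Levi-Civita connection of $g_*$. The key simplification is that $X_\alpha$ is the same Reeb vector field for $g_*$ as for $g$ (the contact form $\alpha$ and the constant $\theta'$ are unchanged), so Koszul's formula for $g_*(\hat e, \nabla^*_{\hat e}X_\alpha)$ can be reduced to Lie bracket terms $d\alpha([\cdot, X_\alpha], \cdot)$ exactly as in the derivation of Corollary~\ref{curvform}. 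The terms $d\alpha([e,X_\alpha],Je)$, $d\alpha([e,X_\alpha],e)$, $d\alpha([Je,X_\alpha],Je)$ are the building blocks, and expanding $[J_*e, X_\alpha]$ via the product rule $[\eta^2 Je + \lambda e, X_\alpha] = \eta^2[Je,X_\alpha] + \lambda[e,X_\alpha] - (X_\alpha\eta^2)Je - (X_\alpha\lambda)e$ is what generates the $X_\alpha\eta$ and $X_\alpha(\lambda/\eta)$ terms. I expect the formulas for $P_*$ and $Q_*$ in the statement to drop out of collecting these pieces, with the coefficients $\lambda/\eta^2$, $\eta^2$, $\lambda^2/\eta^2$ arising from the normalization constants of $\hat e$ and $J_*\hat e$.

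The main obstacle I anticipate is tracking the normalization factors correctly: since $|J_*e|_{g_*}^2$ and $g_*(e, J_*e)$ are nontrivial functions of $\eta$ and $\lambda$, the $g_*$-orthonormal frame is a somewhat awkward linear combination of $e$ and $Je$, and every appearance of $\nabla^*$ or $d\alpha$ must carry these factors through. A related subtlety is that $\nabla^*$ differs from $\nabla$, so one cannot naively reuse the original $\nabla_e X_\alpha$; instead one must work with the Koszul formula from scratch for $g_*$, using that $g_*$ agrees with a rescaled-and-sheared version of $g$ on $\xi$ while being unchanged in the $X_\alpha$ direction. However, because the divergence-free property of $X_\alpha$ (Proposition~\ref{compreeb}) and the identity $d\alpha = \theta' g_*(\cdot, J_*\cdot)$ on $\xi$ both still hold for $g_*$, the structural derivation of Corollary~\ref{curvform} goes through verbatim, and the only genuinely new content is the substitution. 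So after setting up notation carefully, the proof is a direct—if tedious—computation, and I would present it by stating the normalized frame, invoking Corollary~\ref{curvform} for $g_*$, and then expanding the two brackets, relegating the algebra to a displayed computation.
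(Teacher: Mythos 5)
Your proposal is correct and follows essentially the same route as the paper: the paper also renormalizes to the $g_*$-orthonormal frame $(\tfrac{e}{\eta},\,J_*\tfrac{e}{\eta},\,X_\alpha)$, invokes the Koszul-formula expression of $g_*(\cdot,\nabla^*_{\cdot}X_\alpha)$ in terms of $d\alpha([\cdot,X_\alpha],\cdot)$ exactly as in Corollary~\ref{curvform} (valid since $\alpha$, $\theta'$, and hence $X_\alpha$ are unchanged), and expands the Lie brackets of the rescaled frame vectors with $X_\alpha$ to produce the $X_\alpha\eta$ and $X_\alpha(\lambda/\eta)$ correction terms.
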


\begin{proof}
Let $\overset{*}{\nabla}$ be the Levi-Civita connection associated to $g_*$. Note that under the above perturbation the length of $e$ will become $\eta$. So $\frac{e}{\eta}$ will be the unit vector in the direction of $e$. Applying Koszul formula as in Corollary~\ref{curvform} we have

$$g_* (\frac{e}{\eta} ,\overset{*}{\nabla} _{\frac{e}{\eta}} X_\alpha)=
\frac{1}{\theta'} d\alpha ([\frac{e}{\eta},X_\alpha ],J_* \frac{e}{\eta})=
\frac{1}{\theta'} d\alpha (\frac{1}{\eta}[e,X_\alpha ]-X_\alpha (\frac{1}{\eta})e,\eta Je+\frac{\lambda}{\eta}e)$$
$$=\frac{1}{\theta'} d\alpha ([e,X_\alpha],Je) +\frac{1}{\theta'} \frac{\lambda} {\eta ^2} d\alpha ([e,X_\alpha],e)+ \frac{X_\alpha \eta}{\eta}$$
We will also have
$$g_* ( J_* \frac{e}{\eta} ,\overset{*}{\nabla} _{\frac{e}{\eta}} X_\alpha)=\frac{\theta'}{2} - \frac{1}{2\theta'} d\alpha ([\frac{e}{\eta},X_\alpha ],\frac{e}{\eta}) +\frac{1}{2\theta '} d\alpha ([J_* \frac{e}{\eta},X_\alpha ],J_* \frac{e}{\eta})$$
$$=\frac{\theta'}{2} - \frac{1}{2\theta'} d\alpha (\frac{1}{\eta}[e,X_\alpha]+\frac{X_\alpha\eta}{\eta^2} e,\frac{e}{\eta}) +\frac{1}{2\theta '} d\alpha (\eta [Je,X_\alpha]-(X_\alpha\eta)Je+\frac{\lambda}{\eta}[e,X_\alpha]-(X_\alpha\frac{\lambda}{\eta})e,\eta Je+\frac{\lambda}{\eta} e)$$
$$= \frac{\theta '}{2} - \frac{1}{2\theta'}  \frac{1}{\eta ^2} d\alpha ([e,X_\alpha],e) +\frac{\eta ^2}{2\theta'}d\alpha ([Je,X_\alpha],Je) +\frac{\lambda}{2\theta'}d\alpha([Je,X_\alpha],e) +\frac{\lambda}{2\theta'} d\alpha ([e,X_\alpha],Je) +\dots$$
$$\dots +\frac{1}{2\theta'}\frac{\lambda^2}{\eta^2}d\alpha([e,X_\alpha],e) +\frac{\lambda}{2\eta} X_\alpha\eta -\frac{\eta}{2} X_\alpha\frac{\lambda}{\eta}.$$

\end{proof}

As a result, starting from any compatible metric, it is enough to perturb the associated complex structure to realize any function as $Ricci(X_\alpha)$ locally, assuming it respects the upper bound on Ricci curvature.

\begin{theorem}[Local realization]\label{localreal}
 Let $(M,\xi)$ be a contact 3-manifold equipped with a compatible metric $g$ and $x\in M$ an arbitrary point, and $f:M \rightarrow \mathbb{R}$ a function such that $f(x) \leq \frac{\theta'^2}{2}$. Then there exists the neighborhood $U$ containing $x$ and a compatible metric $g_*$ with instantaneous rotation $\theta'$ such that

1) $Ricci_* (X_\alpha )(x)=f(x)$ on $U$;

2) $g=g_*$ at  $x$.

\end{theorem}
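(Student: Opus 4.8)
The plan is to use Lemma~\ref{pert} directly: since $2e(\xi)=0$ is needed for the perturbation lemma, I first note that the statement should really be local, so I may pass to a neighborhood $U$ of $x$ small enough that $\xi|_U$ is trivial and hence admits a line subbundle; the constructions in Lemma~\ref{pert} only need to be carried out on $U$. Starting from the given compatible metric $g$ with complex structure $J$ and instantaneous rotation $\theta'$, I choose a local section $e$ of a line subbundle of $\xi|_U$, so $(e, Je, X_\alpha)$ is a local orthonormal frame near $x$, and then I look for functions $\eta>0$ and $\lambda$ on $U$ producing the perturbed complex structure $J_*: e\mapsto \eta^2 Je + \lambda e$ whose Ricci curvature formula is given explicitly by Lemma~\ref{pert}.

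The second step is to arrange condition (2), namely $g=g_*$ at $x$: reading off the definition of $J_*$, this forces $\eta(x)=1$ and $\lambda(x)=0$ (so that $J_*=J$ exactly at $x$). With those normalizations in place, I evaluate the Ricci formula of Lemma~\ref{pert} at $x$. Because $\eta(x)=1$ and $\lambda(x)=0$, the terms $P_*(x)$ and $Q_*(x)$ reduce at $x$ to the original $P(x)$ and $Q(x)$ from Corollary~\ref{curvform}, so
\[
Ricci_*(X_\alpha)(x) = -2\Bigl(P(x)+ (X_\alpha\eta)(x)\Bigr)^2 + \frac{\theta'^2}{2} - 2\Bigl(Q(x) + \tfrac12 (X_\alpha\lambda)(x)\Bigr)^2 .
\]
Thus the only remaining freedom I need is the pair of first-order derivatives $(X_\alpha\eta)(x)$ and $(X_\alpha\lambda)(x)$, which can be prescribed independently of the $0$-jet constraints $\eta(x)=1$, $\lambda(x)=0$ — e.g. take $\eta = 1 + a\,\tau$ and $\lambda = b\,\tau$ along a flow line of $X_\alpha$ through $x$ for suitable constants $a,b$, extended arbitrarily (but smoothly and with $\eta>0$) to $U$.

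The third step is the elementary algebra of hitting the target value: given $f(x)\le \frac{\theta'^2}{2}$, I must solve
\[
-2\bigl(P(x)+a\bigr)^2 - 2\bigl(Q(x)+\tfrac{b}{2}\bigr)^2 = f(x) - \frac{\theta'^2}{2} \le 0 .
\]
The right side is a nonpositive number, and the left side ranges over all nonpositive reals as $(a,b)$ vary over $\mathbb{R}^2$ (indeed one can already achieve it with $b=-2Q(x)$, i.e. killing the second square, and choosing $a$ so that $2(P(x)+a)^2 = \frac{\theta'^2}{2}-f(x)$). So a solution always exists; pick such $a,b$, build $\eta,\lambda$ on $U$ as above with $\eta$ staying positive, and define $g_*$ via Lemma~\ref{pert}. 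This $g_*$ is a genuine compatible metric with instantaneous rotation $\theta'$ on $U$, equals $g$ at $x$, and has $Ricci_*(X_\alpha)(x)=f(x)$.

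The only genuine subtlety — what I would call the "main obstacle," though it is mild — is the interplay between the pointwise normalization $\eta(x)=1,\lambda(x)=0$ (forced by condition (2)) and the need to still control the derivatives $X_\alpha\eta, X_\alpha\lambda$ at $x$; this is resolved simply because prescribing the $0$-jet and the $X_\alpha$-derivative at a single point are independent conditions on a smooth function, and one then extends to $U$ keeping $\eta>0$. A secondary point worth checking is that after the perturbation $J_*$ is still a complex structure compatible with $d\alpha|_\xi$ in the orientation sense (i.e. $d\alpha(\cdot, J_*\cdot)|_\xi$ is positive definite), which holds as long as $\eta>0$ — this is exactly why $\eta$ is required to be a positive function in Lemma~\ref{pert}. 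No global hypothesis on $e(\xi)$ is actually used because everything is confined to the chart $U$.
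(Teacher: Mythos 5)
Your argument only prescribes the value of $Ricci_*(X_\alpha)$ at the single point $x$, but condition 1) of Theorem~\ref{localreal} asks for $Ricci_*(X_\alpha)=f$ \emph{on all of} $U$ --- that is what the phrase ``on $U$'' means, it is the only reason a neighborhood appears in the statement at all, and it is the form of the statement actually used later: the proof of the almost global realization theorem (Theorem~\ref{almostintro}) propagates exactly this local solution across the pages of an open book, so a realization at one point would not feed into it. With your choice of constants $a=(X_\alpha\eta)(x)$, $b=(X_\alpha\lambda)(x)$ and an arbitrary smooth extension of $\eta,\lambda$ to $U$, the identity $-2\left(P_*+\frac{X_\alpha\eta}{\eta}\right)^2+\frac{\theta'^2}{2}-2\left(\cdots\right)^2=f$ will generically fail at every point of $U$ other than $x$, so what you prove is a strictly weaker (and, as you yourself observe, essentially trivial pointwise-algebra) statement. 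This is the genuine gap.

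The paper's proof closes exactly this gap: it treats $Ricci_*(X_\alpha)=f$ on $U$ as a PDE in the unknowns $\eta$ and $\mu=\lambda/\eta^2$, picks a small surface $\Sigma_0\ni x$ transverse to $X_\alpha$ so the Reeb flow gives $U\simeq\Sigma_0\times(-\epsilon,\epsilon)$, and solves two ODEs along each flow line with initial data $\eta=1$, $\mu=0$ on $\Sigma_0$: the first ODE chooses $\eta$ so that the term $\hat{P}_*+X_\alpha\ln\eta$ vanishes identically, and the second chooses $\mu$ so that the remaining square equals $\frac{\theta'^2}{4}-\frac{f}{2}$ (note this needs $f\le\frac{\theta'^2}{2}$ on $U$, not just at $x$, for the square root to be defined). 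Existence and uniqueness for ODEs then give $Ricci_*(X_\alpha)=f$ on all of $U$, and the initial conditions give $g=g_*$ on $\Sigma_0$, hence at $x$. Your first two steps are consistent with the paper --- working locally so that the line field exists (the paper likewise uses Lemma~\ref{pert} with a local trivialization, so no hypothesis on $e(\xi)$ is needed), and normalizing $\eta(x)=1$, $\lambda(x)=0$ to get $g_*=g$ at $x$ --- but to complete the proof you must replace the one-point algebra in your third step by this ODE argument along the Reeb flow.
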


\begin{proof}
Let $\mu =\frac{\lambda}{\eta ^2}$. After choosing local trivialization $e$, we can rewrite the equations of Lemma~\ref{pert} for the corresponding perturbation of the almost complex structure $J$:

$$Ricci_*(X_\alpha) (x)=-2\left ( \hat{P_*}(x)+X_\alpha (\ln{\eta}) \right )^2 +\frac{\theta'^2}{2} - 2\eta^4 \left(\hat{Q_*}(x)+\frac{1}{2\theta'} \frac{1}{\eta^4}d\alpha ([e,X_\alpha ],e)+\frac{1}{2} X_\alpha \mu \right)^2$$
where
$$\hat{P_*}(x)=\frac{1}{\theta'} d\alpha ([e,X_\alpha ],Je) + \frac{1}{\theta'} \mu d\alpha ([e,X_\alpha ],e)$$
and 
$$\hat{Q_*}(x)= -\frac{1}{2\theta'}d\alpha ([Je,X_\alpha],Je)  -\frac{\mu}{\theta'} d\alpha ([e,X_\alpha],Je)  -\frac{1}{2\theta'} \mu^2 d\alpha([e,X_\alpha],e)$$

Now in order to solve the PDE $Ricci_* (X_\alpha)(x)=f(x)$ locally, let $U$ be an open neighborhood around $x$ such that $x \in \Sigma_0 \subset U$, where $\Sigma_0$ is a (local) smooth surface transverse to $X_\alpha$ including $x$ and $X_\alpha$ gives the neighborhood $U$ a smooth product structure $U \simeq \Sigma_0 \times (-\epsilon ,\epsilon )$. Now, we can solve our PDE on $U$, by solving the following two PDEs.
$$(1) \begin{cases}
\hat{P_*}(x)-X_\alpha(\ln{\eta})=0 \\
\eta |_{\Sigma_0}=1
\end{cases}$$
$$(2) \begin{cases}
\frac{\theta'^2}{4} -\eta^4 (\hat{Q_*}(x)+\frac{1}{2\theta'} \frac{1}{\eta^4}d\alpha ([e,X_\alpha],e)+\frac{1}{2} X_\alpha \mu)^2= \frac{f(x)}{2} \\
\mu |_{\Sigma_0}=0
\end{cases}$$
But exploiting the (local) product structure above, we can translate these two PDEs into two ODEs on $\Sigma_0$.
$$(1) \begin{cases}
\frac{\partial}{\partial t} \ln{\eta} = -\frac{1}{\theta'} d\alpha ([e,X_\alpha],Je) -\frac{1}{\theta'} \mu d\alpha ([e,X_\alpha],e)\\
\eta (0)=1
\end{cases}$$
$$(2) \begin{cases}
\frac{1}{2} \frac{\partial}{\partial t} \mu = \frac{1}{\eta^2} \sqrt{\frac{\theta'^2}{4}-\frac{f(x(t))}{2}} -\frac{1}{2\theta'} \frac{1}{\eta^4}d\alpha ([e,X_\alpha],e) +\frac{1}{2\theta'}d\alpha ([Je,X_\alpha],Je)+...\\
\hskip6cm...  +\frac{\mu}{\theta'} d\alpha ([e,X_\alpha],Je)  +\frac{1}{2\theta'} \mu^2 d\alpha([e,X_\alpha],e)\\
\mu (0)=0
\end{cases}$$
Now because of existence and uniqueness of the solution of ODEs, we can solve these two equations in the following way. First solve (1) for $\eta$ in terms of $\mu$. More explicitly,
$$\eta (x(t))= e^{\int_0^t \hat{P_*}(x(s))ds}$$
which depends on the unknown $\mu(x(t))$ (note that $\eta$ stays positive). But replacing this solution (in terms of $\mu$) into (2), we will have another ODE
$$(2) \begin{cases}
\frac{\partial}{\partial t} \mu (x(t))=F(x(t),\mu) \\
\mu (0)=0
\end{cases}$$
for the appropriate function $F$. Now we can locally solve this ODE to find $\mu$. Replacing this into the solution for $\eta$ which was in terms of $\mu$, we find $\eta$. Hence, we also have found $\lambda=\mu \eta^2$. The complex structure defined by these two parameters will define the desired Riemannian metric $g_*$. Notice that $g=g_* |_{\Sigma_0}$ by our initial conditions.
\end{proof}

\section{Open Book Decompositions and Almost Global Realization}\label{4}

The goal of this section is to establish the {\ttfamily"}almost global realization theorem{\ttfamily"}. In Subsection~\ref{topob}, we discuss how such realization can depend on the underlying (contact) topological information. In Subsection~\ref{4.2}, we recall elements from metric geometry of the space of (compatible) Riemannian metrics, mostly due to Brian Clarke \cite{BC1,BC2}. In Subsection~\ref{4.3}, we give the proof for the almost realization theorem.

\subsection{Topological Obstructions}\label{topob}

First we note that forcing $Ricci(X_\alpha)$ to obtain its maximum everywhere restricts the contact topology significantly, since this is equivalent to $\mathcal{L}_{X_\alpha} J=0$ everywhere. Putting the previous works of previous works of \cite{AlReg,Ruk,Massot,HT} together, we have

\begin{theorem}
 Let $(M,\xi)$ be a closed contact 3-manifold and $g$ a compatible Riemannian metric with instantaneous rotation $\theta'$, such that $Ricci(X_\alpha)=\frac{\theta '^2}{2}$ everywhere. Then $(M,\xi)$ is finitely covered by a Boothby-Wang fibration with $\xi$ being a tight symplectically fillable contact structure. Moreover, if all the periodic Reeb orbits associated with $g$ are {\ttfamily"}non-degenerate{\ttfamily"}, i.e. their Poincare return map does not have 1 as eigenvlaue, then $(M,\xi)$ is finitely covered by $(\mathbb{S}^3,\xi_{std})$.

\end{theorem}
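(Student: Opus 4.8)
The plan is to convert the curvature hypothesis into the K-contact condition and then feed the resulting rigid geometry into known structure theorems. First I would apply Corollary~\ref{cormax} (equivalently Proposition~\ref{riccimaxmeaning}): the assumption $Ricci(X_\alpha)\equiv\frac{\theta'^2}{2}$ is exactly the statement $\mathcal{L}_{X_\alpha}g=0$, so $X_\alpha$ is a unit-length Killing field for the compatible metric $g$ and the Reeb flow $\phi_t$ acts by isometries preserving $\alpha$, hence $\xi$. From here on everything is a statement about a closed oriented contact $3$-manifold whose Reeb flow is isometric, i.e. a K-contact, equivalently geodesible, structure.

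Next I would extract a torus symmetry and a Boothby--Wang description. Since $M$ is closed, $\mathrm{Isom}(M,g)$ is a compact Lie group; let $T\subseteq\mathrm{Isom}(M,g)$ be the closure of $\{\phi_t\}$, a torus acting on $M$ by isometries. Invariance of $\alpha$ is a closed condition, so $T$ preserves $\alpha$ and $\xi$, with $X_\alpha\in\mathrm{Lie}(T)$. Because $X_\alpha\pitchfork\xi$ everywhere, compactness provides a neighborhood of the direction of $X_\alpha$ in $\mathrm{Lie}(T)$ consisting of vector fields still transverse to $\xi$, hence, by Proposition~\ref{reebprop} c), Reeb fields of $T$-invariant contact forms for $\xi$; choosing a rational one yields a locally free $S^1$-action preserving a contact form $\alpha'$ for $\xi$. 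This is Rukimbira's quasi-regularity theorem \cite{Ruk}: the quotient $\pi\colon M\to B$ is a closed oriented $2$-orbifold, $\alpha'$ is a connection form with $d\alpha'=\pi^*\omega$ for an orbifold area form $\omega$ with $[\omega]\neq 0$, so $(M,\xi)$ is an (orbifold) Boothby--Wang fibration. Using the classification of K-contact (geodesible) $3$-manifolds in \cite{AlReg,Massot}, or by passing to a finite orbifold cover of $B$ by an honest surface $\Sigma$, one obtains a finite cover $\widetilde M\to M$ with $\widetilde M\to\Sigma$ a genuine Boothby--Wang fibration (cf. \cite{bw}). Such fibrations are symplectically --- indeed Stein --- fillable by the associated disk bundle, hence tight, which gives the first assertion.

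For the non-degenerate statement I would assume in addition that every periodic Reeb orbit of $X_\alpha$ is non-degenerate and argue that there can only be finitely many of them. If $X_\alpha$ were periodic, $M$ would Seifert-fiber along it and every regular fiber would have Poincar\'e return map equal to the identity, which has $1$ as an eigenvalue; since regular fibers are dense, this is impossible, so $X_\alpha$ is non-periodic and $\dim T\ge 2$. The closure of any Reeb orbit is a $T$-orbit; a Reeb orbit inside a $2$-dimensional $T$-orbit is either non-closed or belongs to a torus family of closed orbits, whose return map again has eigenvalue $1$. Hence the only periodic Reeb orbits are the finitely many exceptional ($1$-dimensional) $T$-orbits, and there is at least one by Taubes' theorem (the Weinstein conjecture in dimension three). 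A contact form on a closed $3$-manifold with finitely many simple Reeb orbits forces, by the work of Hutchings and Taubes \cite{HT}, the manifold to be a lens space --- in particular $S^3$ --- with exactly two (elliptic) simple orbits. Combining this with the first part, the finite Boothby--Wang cover $\widetilde M$ then also has finite fundamental group, so it is an $S^1$-bundle over $S^2$, and its own universal cover is $(S^3,\xi_{std})$ (the Hopf fibration is the only Boothby--Wang structure with total space $S^3$ and it carries $\xi_{std}$; alternatively, invoke Eliashberg's uniqueness of tight contact structures on $S^3$ \cite{elover}). This $(S^3,\xi_{std})$ finitely covers $(M,\xi)$, as claimed.

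I expect the main obstacle to be orchestrating the external inputs cleanly rather than any single computation. Two points need care: passing from "isometric Reeb flow" to the orbifold Boothby--Wang picture genuinely relies on Rukimbira-type approximation, and the passage to a \emph{manifold} Boothby--Wang cover requires ruling out (or absorbing) the bad $2$-orbifolds --- this is where \cite{AlReg,Massot} do the work; and in the non-degenerate case the jump from "finitely many Reeb orbits" to "lens space" is the deep embedded-contact-homology input of \cite{HT}. The remaining delicate bookkeeping is tracking fillability and tightness through the various finite covers without circularity, which the Boothby--Wang structure makes manageable since it is preserved under pullback along covers.
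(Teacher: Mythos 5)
The first half of your argument is essentially the paper's: Corollary~\ref{cormax} converts the curvature hypothesis into the K-contact condition, and Rukimbira's approximation by almost regular K-contact structures \cite{Ruk} together with Thomas \cite{AlReg} produces the Seifert/generalized Boothby--Wang structure and the finite Boothby--Wang cover; for fillability and tightness the paper simply cites Niederkr\"uger--Pasquotto \cite{Nied}, whereas you fill by the associated disk bundle. (Minor caution there: the disk bundle contains the symplectic zero section, so it is a strong symplectic filling but not Stein; symplectic fillability plus Eliashberg--Gromov is all you need for tightness, so this does not affect the conclusion.)

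The nondegenerate case, however, has a genuine gap. Your torus-closure argument plausibly shows that nondegeneracy forces the closed Reeb orbits to be exactly the finitely many exceptional fibers, but the step ``a contact form with finitely many simple Reeb orbits forces, by \cite{HT}, the manifold to be a lens space'' is not what Hutchings--Taubes prove. Their results are: (i) if all Reeb orbits of a contact form are nondegenerate \emph{and elliptic}, the manifold is a lens space (including $S^3$), and (ii) if the manifold is not a lens space, a nondegenerate contact form has at least three embedded Reeb orbits. Neither statement rules out your situation with finitely many ($\geq 3$) nondegenerate orbits --- e.g.\ a Seifert fibration with three exceptional fibers, as for a Brieskorn sphere, is not excluded by anything you have established; the ``two or infinitely many'' and ``exactly two implies lens space'' theorems are later work by other authors and are not available here. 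The repair is exactly the observation the paper makes and that you never use: since the Reeb flow is isometric and preserves $g|_\xi$, the linearized return map of every closed orbit is orthogonal, hence has unit-modulus eigenvalues, so \emph{all} closed orbits are elliptic; combined with the nondegeneracy hypothesis, statement (i) of \cite{HT} applies directly and yields $S^3$ or a lens space (with $\xi$ universally tight), after which your covering argument goes through. With that substitution the finiteness-of-orbits discussion and the appeal to the Weinstein conjecture become unnecessary.
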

\begin{proof}

The implication follows from classification of K-contact structures by Rukimbira \cite{Ruk} (see Corollary~\ref{cormax}). In fact, after an arbitrary small perturbation $(M,\xi,g)$ can be approximated by arbitrary close {\ttfamily"}almost regular{\ttfamily"} K-contact structure. i.e. $X_\alpha$ induces a $S^1$ action as Killing vector field. It turns out \cite{AlReg} that this induces Seifert fibration structure on $(M,\xi)$ whose fibers keep $\xi$ invariant. This is called a {\ttfamily"}generalized Boothby-Wang fibration{\ttfamily"} and is finitely covered by a Boothby-Wang fibration. Furthermore, \cite{Nied} shows that these contact structures are symplectically fillable and tight.

Moreover, since in this case $X_\alpha$ preserves the length of any vector $e\in \xi$, all periodic orbits will be {\ttfamily"}elliptic{\ttfamily"}. i.e. have (complex) Poincare return map with unit length eigenvalues. If furthermore, all periodic orbits are non-degenerate as well, \cite{HT} shows that $(M,\xi)$ is either $(\mathbb{S}^3,\xi_{std})$ or a Lens space with $\xi$ being universally tight.
\end{proof}

Now it is also interesting to understand the extreme opposite of the above situation. That is when $(M,\xi)$ admits a {\em{\ttfamily"}nowhere Reeb-Invariant{\ttfamily"}} compatible metric. i.e. a metric for which $\mathcal{L}_{X_\alpha} g \neq 0$ everywhere. First, we easily observe that there are algebraic obstructions for the existence of such metrics, improving \cite{kroug}.

\begin{theorem}
Let $(M,\xi)$ be any contact 3-manifold with $2e(\xi) \in H^2 (M) \neq 0$. Then for any compatible metric $g$ with instantaneous rotation $\theta'$, there exists some point $x\in M$ at which $Ricci(X_\alpha)(x) = \frac{\theta'^2}{2}$, where $X_\alpha$ is the Reeb vector field corresponding to $g$.
\end{theorem}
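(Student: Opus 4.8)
The plan is to argue by contradiction: suppose that $Ricci(X_\alpha)(x) < \frac{\theta'^2}{2}$ for \emph{every} $x \in M$, and derive that $2e(\xi) = 0$ in $H^2(M;\mathbb{Z})$. By Proposition~\ref{riccimaxmeaning} (together with the Claim in its proof), the assumption $Ricci(X_\alpha)(x) < \frac{\theta'^2}{2}$ says precisely that the function $e \mapsto g(e,\nabla_e X_\alpha)$ on the unit circle of $\xi_x$ does not vanish identically, and hence vanishes at exactly four unit vectors forming two orthogonal lines $\mathbb{R}e_0$ and $\mathbb{R}Je_0$. Writing $e_\phi = \cos\phi\, e_0 + \sin\phi\, Je_0$ and using $g(Je,\nabla_{Je}X_\alpha) = -g(e,\nabla_e X_\alpha)$ from Proposition~\ref{compreeb}, one computes $g(e_\phi,\nabla_{e_\phi}X_\alpha) = a\cos 2\phi + \tfrac{b}{2}\sin 2\phi = A\cos(2\phi-\delta)$, where $a = g(e_0,\nabla_{e_0}X_\alpha)$, $b = g(e_0,\nabla_{Je_0}X_\alpha)+g(Je_0,\nabla_{e_0}X_\alpha)$, and the amplitude $A=\sqrt{a^2+b^2/4}$ is strictly positive by the non-vanishing just observed. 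In particular this function attains its maximum along a \emph{unique} line in $\xi_x$. Let $L \subset \xi$ be the resulting field of maximizing lines; since $A>0$ pointwise, $L$ depends continuously (indeed smoothly) on $x$, and its $g$-orthogonal complement $JL$ is the field of minimizing lines.

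Next I would extract the topological conclusion from the existence of the line field $L \subset \xi$. The bundle $L$ need not be orientable, so let $p\colon \tilde M \to M$ be the double cover classified by $w_1(L)\in H^1(M;\mathbb{Z}/2)$, on which $p^*L$ is orientable and hence trivial. Then $p^*L$ has a nowhere-vanishing section, which is in particular a nowhere-vanishing section of $p^*\xi$, so $p^*e(\xi) = e(p^*\xi) = 0$ in $H^2(\tilde M;\mathbb{Z})$. Applying the transfer homomorphism $p_!\colon H^2(\tilde M;\mathbb{Z}) \to H^2(M;\mathbb{Z})$ of the double cover, for which $p_!\circ p^*$ is multiplication by $2$, gives $2e(\xi) = p_!\big(p^*e(\xi)\big) = 0$, contradicting the hypothesis $2e(\xi)\neq 0$. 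Since $Ricci(X_\alpha) \leq \frac{\theta'^2}{2}$ everywhere by Corollary~\ref{curvform}, it follows that $Ricci(X_\alpha)(x) = \frac{\theta'^2}{2}$ for some $x$, which is the assertion.

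The delicate point — and the reason for working with the \emph{maximizing} line rather than with the unordered pair of zero-lines $\{\mathbb{R}e_0,\mathbb{R}Je_0\}$ — is that one needs a canonically defined line field on $M$ itself. Using the unordered pair would first force a double cover to order it and then, a priori, a further double cover to orient the resulting line field, yielding only $4e(\xi)=0$ by the same transfer argument, which is strictly weaker than the stated conclusion. Pinning down $L$ as the line along which $g(\cdot,\nabla_\cdot X_\alpha)$ is maximal removes this ambiguity at the cost of only the single orientation double cover, and is exactly what produces the factor $2$. The steps that need care are the strict positivity of $A$ everywhere (which is the content of Proposition~\ref{riccimaxmeaning}), the elementary trigonometric normal form for $g(e_\phi,\nabla_{e_\phi}X_\alpha)$, and the continuity of $L$ — all routine given the local theory already developed, so no genuine obstacle remains beyond organizing these observations.
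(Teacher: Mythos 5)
Your argument is correct and follows essentially the same route as the paper: assuming $Ricci(X_\alpha)<\frac{\theta'^2}{2}$ everywhere, Proposition~\ref{riccimaxmeaning} (with Proposition~\ref{compreeb}) gives the trigonometric normal form $A\cos(2\phi-\delta)$ with $A>0$ and hence a canonical line field inside $\xi$ (the paper takes a line with $g(e,\nabla_e X_\alpha)>0$, you take the maximizing line; these agree up to homotopy), and the existence of a line field in $\xi$ forces $2e(\xi)=0$, contradicting the hypothesis. The only real difference is that the paper cites \cite{Kob} for that last topological implication, whereas you prove the needed direction directly via the orientation double cover of the line field and the transfer identity $p_!\circ p^*=2$, which is a correct, self-contained substitute for the citation.
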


\begin{proof}
The proof immediately follows from the fact that if we have$Ricci(X_\alpha)<\frac{\theta'}{2}$ everywhere (see Remark~\ref{massive}), there exists a (unique up to homotopy) line field $\langle e \rangle \subset \xi$ with $g(e,\nabla_e X_\alpha)>0$, and therefore $\xi$ admits a globally defined line field. By \cite{Kob}, this is equivalent to $2e(\xi) \in H^2 (M) = 0$.
\end{proof}

However, we still do not know whether this is the only obstruction or if there are others of contact topological nature. In fact, in \cite{Hoz} we 
conjectured the following statement in support of the latter viewpoint, which can be seen to partly generalize the main theorem of \cite{Hoz} about {\em{\ttfamily"}conformally Anosov{\ttfamily"}} contact 3-manifolds.

\begin{conjecture}
If $\xi$ admits a Reeb vector field and a complex structure $J$, satisfying
$$\mathcal{L}_{X_\alpha} J \neq 0$$
everywhere, or equivalently if $(M,\xi)$ admits a compatible metric with instantaneous rotation $\theta'$ and
$$Ricci(X_\alpha)<\frac{\theta'^2}{2}$$
everywhere, then it is tight.
\end{conjecture}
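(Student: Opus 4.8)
\medskip
\noindent\textbf{A proposed line of attack.} Since this is a conjecture, what follows is a plan rather than a proof. The natural strategy is to reduce the hypothesis to the case already handled in \cite{Hoz}, where $(M,\xi)$ carries a genuinely conformally (equivalently, projectively) Anosov Reeb flow, and then invoke the main theorem there.

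First I would extract the dynamical content of a nowhere Reeb-invariant compatible metric $g$. By the Claim in the proof of Proposition~\ref{riccimaxmeaning}, on the unit circle of each fibre of $\xi$ the function $v\mapsto g(v,\nabla_v X_\alpha)$ is of the form $\rho\cos(2\psi-\psi_0)$ with $\rho>0$ everywhere (the constant term vanishes because $X_\alpha$ is divergence free, Proposition~\ref{compreeb}, and $\rho=0$ would force $Ricci(X_\alpha)=\tfrac{\theta'^2}{2}$, contrary to hypothesis). Hence $g$ singles out a continuous maximizing line field $\langle e\rangle\subset\xi$ with $g(e,\nabla_eX_\alpha)>0$ everywhere, a continuous minimizing line field $\langle Je\rangle$ with $g(Je,\nabla_{Je}X_\alpha)<0$ everywhere, and a proper cone field $\mathcal{C}^{+}=\{v\in\xi:\ g(v,\nabla_vX_\alpha)\ge 0\}$ with nonempty interior. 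By Remark~\ref{massive}, $g(e(t),\nabla_{e(t)}X_\alpha)=\tfrac12\tfrac{d}{dt}\ln|\tilde e(t)|^{2}$ along $\alpha$-Jacobi fields, so positivity on $\mathcal{C}^{+}$ says precisely that the linearized Reeb flow on $\xi$ \emph{instantaneously} expands the $g$-length of every vector in $\mathcal{C}^{+}$ and contracts that of every vector in $\mathcal{C}^{-}:=J\,\mathcal{C}^{+}$.

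The crux is to promote this instantaneous (pointwise) hyperbolicity to a \emph{dominated} one, i.e.\ to an invariant continuous splitting of $\xi$ under the Reeb flow, which is what it means for $X_\alpha$ to be projectively Anosov. The cone $\mathcal{C}^{+}$ is infinitesimally forward invariant, but to apply the standard cone-field criterion one needs the cones strictly invariant and eventually absorbing under the time-$\epsilon$ maps of the flow; the obstruction is exactly the infinitesimal rotation of $\langle e\rangle$ along the flow weighed against the expansion rate, which is the extra term $\tfrac{\partial}{\partial t}g(e(t),\nabla_{e(t)}X_\alpha)\big|_{t=0}$ in Theorem~\ref{curchar} and the phenomenon described in Remark~\ref{massive}. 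I would try to control it by replacing $\alpha$ with $h\alpha$ for a suitable positive function $h$ (which leaves $\xi$, hence the conclusion, unchanged while rescaling and reparametrizing the Reeb flow) together with a further deformation of $J$ in the spirit of Lemma~\ref{pert}, chosen so that along every orbit the accumulated expansion on $\mathcal{C}^{+}$ dominates the accumulated rotation on a fixed time scale. If this succeeds, the cone-field criterion yields a conformally Anosov Reeb field for $\xi$ and the main result of \cite{Hoz} gives tightness. I expect this reduction to be the genuine obstacle, and it may fail outright: there is no a priori reason the instantaneous hyperbolicity cannot cancel along orbits in a way that survives every such modification, which is why the statement is only conjectural.

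If the reduction is not available, the fallback is a direct argument by contradiction. Assuming $\xi$ overtwisted, one feeds an overtwisted disk into a Bishop-family / filling-by-holomorphic-disks scheme in the style of Hofer's proof of the Weinstein conjecture for overtwisted contact structures, producing a finite-energy holomorphic plane asymptotic to a contractible periodic Reeb orbit of $\alpha$ with constrained Conley--Zehnder data; one then tries to show this is incompatible with the constraints that the cone $\mathcal{C}^{+}$ forces on closed Reeb orbits, either on their linearized return maps, or, via the $\alpha$-Jacobi field description of Section~\ref{3}, on their rotation numbers. Either route appears to require input beyond what is developed here, consistent with the problem being open.
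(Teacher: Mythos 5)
The statement you are addressing is not proved in the paper: it is stated (twice) as a conjecture, explicitly described as open and as a generalization of the main theorem of \cite{Hoz}, so there is no argument of the author's to compare yours against. Your proposal is honestly labeled as a plan, and its first half correctly isolates where the difficulty lies; but as it stands it contains a genuine gap, and it is exactly the gap that makes the statement conjectural. The reduction you propose--promoting the pointwise condition $\mathcal{L}_{X_\alpha}J\neq 0$ (equivalently $Ricci(X_\alpha)<\tfrac{\theta'^2}{2}$, via Proposition~\ref{riccimaxmeaning}) to a flow-invariant dominated splitting so that \cite{Hoz} applies--would amount to proving that every nowhere Reeb-invariant compatible metric can be upgraded to a conformally Anosov Reeb field. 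The paper proves only the opposite implication (conformal Anosovity gives such a metric), and the whole point of the conjecture is that its hypothesis is strictly weaker a priori. The cone field $\mathcal{C}^{+}$ you build from the sign of $g(v,\nabla_v X_\alpha)$ is only \emph{instantaneously} expanded; the cone-field criterion for a dominated splitting needs strict invariance and uniform domination over a definite time, and the rotation term $\tfrac{\partial}{\partial t}g(e(t),\nabla_{e(t)}X_\alpha)\big|_{t=0}$ of Theorem~\ref{curchar} can make the expansion cancel along orbits. You acknowledge this, but you offer no mechanism that rules out such cancellation, so the "crux" step is asserted rather than argued.

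Two further technical points undercut the proposed repairs. First, replacing $\alpha$ by $h\alpha$ does not merely rescale and reparametrize the Reeb flow: $X_{h\alpha}$ differs from $\tfrac{1}{h}X_\alpha$ by a component in $\xi$ determined by $dh$, so its orbits are in general different, and the cone field you constructed from $g$ and $\nabla X_\alpha$ has no a priori invariance property for the new flow; the argument would have to be restarted from scratch for each such deformation, and Lemma~\ref{pert} only controls the pointwise quantities $P_*,Q_*$, not any integrated (orbitwise) domination. Second, the fallback via Hofer-type filling with holomorphic disks needs a concrete incompatibility between overtwistedness and the hypothesis--for instance a uniform constraint on Conley--Zehnder indices or rotation numbers of contractible closed orbits--and the pointwise inequality $Ricci(X_\alpha)<\tfrac{\theta'^2}{2}$ gives no such constraint on a closed orbit's return map, since the instantaneous hyperbolicity can average out over a period (this is again the same cancellation issue). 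So your proposal is a sensible map of the terrain, consistent with the paper's own discussion in Remark~\ref{massive} and Subsection~\ref{topob}, but it does not constitute a proof, and the statement remains open.
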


It is worth mentioning that using our computation, we can see that when it does admit such compatible metric, then we can make $Ricci(X_\alpha)$ arbitrary far from the upper bound, confirming the significance of the dichotomy discussed above.

\begin{theorem} 
Assume $(M,\xi)$ admits some compatible metric with instantaneous rotation $\theta'$ and $Ricci(X_\alpha)<\frac{\theta'^2}{2}$ everywhere, in particular if $(M,\xi)$ is conformally Anosov. Then for any $c \leq \frac{\theta'^2}{2}$, there exists some compatible metric with instantaneous rotation $\theta'$ and $Ricci(X_\alpha)<c$.

\end{theorem}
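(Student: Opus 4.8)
The plan is to reduce the statement to a pointwise amplification problem and to solve it by one carefully chosen perturbation of the complex structure. By Corollary~\ref{curvform}, any compatible metric with instantaneous rotation $\theta'$ satisfies $Ricci(X_\alpha)=\tfrac{\theta'^2}{2}-2(P^2+Q^2)$, so realizing $Ricci(X_\alpha)<c$ everywhere is the same as realizing $P^2+Q^2>\rho$ everywhere, where $\rho:=\tfrac12\big(\tfrac{\theta'^2}{2}-c\big)$ is otherwise an arbitrary positive number. Hence it suffices to show: starting from a compatible metric $g$ with $P^2+Q^2>0$ everywhere --- which exists by hypothesis, and in the conformally Anosov case by (the later theorem recording) \cite{Hoz} --- one can produce, for every $\rho$, a compatible metric with the same $\theta'$ and $P^2+Q^2\ge\rho$ on all of $M$.

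First I would extract the structure the hypothesis gives. Since $Ricci(X_\alpha)<\tfrac{\theta'^2}{2}$ everywhere, Proposition~\ref{riccimaxmeaning} and Remark~\ref{massive} produce a globally defined line field $\langle e\rangle\subset\xi$ with $P=g(e,\nabla_e X_\alpha)>0$; by compactness $P\ge\delta>0$, and $2e(\xi)=0$ by \cite{Kob}, so Lemma~\ref{pert} applies with $e$ chosen on this line field. Writing $[e,X_\alpha]=ae+bJe$ and $[Je,X_\alpha]=ce-aJe$ (the $Je$-coefficient of the second bracket being $-a$ by the divergence-free identity of Proposition~\ref{compreeb}), one gets $a=P$ and $Q=-\tfrac12(b+c)$. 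Feeding the deformation $J_*\colon e\mapsto \eta^2 Je+\lambda e$ of Lemma~\ref{pert} into Corollary~\ref{curvform}, the model choice $\eta\equiv 1$, $\lambda\equiv\Lambda$ (a large constant) yields
$$Ricci_*(X_\alpha)=-2\,(a-\Lambda b)^2+\tfrac{\theta'^2}{2}-2\Big(Q-\Lambda a+\tfrac{\Lambda^2}{2}b\Big)^2,$$
which blows up wherever $b$ stays bounded away from $0$ (first square $\sim \Lambda^2 b^2$) and wherever $b=0$ (second square $\sim \Lambda^2 a^2\ge \Lambda^2\delta^2$).

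The obstacle --- and the step I expect to be hardest --- is the intermediate ``bad set'' $\{\,b\approx 2a/\Lambda\,\}$, where both squares remain $O(1)$ and $Ricci_*$ is essentially unchanged; it is nonempty precisely when $b$ changes sign, and as $\Lambda\to\infty$ it collapses onto $\{b=0\}$, leaving genuine trouble only along the locus $\{b=0\}\cap\{X_\alpha b=0\}$ (generically a link in $M$, being the preimage of $(0,0)$ under $(b,X_\alpha b)\colon M^3\to\mathbb R^2$). I would absorb the bad set by keeping $\lambda$ nonconstant: with $\eta\equiv 1$ and $\lambda=\Lambda\,b/\sqrt{b^2+\varepsilon^2}$, the term $a-\lambda b\to-\infty$ at every point with $b\ne 0$, while near $\{b=0\}$ the $Q_*$-correction $\tfrac12 X_\alpha\lambda\approx \tfrac{\Lambda}{2\varepsilon}\,X_\alpha b$ from Lemma~\ref{pert} drives the second square to infinity wherever $b$ crosses $0$ transversally to the Reeb flow; one fixes $\varepsilon$ and then lets $\Lambda\to\infty$, with all estimates uniform by compactness of $M$.

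What then remains is a neighborhood of the link $\{b=0=X_\alpha b\}$, and resolving this is the crux of the argument. Here I would exploit that Lemma~\ref{pert} has \emph{two} free functions against \emph{one} equation, using $\eta$ together with $\lambda$ in a neighborhood of the link: a suitable $\eta$ makes $P_*+X_\alpha\ln\eta$ carry the blow-up along directions transverse to the link, after a preliminary $C^\infty$-small deformation from Lemma~\ref{pert}/Theorem~\ref{localreal} putting $b$ into general position (small enough to preserve $P>0$, hence the line field and all the identities above). An alternative I would keep in reserve, in case the link contains recurrent Reeb behavior, is to solve directly a transport/Riccati equation along the Reeb flow for the pair $(\eta,\lambda)$ so that $Q_*-\tfrac{\lambda}{2\eta}X_\alpha\eta+\tfrac{\eta}{2}X_\alpha(\tfrac{\lambda}{\eta})$ equals a prescribed large constant; the delicate point there, as usual in such realization arguments, is compatibility along closed Reeb orbits.
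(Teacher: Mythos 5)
Your setup and computations are correct, and it is worth saying that your first step \emph{is} the paper's entire proof: the paper takes $\eta\equiv 1$ and $\lambda=\Lambda$ a large constant, notes that at each point $Ricci_\Lambda(X_\alpha)(x)$ is a nonconstant even-degree polynomial in $\Lambda$ with negative leading coefficient, and concludes by compactness that one constant works globally. Your ``bad set'' observation is exactly the point where that compactness step needs justification: at $\Lambda=2a/b$ one gets $P_\Lambda=-a$ and $Q_\Lambda=Q$ on the nose, so $Ricci_\Lambda(X_\alpha)=Ricci(X_\alpha)$ there, and whenever $b$ changes sign the locus $\{b=2a/\Lambda\}$ is nonempty for every large $\Lambda$; the pointwise thresholds are therefore not obviously uniformly bounded, and the sets $\{Ricci_\Lambda<c\}$ are not nested in $\Lambda$. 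So your diagnosis of where the difficulty lies is sharper than the argument given in the paper.

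The gap is that your proposal does not then close the argument. The modified ansatz $\lambda=\Lambda b/\sqrt{b^2+\varepsilon^2}$ is only analyzed heuristically (you must rule out that the same exact cancellation you found for constant $\lambda$ recurs for this $\lambda$ at points with $b$ and $X_\alpha b$ both small but nonzero, uniformly in $\Lambda$ for fixed $\varepsilon$), and the step you yourself call the crux --- forcing the blow-up on a neighborhood of $\{b=0=X_\alpha b\}$ --- is only named, with two candidate strategies and no argument for either. Neither is routine: the $\eta$ degree of freedom enters $P_*$ only through $X_\alpha\ln\eta$, which has zero integral over any closed Reeb orbit and zero average against the invariant volume, so it cannot be given a large fixed sign along recurrent orbits meeting the residual locus; and the Riccati/transport alternative runs into exactly the monodromy issue along closed orbits that you flag. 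This is precisely the kind of global obstruction that produces the singular page in Theorem~\ref{almostintro}, so it cannot be waved away. As written the proposal is a correct reduction plus a research plan; to be a proof it must either complete the analysis near $\{b=0=X_\alpha b\}$ or exhibit a choice of line field and perturbation for which the cancellation locus is empty.
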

\begin{proof}
Since $(M,\xi)$ admits a metric with $Ricci(X_\alpha)<\frac{\theta'^2}{2}$, we have $2e(\xi)=0\in H^2(M)$.  Then there exists a line sub bundle $\langle e \rangle \subset \xi$. Choose some contact from $\alpha$ and complex structure $J$. For some constant $\lambda$ define a perturbation of complex structure $J_\lambda :\langle e \rangle  \to J\langle e \rangle +\lambda \langle e \rangle $. Letting $\eta=1$ and $X\lambda=0$ in Lemma~\ref{pert}, we have
$$Ricci_\lambda(X_\alpha) (x)= -2\left(P_\lambda(x)\right)^2 +\frac{\theta'^2}{2} - 2\left(Q_\lambda (x) \right)^2$$
where
$$P_\lambda (x)= -\frac{1}{\theta'} d\alpha ([e,X],Je) -\frac{1}{\theta'} \lambda d\alpha ([e,X],e)$$
and
$$Q_\lambda (x)= \frac{1}{2\theta'}   d\alpha ([e,X],e) -\frac{1}{2\theta'}d\alpha ([Je,X],Je)  -\frac{\lambda}{\theta'} d\alpha ([e,X],Je) -\dots$$
$$\dots -\frac{1}{2\theta'} \lambda^2d\alpha([e,X],e)$$
So $Ricci_\lambda(X_\alpha) (x)$ is a non-constant (since we start with $Ricci(X_\alpha)<\frac{\theta'^2}{2}$) polynomial with even degree in terms of $\lambda$ and function coefficients. At each point, we can choose $\lambda$ such that we have $Ricci_\lambda(X_\alpha) (x)<c$ at that point. Since $M$ is compact, we can choose such $\lambda$ globally.
\end{proof}

Finally, we note that the existence of a nowhere-Reeb invariant metric can be concluded, under the dynamical assumption of conformal Anosovity on $(M,\xi)$. A conformal Anosov contact manifold is a contact manifolds $(M,\xi)$ admitting a conformally Anosov Reeb vector field. i.e. some $X_\alpha$ and the continuous $X_\alpha$-invariant splitting $\xi \simeq E^s \oplus E^u$, such that for any $u\in E^s$ and $v \in E^u$,
$$ ||\phi_*^t (v)  || / ||\phi_*^t (u) || \geq Ae^{Ct}||v || /||u ||;$$
where $\phi^t$ is the flow of $X_\alpha$ and $A,C>0$ are positive constants.

It is easy to see \cite{confoliations,Mitsumatsu} that conformal Anosovity of $X_\alpha$ is equivalent to $\langle X_\alpha \rangle =\xi_+ \cap \xi_-$, where $\xi_+$ and $\xi_-$ are transverse positive and negative contact structures on $M$. Now if we (locally) consider sections $e_+ \in \xi \cap \xi_+$ and $e_-\in\xi\cap\xi_-$ such that $(e_+ , e_- )$ form an oriented basis for $\xi$, positivity of $\xi_+$ and negatively of $\xi_-$ will imply $g([e_+,X_\alpha],e_-)>0$ and $g([e_-,X_\alpha],e_+)>0$, respectively. Therefore, the dynamics of $X_\alpha$ cannot be {\em{\ttfamily"}purely rotational{\ttfamily"}} (see Figure 2) and by discussion in Remark~\ref{massive}, we have

 \begin{figure}\label{fig2}

  \center \begin{overpic}[width=4cm]{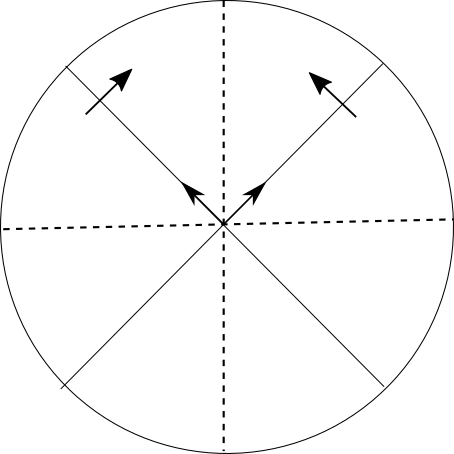}
  \put(72,65){$e_+$}
  \put(32,65){$e_-$}
         \put(10,105){$\xi_-$}
         \put(95,105){$\xi_+$}
         \put(57,120){$E^u$}
         \put(120,60){$E^s$}
       \end{overpic}
    \caption{Confomally Anosov dynamics}
    \label{fig:1}

  \hspace{2cm}

\end{figure}
\begin{theorem}
Let $(M,\xi)$ be a conformally Anosov contact 3-manifold. Then $\xi$ admits a Reeb vector field and a complex structure $J$, satisfying
$$\mathcal{L}_{X_\alpha} J \neq 0$$
everywhere, or equivalently $(M,\xi)$ admits a compatible metric with
$$Ricci(X_\alpha)<\frac{\theta'^2}{2}$$
everywhere.
\end{theorem}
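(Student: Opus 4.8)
The plan is to exploit the known characterization of conformal Anosovity in terms of a bi-contact structure together with the sign discussion of Remark~\ref{massive} and Proposition~\ref{riccimaxmeaning}. Recall that $X_\alpha$ is conformally Anosov precisely when $\langle X_\alpha \rangle = \xi_+ \cap \xi_-$ for a transverse pair of a positive contact structure $\xi_+$ and a negative contact structure $\xi_-$, and that by \cite{confoliations,Mitsumatsu} such a contact form $\alpha$ for $\xi$ can always be arranged. So the first step is to fix such an $\alpha$, hence a Reeb field $X_\alpha$, and to pick local sections $e_+ \in \xi \cap \xi_+$ and $e_- \in \xi \cap \xi_-$ forming an oriented basis of $\xi$; the contact conditions on $\xi_+$ and $\xi_-$ translate, by the Riemannian reformulation in Definition~\ref{contact}, into $g([e_+,X_\alpha],e_-)>0$ and $g([e_-,X_\alpha],e_+)>0$ for \emph{any} auxiliary metric $g$.

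The second step is to produce a compatible metric out of this data. Choose any complex structure $J$ on $\xi$ (for instance one with $Je_+$ proportional to $e_-$ up to a positive rescaling, though any $J$ will do) and any constant $\theta'>0$, and form the compatible metric $g = \frac{1}{\theta'}d\alpha(\cdot,J\cdot)+\alpha\otimes\alpha$. Now I invoke Proposition~\ref{riccimaxmeaning}: at a point $x$ where $Ricci(X_\alpha)(x) = \frac{\theta'^2}{2}$ we would have $g(e,\nabla_e X_\alpha) = 0$ for \emph{every} unit $e\in\xi$, which via Koszul's formula (as used in Corollary~\ref{curvform}) means $d\alpha([e,X_\alpha],e)=0$ for all $e\in\xi$ at $x$. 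But $d\alpha([e_+,X_\alpha],e_+)$ and the analogous expression control the sign behavior of $g(\cdot,\nabla_\cdot X_\alpha)$ on the cone between $e_+$ and $e_-$: since $g(e_+,\nabla_{e_+}X_\alpha)$ and $g(e_-,\nabla_{e_-}X_\alpha)$ have opposite signs to the mixed terms forced positive above (using $g(e,\nabla_e X_\alpha)+g(Je,\nabla_{Je}X_\alpha)=0$ from Proposition~\ref{compreeb}), the quadratic form $e\mapsto g(e,\nabla_e X_\alpha)$ on $\xi$ at $x$ is indefinite and nonzero, hence cannot vanish identically. Therefore $Ricci(X_\alpha)(x)<\frac{\theta'^2}{2}$ at every $x$, which is exactly the assertion (the equivalence with $\mathcal{L}_{X_\alpha}J\neq 0$ being part (3)$\iff$(1) of Proposition~\ref{riccimaxmeaning}).

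The one point requiring care — and the main obstacle — is making the sign argument uniform and coordinate-free: a priori $e_+$ and $e_-$ are only locally defined and $J$ is chosen independently of them, so one must check that ``$g(e,\nabla_e X_\alpha)$ is an indefinite nonzero quadratic form at $x$'' follows purely from the two strict inequalities $g([e_+,X_\alpha],e_-)>0$, $g([e_-,X_\alpha],e_+)>0$ and is independent of the auxiliary choices. This is handled by expanding $g(e,\nabla_e X_\alpha)$ in the basis $(e_+,e_-)$ using the Koszul identity $2\,g(\nabla_e X_\alpha,e) = g([e,X_\alpha],e) - g([e,e],X_\alpha) + g([e,X_\alpha],e)$ — the Lie bracket terms $g([e_+,X_\alpha],e_+)$, $g([e_-,X_\alpha],e_-)$ can be arbitrary but the cross term is pinned down in sign, so evaluating the form on $e_+$ and on $e_-$ and using that their sum with the $J$-rotated vector vanishes forces a sign change somewhere on the circle of unit vectors, hence nonvanishing. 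The rest is the cited Giroux--Mitsumatsu reformulation and Proposition~\ref{riccimaxmeaning}, both already available, so the proof is short once this bookkeeping is in place.
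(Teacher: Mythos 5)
Your route is the paper's own: the bi-contact characterization of conformal Anosovity, local sections $e_\pm$ of the line fields $\xi\cap\xi_\pm$, and Proposition~\ref{riccimaxmeaning} to exclude $Ricci(X_\alpha)=\frac{\theta'^2}{2}$ at every point. However, the step you yourself flag as the main obstacle is not actually closed by what you wrote, and a formula slip feeds into it. The slip: by Corollary~\ref{curvform}, $g(e,\nabla_e X_\alpha)=\frac{1}{\theta'}d\alpha([e,X_\alpha],Je)$, so what vanishes at a point of maximal Ricci curvature is $d\alpha([e,X_\alpha],Je)$ for all $e\in\xi$ (equivalently the symmetric, ``radial'' part of $e\mapsto g([e,X_\alpha],e)$), not $d\alpha([e,X_\alpha],e)$ --- the latter is the rotational term entering $Q(x)$ and is typically nonzero precisely at such points. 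The gap: the claim that for an \emph{arbitrary} compatible $J$ the cross term is ``pinned down in sign'' is unjustified. Positivity of $\xi_+$ (resp.\ negativity of $\xi_-$) controls only the sign of $\alpha_+([e_+,X_\alpha])$ (resp.\ $\alpha_-([e_-,X_\alpha])$), i.e.\ the component of the bracket transverse to $\xi_\pm$; it does not control the inner product $g([e_+,X_\alpha],e_-)$ when $g(e_+,e_-)\neq 0$. Indeed that quantity is not even well defined in sign: replacing $e_+$ by $f e_+$ changes it by $-(X_\alpha f)\,g(e_+,e_-)$. The same defect contaminates the polarization you need: at a point where the symmetric part of $g(\nabla_\cdot X_\alpha,\cdot)$ vanishes one only gets $g([e_+,X_\alpha],e_-)+g([e_-,X_\alpha],e_+)=-X_\alpha\,g(e_+,e_-)$, an uncontrolled derivative term for generic $J$. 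Also note the trace identity from Proposition~\ref{compreeb} cannot do any work here, since it holds for every compatible metric and is compatible with the form vanishing identically.

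The repair is to promote your parenthetical choice of $J$ to a requirement: choose the conformal structure on $\xi$ so that the two line fields $\xi\cap\xi_+$ and $\xi\cap\xi_-$ are $g$-orthogonal, i.e.\ $J(\xi\cap\xi_+)=\xi\cap\xi_-$; this can be done globally because the line fields exist. Then $g(e_+,e_-)\equiv 0$ and $g(X_\alpha,e_\pm)=0$, so $g([e_+,X_\alpha],e_-)$ and $g([e_-,X_\alpha],e_+)$ are positive multiples of the invariantly sign-definite transverse components, the scaling ambiguity disappears, and the error term $X_\alpha\,g(e_+,e_-)$ vanishes identically. Now the argument closes: if $Ricci(X_\alpha)=\frac{\theta'^2}{2}$ at some $x$, Proposition~\ref{riccimaxmeaning} gives $g(e,\nabla_e X_\alpha)=0$ for all $e\in\xi_x$, polarization gives $g(\nabla_{e_+}X_\alpha,e_-)+g(\nabla_{e_-}X_\alpha,e_+)=0$, hence $g([e_+,X_\alpha],e_-)+g([e_-,X_\alpha],e_+)=0$, contradicting the two strict inequalities. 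This is exactly the paper's intended Figure-2 argument via Remark~\ref{massive}; the paper states the two inequalities without comment, and they are honest consequences of the bi-contact condition precisely for a metric adapted to the pair of line fields, which is the choice your proof should make explicit rather than optional.
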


\subsection{Completion Of The Space of Compatible Metrics}\label{4.2}

In Subsection~\ref{topob}, we observed the contact topological subtlety of finding the global solutions to Ricci-Reeb realization problem and we can ask what is the best we can do to realize a function as $Ricci(X_\alpha)$. In order to establish almost global solutions to the Ricci-Reeb realization problem, we need some elements from the geometry of the space of Riemannian metrics on $M$, denoted by $\mathcal{M}$. Although the Riemannian geometry of $\mathcal{M}$, like geodesics, sectional curvature, etc. is studied in the classical literature, its metric geometry and in particular, its completion, was not understood well, until relatively recently, in the works of Brian Clarke \cite{BC1,BC2}.

It can be seen that $\mathcal{M}$ admits a natural Riemannian metric, often called $L^2$-metric, denoted by $(.,.)$ and induced from its inclusion into $S^2 T^* M$, the space of symmetric (0,2)-tensor fields on $M$. Let $g \in \mathcal{M}$ and $h,k \in T_g \mathcal{M}$:

$$(h,k):= \int_M trace(g^{-1} h g^{-1} k) dVol(g).$$

Notice that this is generalization of Weil-Peterson metric in Teichmuller theory. This inner product naturally defines a distance function $d$ on $\mathcal{M}$, which satisfies the following interesting and useful property, letting us control the distance between two metrics by controlling the volume of the set they differ on.

\begin{proposition}\label{cool}
Let $g_0 , g_1 \in \mathcal{M}$ and $E:=\{ x \in M | g_0 (x)=g_1 (x)      \}$. Then
$$d(g_0 , g_1 ) \leq C \left ( \sqrt{Vol(E,g_1)} + \sqrt{Vol(E,g_0)}   \right ),$$
where $C$ is a constant only depending on the dimension of $M$ and $Vol(E,g_i)$ is the volume of $E$ measured by $g_i$ for $i\in\{ 0,1 \}$.
\end{proposition}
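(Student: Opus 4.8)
The plan is to reduce the global distance estimate to a pointwise integral estimate along a path of metrics in $\mathcal{M}$, using the explicit $L^2$ Riemannian structure. First I would pick a convenient path $g_t$ from $g_0$ to $g_1$ and bound $d(g_0,g_1)$ by its length $L(g_t)=\int_0^1 \|\dot g_t\|_{g_t}\,dt$, where $\|\dot g_t\|_{g_t}^2=\int_M \operatorname{trace}(g_t^{-1}\dot g_t g_t^{-1}\dot g_t)\,dVol(g_t)$. The key observation is that if $g_0$ and $g_1$ agree outside $E$ (note: despite the displayed formula, the intended hypothesis is surely that $E$ is where they \emph{differ}, or its complement is where they agree — in any case the integrand $\dot g_t$ is supported on the set where $g_0\neq g_1$), then for \emph{any} path one can arrange $\dot g_t$ to be supported on that same set, so the inner integral is really only over $E$ (or over $M\setminus E$, matching whichever convention makes the statement true). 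Thus $L(g_t)=\int_0^1\left(\int_E \operatorname{trace}(g_t^{-1}\dot g_t g_t^{-1}\dot g_t)\,dVol(g_t)\right)^{1/2}dt$.

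Next I would choose the path so that the pointwise integrand is controlled: the natural choice, exploiting that $\mathcal{M}$ fibers over pointwise positive-definite cones, is to travel along pointwise geodesics of the $GL(n)/O(n)$-type metric on the fiber. Concretely, diagonalize $g_1$ with respect to $g_0$ and take $g_t$ to interpolate the eigenvalues geometrically (e.g. $g_t = g_0^{1/2}(g_0^{-1/2}g_1 g_0^{-1/2})^t g_0^{1/2}$ in local frames where this makes sense). Along such a path the quantity $\operatorname{trace}(g_t^{-1}\dot g_t g_t^{-1}\dot g_t)$ is \emph{constant in $t$} at each point $x\in E$ — this is the defining property of geodesics in the symmetric-space fiber — and the length contribution splits. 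Then $\|\dot g_t\|_{g_t}^2=\int_E c(x)\,dVol(g_t)(x)$ for a fixed function $c(x)\ge 0$, and since $dVol(g_t)$ also interpolates monotonically (its density is a power-mean of the densities of $g_0$ and $g_1$), one bounds $\int_E c(x)\,dVol(g_t)$ uniformly by something like $\int_E c(x)\,dVol(g_0)+\int_E c(x)\,dVol(g_1)$ up to a dimensional constant.

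The heart of the estimate is then a pointwise linear-algebra inequality: if $A$ is the symmetric positive-definite matrix representing $g_1$ relative to a $g_0$-orthonormal frame, and $c(x)=\operatorname{trace}\big((\log A)^2\big)$ is the squared fiber-distance, one must show that $\sqrt{c(x)}$ weighted by $\sqrt{\det A}$ or $1$ is itself integrable-square-bounded by volumes — i.e. that $\int_E \sqrt{c(x)}\,\big(1+\sqrt{\det A(x)}\big)\,dVol(g_0)\le C(\sqrt{Vol(E,g_0)}+\sqrt{Vol(E,g_1)})$ after applying Cauchy–Schwarz. Here Cauchy–Schwarz on $E$ turns $\int_E 1\cdot(\text{density})$ into $\sqrt{Vol(E)}$ times an $L^2$ norm, and the remaining factor $\sqrt{c(x)}$ must be absorbed; the genuinely nontrivial analytic input is that $\|\log A\|$ does not blow up faster than is compatible with $\det A$, which is a scale-invariance/homogeneity feature of the $L^2$ metric rather than a bound requiring control on $A$ itself. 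I expect the main obstacle to be this last step: making precise that the $L^2$ length of the geodesic between $g_0$ and $g_1$ is controlled purely by the volumes of $E$ with respect to the two endpoint metrics, with \emph{no} dependence on how large the pointwise distortion $A$ is — this is exactly the surprising feature of the $L^2$ metric (its metric completion is small, metrics can degenerate to lower rank at finite distance), and it is where I would lean most heavily on Clarke's work \cite{BC1,BC2}, quoting his explicit formula for fiberwise distance and his monotonicity estimates for the volume density along geodesics rather than reproving them.
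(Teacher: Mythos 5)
You correctly spotted that $E$ in the statement must be the set where $g_0\neq g_1$ (the displayed definition is a typo), but note that the paper does not prove this proposition at all: it is quoted from Clarke \cite{BC1,BC2}. The genuine gap in your proposal is the final absorption step, and it cannot be repaired: bounding $d(g_0,g_1)$ by the length of the fiberwise exponential path $g_t=g_0^{1/2}\,(g_0^{-1/2}g_1g_0^{-1/2})^t\,g_0^{1/2}$ forces you to control $\|\log A\|$ (where $A=g_0^{-1/2}g_1g_0^{-1/2}$) by the volume densities of $g_0$ and $g_1$, and no such control exists, because the traceless part of $\log A$ is invisible to $\det A$. Concretely, let $g_1$ differ from $g_0$ only on a fixed small set $E$ by a volume-preserving stretch, $A=\mathrm{diag}(s,1/s,1)$ in a $g_0$-orthonormal frame. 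Then $\det g_t=\det g_0$ for all $t$, so the speed of your path is exactly $\bigl(\int_E \mathrm{tr}\bigl((\log A)^2\bigr)\,dVol(g_0)\bigr)^{1/2}=\sqrt{2}\,|\log s|\,\sqrt{Vol(E,g_0)}$ (up to smoothing near $\partial E$), and its length tends to infinity as $s\to\infty$, while the claimed bound $C\bigl(\sqrt{Vol(E,g_0)}+\sqrt{Vol(E,g_1)}\bigr)$ stays fixed. The straight-line path $(1-t)g_0+tg_1$ fails for the same reason (its length grows like $s^{1/4}$). So no refinement of the estimates along a canonical interior interpolation can prove the proposition; those paths are simply too long, and the proposition is not a statement about them.

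The missing idea, which is the one Clarke's proof actually uses, is that in the $L^2$ metric the degenerate metrics are at finite distance, so the short path takes a detour through them. Deflate $g_0$ conformally on $E$: along $t\mapsto\varphi(t)g_0$ the speed is $\sqrt{n}\,|\varphi'|\,\varphi^{n/4-1}\sqrt{Vol(E,g_0)}$, so shrinking $\varphi$ from $1$ to $\epsilon$ costs at most $\tfrac{4}{\sqrt{n}}\sqrt{Vol(E,g_0)}$; then cross from $\epsilon g_0$ to $\epsilon g_1$ on $E$ along the linear path, at cost $O(\epsilon^{n/4})$, because the volume weight crushes the (possibly enormous) pointwise distortion; then re-inflate to $g_1$ at cost at most $\tfrac{4}{\sqrt{n}}\sqrt{Vol(E,g_1)}$. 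Since $g_0=g_1$ off $E$, nothing moves there, and the cutoff functions needed near $\partial E$ (as $E$ is only measurable and the path must stay in $\mathcal{M}$) are what produce the dimensional constant $C$. If you want a self-contained argument, replace your ``homogeneity/scale-invariance'' step by this deflate--cross--reinflate construction; otherwise simply cite Clarke's lemma, as the paper does.
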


Brian Clark characterized the completion of $\mathcal{M}$ as follows. Let $\overline{\mathcal{M}}$ be such completion and $\mathcal{M}_f$ be the space of measurable, symmetric, finite volume semi-metrics on $M$.

\begin{theorem}\label{metchar}
Using the above notations, we have the natural identification
$$\overline{\mathcal{M}}\simeq\mathcal{M}_f / \sim ,$$
where for $g_0 , g_1\in \mathcal{M}_f$, we have $g_0 \sim g_1$ if and only if for almost any $x \in M$, $g_0 (x)=g_1 (x)$ when at least one of them is non-degenerate. Such identification can be improved to an isometry.
\end{theorem}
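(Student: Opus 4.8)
The plan is to reproduce the strategy of Clarke \cite{BC1,BC2}: equip $\mathcal{M}_f/\sim$ with a metric $\bar d$ that restricts to the $L^2$-distance $d$ on the subspace $\mathcal{M}$ of smooth metrics, show that $(\mathcal{M},d)$ sits isometrically and densely inside $(\mathcal{M}_f/\sim,\bar d)$, show that $(\mathcal{M}_f/\sim,\bar d)$ is complete, and then invoke uniqueness of metric completions. The promotion to an isometry is then automatic, since $\bar d$ is by construction the unique extension of $d$ to the completion.

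First I would set up the pointwise model. At each $x\in M$ the space $\mathcal{P}_x$ of positive-definite symmetric bilinear forms on $T_xM$ inherits from $(.,.)$ a Riemannian metric making it a nonpositively curved symmetric space, whose distance $\rho_x$ can be written down explicitly by simultaneous diagonalization of $g_0(x)$ and $g_1(x)$. The key analytic input is a sharp comparison between $d(g_0,g_1)$ and an ``amalgamated'' functional $\Theta(g_0,g_1)$ built by integrating $\rho_x(g_0(x),g_1(x))$ against the varying volume densities; $\Theta$ is manifestly defined for arbitrary pairs of measurable, finite-volume semi-metrics, which furnishes the candidate distance $\bar d$ on $\mathcal{M}_f$. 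One then checks that $\Theta(g_0,g_1)=0$ precisely when $g_0$ and $g_1$ agree almost everywhere on the locus where at least one is nondegenerate, so $\bar d$ descends to a genuine metric on $\mathcal{M}_f/\sim$ with $\bar d|_{\mathcal{M}}=d$; in particular this comparison already contains the fact, not obvious for a weak Riemannian metric, that $d$ itself is nondegenerate on $\mathcal{M}$.

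Density of $\mathcal{M}$ is where Proposition~\ref{cool} does the work. Given $[g]\in\mathcal{M}_f/\sim$, I would first replace $g$ on its degeneracy locus by $\epsilon$ times a fixed smooth background metric; by an estimate as in Proposition~\ref{cool} this changes $\bar d$ by $O(\sqrt{\epsilon^n})$, reducing to the case where $g$ is nondegenerate almost everywhere with finite volume. A Lusin/Egorov argument then produces smooth positive-definite metrics $g_n$ agreeing with $g$ off a set $E_n$ with $\mathrm{Vol}(E_n,g)\to 0$ and $\mathrm{Vol}(E_n,g_n)\to 0$, and Proposition~\ref{cool} gives $\bar d(g_n,[g])\to 0$.

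For completeness, pass to a subsequence of a given $\bar d$-Cauchy sequence with $\bar d(g_n,g_{n+1})<2^{-n}$. This is where the main obstacle lies: one needs the \emph{lower} bound on $\Theta$, i.e.\ control of the pointwise behaviour of $g_n(x)$ in $\mathcal{P}_x$ from the single integrated quantity. Concretely, one must show that a further subsequence converges for almost every $x$ to a symmetric positive-semidefinite form $g_\infty(x)$, that $x\mapsto g_\infty(x)$ is measurable of finite volume, and that $\bar d(g_n,[g_\infty])\to 0$; the delicate point is that the conformal (volume-changing) and trace-free directions of $\mathcal{P}_x$ must be prevented from degenerating or running off to infinity independently on a set of positive measure, and the volume weighting in $\Theta$ must be shown to control the regions where $\rho_x(g_n,g_m)$ remains large. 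Once pointwise a.e.\ convergence and the finite-volume bound are in hand, $\bar d(g_n,[g_\infty])\to 0$ follows by dominated convergence in the integral defining $\Theta$, and combining this with density and $\bar d|_{\mathcal{M}}=d$ identifies $\overline{\mathcal{M}}$ with $\mathcal{M}_f/\sim$ as metric spaces.
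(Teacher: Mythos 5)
The first thing to say is that the paper does not prove this statement at all: Theorem~\ref{metchar} is imported verbatim from Clarke \cite{BC1,BC2} as background for Section~\ref{4.2}, so there is no internal proof to compare against. Judged on its own terms, your proposal is an accurate high-level reconstruction of Clarke's actual strategy (fibrewise symmetric-space model on positive-definite forms, an integrated pointwise distance $\Theta$ defined on all of $\mathcal{M}_f$, density of $\mathcal{M}$ via volume estimates of the kind in Proposition~\ref{cool}, completeness of $\mathcal{M}_f/\sim$, and uniqueness of metric completions). One small point in your favour: you implicitly use Proposition~\ref{cool} with $E$ the set where the two metrics \emph{differ}, which is the correct form (the paper's statement, taking $E$ to be the agreement set, is a typo).

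The gap is that the two steps you describe as "key analytic input" and "the main obstacle" are the entire content of Clarke's theorem, and your proposal does not carry them out. The $L^2$-distance $d$ is an infimum of path lengths in $\mathcal{M}$, so while the upper bound $d\lesssim\Theta$ follows from exhibiting explicit (e.g.\ fibrewise-geodesic) paths, the lower bound --- needed both to show $d$ is nondegenerate on $\mathcal{M}$ and to prove completeness of $\mathcal{M}_f/\sim$ --- is the hard direction and is not a pointwise comparison, because a competing path may leave any neighbourhood of the two endpoints. Likewise, completeness is not a dominated-convergence argument: a $d$-Cauchy sequence controls only an integral weighted by volume densities that may themselves collapse, and extracting a.e.\ pointwise convergence requires Clarke's $\omega$-convergence machinery and the bookkeeping of deflated sets --- exactly the structures the paper has to introduce separately for Theorem~\ref{convchar}. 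As written, your argument is a faithful table of contents for \cite{BC1,BC2} rather than a proof; since the paper itself treats the result as a black box, the honest move is to cite it rather than to claim the two unproved estimates as checks.
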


Moreover, in order to understand $L^2$-limit of metrics, we need to control how metrics {\ttfamily"}degenerate{\ttfamily"} on measurable subsets of $M$.

\begin{definition}
Let $\tilde{g}\in \mathcal{M}_f$. We define
$$X_{\tilde{g}}:=\{ x\in M | \tilde{g} (x) \text{ is degenerate}    \} \subset M,$$
which we call the {\ttfamily"}deflated{\ttfamily"} set of $\tilde{g}$.
\end{definition}

\begin{definition}
Let $\{ g_k \}_{k\in\mathbb{N}} \subset M$ be any sequence. We define the set
$$D_{\{ g_k  \}_{k\in\mathbb{N}}} :=\{ x \in M | \forall \delta >0, \exists k \in \mathbb{N} \text{ s.t. } \det{G_k (x)} <\delta         \},$$
where $G_k$ is $g$-dual of $g_k$ for some fixed $g\in \mathcal{M}$.We call $D_{\{ g_k  \}_{k\in\mathbb{N}}}$ the {\ttfamily"}deflated{\ttfamily"} set of $\{   g_k \}$. This definition does not depend on the choice of $g$.

\end{definition}

Although the conditions of convergence in the following theorem can be relaxed extensively, in order to avoid introducing further notions, we give the following theorem which suffices for our purpose (see \cite{BC1} Definition 4.4, Theorem 4.3 and Theorem 5.19).

\begin{theorem}\label{convchar}
Using the above characterization of $\overline{\mathcal{M}}$, we have
$$ \{g_k \} \to [ g_\infty ],$$
if $ \{g_k \}$ is $d$-Cauchy and

1) $\Sigma_{k=1}^\infty d(g_k , g_{k+1})<\infty$;

2) $X_{g_\infty}$ and $D_{ \{ g_k \} }$ differ at most by a nullset;

3) $g_k(x) \to g_\infty (x)$ for almost every $x\in M \backslash D_{\{ g_k\}}$.

\end{theorem}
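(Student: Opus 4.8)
The plan is to exploit the completeness of $\overline{\mathcal{M}}$ together with the concrete identification $\overline{\mathcal{M}} \simeq \mathcal{M}_f/\sim$ of Theorem~\ref{metchar} to pin down the limit. Since $\{g_k\}$ is assumed $d$-Cauchy, it converges to some class $[\bar g] \in \overline{\mathcal{M}}$; writing $\bar g \in \mathcal{M}_f$ for a representative, the task reduces to showing $\bar g \sim g_\infty$, i.e. that $\bar g(x) = g_\infty(x)$ for almost every $x$ at which at least one of the two is non-degenerate. Equivalently, I would prove the sharper statement that $d(g_k, g_\infty) \to 0$, with $g_\infty$ regarded as an element of the completion through its class in $\mathcal{M}_f/\sim$.

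To estimate $d(g_k, g_\infty)$, I would first recall the fiberwise geometry underlying the $L^2$ metric $(\cdot,\cdot)$: at each point $x$ the positive cone of symmetric $(0,2)$-tensors carries the model metric whose integrand appears in the definition of $(\cdot,\cdot)$, with an associated distance $d_x$ that extends continuously to degenerate limits. The global distance is controlled by the pointwise distances $d_x\big(g_k(x), g_\infty(x)\big)$, up to the complication that the $L^2$ metric integrates against the $g$-dependent volume $dVol(g)$; this coupling is handled by splitting a metric into its volume density and its conformal part. The argument then decomposes $M = (M \setminus D_{\{g_k\}}) \cup D_{\{g_k\}}$ and estimates the contribution of each region separately.

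On $M \setminus D_{\{g_k\}}$, hypothesis (3) gives $g_k(x) \to g_\infty(x)$ for almost every $x$, with all forms involved non-degenerate, so $d_x\big(g_k(x), g_\infty(x)\big) \to 0$ pointwise a.e. there. In the spirit of the Riesz--Fischer proof of completeness of $L^p$, the summability hypothesis (1) provides, via $\sum_k d(g_k, g_{k+1}) < \infty$ and the triangle inequality, an $L^1$ control on the tails that yields a dominating function; a dominated-convergence argument then forces the contribution of this region to vanish in the limit. The delicate part --- and the step I expect to be the main obstacle --- is the contribution of the deflated set $D_{\{g_k\}}$, which by hypothesis (2) agrees up to a nullset with $X_{g_\infty}$, the set where $g_\infty$ itself degenerates. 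There the fiber distance $d_x$ becomes singular as one approaches the boundary of the positive cone, so pointwise convergence alone is useless. To control this region I would combine the matching of deflated sets (2) with the volume estimate of Proposition~\ref{cool}: the summability (1) prevents $L^2$-mass from escaping to infinity in the fibers, which, together with Proposition~\ref{cool} applied to the sets on which successive metrics differ appreciably, forces the volume carried by the $g_k$ over the degenerating region to shrink to zero at a controlled rate. Hence the contribution of $D_{\{g_k\}}$ to $d(g_k, g_\infty)$ also tends to zero.

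Combining the two estimates yields $d(g_k, g_\infty) \to 0$, so the $d$-limit $[\bar g]$ of $\{g_k\}$ is represented by $g_\infty$; by Theorem~\ref{metchar} this is precisely the assertion $\{g_k\} \to [g_\infty]$ in $\overline{\mathcal{M}}$. The only genuinely technical ingredient is the behavior of the $L^2$ geometry near the degenerate boundary of the fiber cone, where the interplay of conditions (1) and (2) must be used to trade uncontrolled pointwise behavior for controlled volume decay.
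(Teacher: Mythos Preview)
The paper does not actually prove this theorem. Theorem~\ref{convchar} is stated as a result quoted from Brian Clarke's work on the completion of $\mathcal{M}$; the paper explicitly refers the reader to \cite{BC1} (Definition~4.4, Theorem~4.3 and Theorem~5.19) and remarks that the hypotheses could be relaxed further at the cost of introducing more notions. There is therefore no in-paper proof to compare your proposal against.

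That said, your sketch is broadly in the spirit of Clarke's argument: the identification of the abstract Cauchy limit with a concrete representative in $\mathcal{M}_f/\!\sim$ does go through a pointwise-a.e.\ analysis off the deflated set combined with a volume-collapse argument on it, and the summability condition (1) is precisely what allows one to pass to a subsequence with controlled tails (the Riesz--Fischer analogy you invoke is apt). Where your outline is thinnest is exactly where you flag it: the behavior near the boundary of the fiber cone. In Clarke's treatment this is handled not by Proposition~\ref{cool} directly but by a careful analysis of the fiberwise distance $\theta^g_x$ and its extension to degenerate tensors, together with an ``$\omega$-convergence'' notion that packages conditions (1)--(3). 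Your appeal to Proposition~\ref{cool} on the deflated region is suggestive but not quite the mechanism; the actual control comes from the fact that the fiber metric has finite diameter toward the degenerate boundary (so collapsing volume forces small $L^2$-distance), rather than from comparing sets where two metrics differ. If you want to turn your sketch into a self-contained proof, that is the step that needs to be made precise.
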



\subsection{The Proof of Almost Global Realization Theorem}\label{4.3}

\begin{theorem}
Let $(M,\xi)$ be a closed oriented contact 3-manifold, $\frac{\theta'^2}{2} \geq f(x):M \to \mathbb{R}$ a function on $M$ and $V$ a positive real number. Then there exists a singular metric $g_{\infty}$ with instantaneous rotation $\theta'$ and an embedded compact surface with boundary $F \subset M$ such that

1) $g_\infty$ is a compatible metric on $M \backslash F$,

2) $Ricci(X_\alpha) (x)=f(x)$ on $M \backslash F$, where $X_\alpha$ is the Reeb vector field associated with $g_\infty$,

3) $Vol(g_\infty )=V$,

4) $g_\infty$ can be realized as an element of the completion of the space of compatible Riemannian metrics $\overline{\mathcal{M}_\xi} \subset \overline{\mathcal{M}}$. More precisely, given any $\epsilon >0$,  $[g_\infty]$ is the limit of a $L^2$-Cauchy sequence of compatible metrics $\{ g_n \} \rightarrow [g_\infty]\in \overline{\mathcal{M_\xi}} \subset \overline{\mathcal{M}}\simeq\mathcal{M}_f / \sim$, such that $g_n$ realizes the given function as $Ricci(X_\alpha)$, outside a $\frac{\epsilon}{2^n}$-neighborhood of $F$.

\end{theorem}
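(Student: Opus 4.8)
The plan is to run the local construction of Theorem~\ref{localreal} simultaneously along all Reeb flow lines, organizing the flow by an open book adapted to $\xi$, and then to invoke Clarke's description of the $L^{2}$-completion (Theorem~\ref{metchar}, Proposition~\ref{cool}, Theorem~\ref{convchar}) to recognize the resulting singular metric as a genuine limit of compatible metrics. Two observations drive the argument. The first is that, by Remark~\ref{instan}, the volume form of a compatible metric with contact form $\alpha$ and instantaneous rotation $\theta'$ equals $\tfrac{1}{\theta'}\alpha\wedge d\alpha$, independent of the complex structure. Accordingly I first fix $\theta'>0$ with $\sup_M f\le\tfrac{\theta'^2}{2}$ (possible since $M$ is compact), choose by Theorem~\ref{giroux} an open book $(B,\pi)$ adapted to $\xi$ with nonempty binding together with a contact form $\alpha$ for $\xi$ whose Reeb field is positively transverse to the pages and positively tangent to $B$ (Definition~\ref{adapted}), and then rescale $\alpha$ by a suitable positive constant — which scales $\int_M\alpha\wedge d\alpha$ while keeping the Reeb field transverse to the pages — so that $\tfrac{1}{\theta'}\int_M\alpha\wedge d\alpha=V$. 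Every compatible metric with this $(\alpha,\theta')$ then automatically has volume $V$, settling item (3). I set $F:=\Sigma_{\tau_0}$ for a fixed $\tau_0\in\mathbb{S}^1$, a closed page — an embedded compact surface with $\partial F=B$ — so that $\pi^{-1}(\tau_0)=\operatorname{int}F$. Every Reeb orbit not contained in $F$ is transverse to all pages, hence meets $\Sigma_{\tau_0}$, so $M\setminus F$ is a flow box: each $p\in M\setminus F$ is uniquely $\psi_t(q)$ with $q\in\operatorname{int}F$ and $0<t<T(q)$, where $\psi$ is the Reeb flow and $T(q)$ the finite first-return time, with $T(q)\to\infty$ as $q\to B$.

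The second observation is that the profile equations of Theorem~\ref{localreal} trivialize when the trivializing section of $\xi$ is Reeb-invariant. Since $\operatorname{int}F$ is homotopy equivalent to a $1$-complex, $\xi$ has a nowhere-zero section $e_0$ over it; pushing forward by the flow, $e:=(\psi_t)_*e_0$ is a nowhere-zero section of $\xi$ over $M\setminus F$ with $[e,X_\alpha]=0$ — an $\alpha$-Jacobi field, using $\mathcal{L}_{X_\alpha}\xi=0$ from Proposition~\ref{reebprop}. Starting from any compatible metric with complex structure $J$ and rotation $\theta'$, I apply Lemma~\ref{pert} on the line bundle $\langle e\rangle$ with $\eta\equiv1$, so that the perturbed complex structure is $J_*\colon e\mapsto Je+\lambda e$ (one checks $J_*$ is again $d\alpha$-compatible, so $g_*$ is an honest compatible metric of rotation $\theta'$). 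Because $[e,X_\alpha]=0$ annihilates every term $d\alpha([e,X_\alpha],\cdot)$ occurring in Lemma~\ref{pert} — in particular the one quadratic in $\lambda$ — the realization equation $Ricci_*(X_\alpha)=f$ collapses to the first-order ODE $X_\alpha\lambda=2\sqrt{\tfrac{\theta'^2}{4}-\tfrac{f}{2}}+\tfrac{1}{\theta'}d\alpha([Je,X_\alpha],Je)$ along Reeb orbits, whose solution with $\lambda|_{\operatorname{int}F}=0$ is obtained by integrating this smooth function along the (compact) orbit segment from $\Sigma_{\tau_0}$ to $\psi_t(q)$. This is smooth and finite on all of $M\setminus F$, with no interior blow-up, so the resulting metric $g_\infty$ is an honest smooth compatible metric on $M\setminus F$ with instantaneous rotation $\theta'$ and volume $V$, and it realizes $Ricci(X_\alpha)=f$ there; this gives items (1)--(3). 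Generically $\lambda$ fails to return to $0$ as $t\to T(q)^{-}$, so $g_\infty$ does not extend across $F$ — as it must not, by the obstructions recalled in Subsection~\ref{topob}.

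For item (4), given $\epsilon>0$ I choose for each $n$ a neighborhood $N_n$ of $F$ contained in the $\tfrac{\epsilon}{2^{n}}$-neighborhood of $F$ and small enough that $\tfrac{1}{\theta'}\int_{N_n}\alpha\wedge d\alpha\le 4^{-n}$ (possible since $\bigcap_n N_n=F$ has measure zero), and I let $g_n$ equal $g_\infty$ outside $N_n$ and, near $\partial N_n$, deform the complex structure $J_*$ so as to extend it as a $d\alpha$-compatible complex structure over all of $N_n$ — possible since such complex structures form a contractible space. Then $g_n$ is an honest smooth compatible metric on $M$, of rotation $\theta'$ and volume $V$, realizing $f$ outside the $\tfrac{\epsilon}{2^{n}}$-neighborhood of $F$. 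Since $N_{n+1}\subset N_n$, the metrics $g_n$ and $g_{n+1}$ differ only on $N_n$, so Proposition~\ref{cool} together with the $J$-independence of the volume form gives $d(g_n,g_{n+1})\le 2C\sqrt{\tfrac{1}{\theta'}\int_{N_n}\alpha\wedge d\alpha}\le 2C\cdot2^{-n}$; hence $\sum_n d(g_n,g_{n+1})<\infty$ and $\{g_n\}$ is $L^{2}$-Cauchy. Moreover the area form of each $g_n$ on $\xi$ is the fixed form $\tfrac{1}{\theta'}d\alpha$ (Remark~\ref{instan}), whence $\det G_n\equiv1$; thus the deflated set $D_{\{g_n\}}$ is empty and agrees, up to the nullset $F$, with $X_{g_\infty}$, while $Vol(g_\infty)=V<\infty$ shows $g_\infty\in\mathcal{M}_f$. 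Finally, for each $p\notin F$ one has $g_n(p)=g_\infty(p)$ for all large $n$, so $g_n\to g_\infty$ almost everywhere. By Theorem~\ref{convchar}, $\{g_n\}\to[g_\infty]$ in $\overline{\mathcal{M}}$; since the $g_n$ are compatible, $[g_\infty]\in\overline{\mathcal{M}_\xi}$, and through Theorem~\ref{metchar} this exhibits $g_\infty$ as the asserted element of $\mathcal{M}_f/\!\sim$.

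The one genuine difficulty is obtaining the profile \emph{globally} along Reeb orbits. For a generic trivializing section $e$, the term of Theorem~\ref{localreal} that is quadratic in the unknown forces finite-time blow-up long before a flow line returns to $F$, which would destroy $g_\infty$ at interior points; choosing $e$ Reeb-invariant (an $\alpha$-Jacobi field) is exactly what removes that term and pins $\eta\equiv1$, reducing the equation to an elementary integral — this is the crux that upgrades Theorem~\ref{localreal} from local to ``almost global''. All the remaining steps — building the capping metrics $g_n$, bounding $d(g_n,g_{n+1})$, and verifying the three hypotheses of Theorem~\ref{convchar} — are then routine once one notices that the volume form, and hence each $\det G_n$, depends only on $(\alpha,\theta')$ and not on $J$.
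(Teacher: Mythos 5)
Your proof follows the same architecture as the paper's: fix an adapted open book $(B,\pi)$ with contact form $\alpha$ rescaled so that $\frac{1}{\theta'}\int_M\alpha\wedge d\alpha=V$, take $F$ to be a closed page, propagate the profile equations of Theorem~\ref{localreal} along Reeb orbits from $\operatorname{int}F$ around to itself, and then feed the resulting singular metric into Clarke's machinery (Proposition~\ref{cool}, Theorems~\ref{metchar} and~\ref{convchar}) exactly as in Subsection~\ref{4.3}. The one place you genuinely depart from the paper is the choice of trivializing line field, and it is a substantive improvement rather than a cosmetic one: equation $(2)$ in the proof of Theorem~\ref{localreal} is Riccati-type, with quadratic coefficient $\frac{1}{2\theta'}d\alpha([e,X_\alpha],e)$, so for an arbitrary trivialization its solution can blow up in finite time before an orbit returns to the page --- a point the paper's proof passes over when it asserts that the local solution ``extends over $\Sigma_\tau\setminus B$ for $0<\tau<1$''. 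Taking $\langle e\rangle$ to be the push-forward under the Reeb flow of a line field on $\operatorname{int}F$ makes $\langle e\rangle$ Reeb-invariant, which is precisely the condition $d\alpha([e,X_\alpha],e)=0$ for the unit section of that line field; this kills the quadratic term and the $\mu$-coupling in equation $(1)$, so both equations become explicitly solvable along each finite orbit segment and no interior blow-up can occur. That is the right fix, and it is the crux you correctly identify.

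One correction of detail: you cannot literally take $\eta\equiv1$ and discard every $d\alpha([e,X_\alpha],\cdot)$ term, because the formulas of Lemma~\ref{pert} are derived for a $g$-unit section $e$, and a section that is simultaneously Reeb-invariant and unit exists only where $g(e,\nabla_eX_\alpha)=0$. What Reeb-invariance of the \emph{line field} gives for its unit section is $d\alpha([e,X_\alpha],e)=0$ together with $\frac{1}{\theta'}d\alpha([e,X_\alpha],Je)=X_\alpha\bigl(\ln|\tilde e|\bigr)$, where $\tilde e$ is the invariant (non-unit) section; equation $(1)$ then integrates to an explicit positive $\eta$ that is not identically $1$, and equation $(2)$ becomes \emph{linear} in $\mu$ rather than a pure quadrature. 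This does not affect your conclusion --- linear ODEs with smooth coefficients have global solutions on each orbit segment, so $g_\infty$ is still an honest compatible metric on $M\setminus F$ --- but your displayed formula for $X_\alpha\lambda$ should be adjusted accordingly. The remaining steps (volume-form independence of $J$, the bound $d(g_n,g_{n+1})\le 2C\cdot 2^{-n}$, and the verification of the hypotheses of Theorem~\ref{convchar}) match the paper's, with your abstract extension of $J_*$ over the neighborhoods $N_n$ via contractibility of the space of $d\alpha$-compatible complex structures replacing the paper's explicit interpolation $(1-h)J_{(\lambda,\eta)}+hJ$ (which, as written there, requires a pointwise renormalization to square to $-1$; your formulation sidesteps that issue).
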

\begin{figure}\label{fig3}
  \center \begin{overpic}[width=4cm]{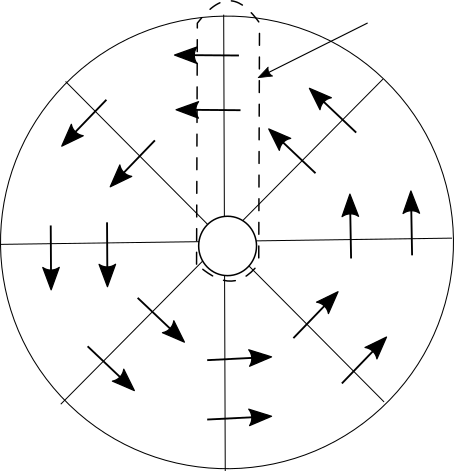}

    \put(40,120){$\Sigma_0\simeq \Sigma_1$}
      \put(115,60){$\Sigma_{\frac{3}{4}}$}
      \put(-15,60){$\Sigma_{\frac{1}{4}}$}
      \put(55,-8){$\Sigma_{\frac{1}{2}}$}
      \put(92,112){$\epsilon$-neighborhood}
      \put(53,53){B}

  \end{overpic}
  \vskip0.2cm
\caption{Using open book decomposition and the flow of $X_\alpha$ to establish almost global realization}
\end{figure}
\begin{proof}
Let $(B,\pi)$ be an open book decomposition adapted to $\xi$ and $\alpha$ a contact form for $\xi$ satisfying the condition of Definition~\ref{adapted}. After multiplying $\alpha$ by a constant, we can assume $Vol(g)=\frac{1}{\theta'} \alpha \wedge d\alpha=V$, for any compatible metric $g$ associated with $\alpha$.

Choose an arbitrary complex structure $J$ on $\xi$, inducing the compatible metric $g$. Parametrizing $\mathbb{S}^1 \simeq [0,1] / 0\sim 1$, consider $J|_{\Sigma_{0}\backslash B}$ to be initial condition for the PDE described in Lemma~\ref{pert} and since the interior of pages of $(B,\pi)$ are transverse to $X_\alpha$, we can solve such PDE (as in local realization theorem) and extend the solution of realization problem over $\Sigma_\tau \backslash B$ for $0< \tau <1$, i.e. $M\backslash \Sigma_{0}$. The achieved complex structure $J_{(\lambda,\eta )}$ on $M\backslash \Sigma_{0}$ yields a singular (measurable) compatible metric $g_\infty$, satisfying 1)-3) with $F:=\Sigma_{0}$ being the singular set. Also note that the volume form of $g_\infty$ is the same as $g$, since $F$ is measure zero. See Figure 3.

Now we can realize the measurable semi-metric $g_\infty$ as the limit of a $L^2$-Cauchy sequence of compatible metric, using Theorem~\ref{metchar} and Theorem~\ref{convchar} in the following way. For any fixed $\epsilon>0$, choose small enough $\delta >0$, such that $$Vol(E:= \bigcup\limits_{1-\delta \leq \tau \leq 1}  \left ( \Sigma_\tau \backslash B \right ) , g ) <\frac{\epsilon}{2}.$$ Now since $X_\alpha$ induces a product structure on $E$, we can use a smooth interpolation function $h_\delta :[0,1]\to [0,1]$ with $h_\delta(\tau)=0$ for $0 \leq \tau \leq 1-\delta $ and $h_\delta(1)=1$ and see that the the complex structure $$\tilde{J_\epsilon}|_{\Sigma_\tau}:= (1-h_\delta(\tau))J_{(\lambda,\eta)} + h_\delta(\tau) J$$ for $\tau \in \mathbb{S}^1$ can be extended over $\Sigma_0 \backslash B \simeq \Sigma_1 \backslash B$, yielding a singular compatible metric, which is singular on $B$ and has $Ricci(X_\alpha)=f(x)$ outside of a $\frac{\epsilon}{2}$-neighborhood of $\Sigma_0$. Similarly, with a smooth radial interpolation between $\tilde{J_\epsilon}$ and $J$ in a product $\frac{\epsilon}{2}$-neighborhood of $B$, we can define $J_\epsilon$ and consequently the compatible $g_\epsilon$ on all of $M$, such that $Ricci(X_\alpha)=f(x)$ outside of a $\epsilon$-neighborhood of $\Sigma_0$. We claim that repeating this procedure for $\epsilon_n :=\frac{\epsilon}{2^n}$ gives the sequence described in 4).

First, notice that for all the metrics above, we only perturbed the complex structure, leaving the volume form unchanged. Therefore by Proposition~\ref{cool}, $$d(g_{\epsilon_n}, g_{\epsilon_m})\leq 2C \sqrt{\frac{\epsilon}{2^{min\{ m,n \}}}}$$ and $g_{\epsilon_n}$ is a Cauchy sequence and moreover satisfies condition 1) of Theorem~\ref{convchar}. Now, note that for any $x\in M\backslash \Sigma_0$, there exists $N\in \mathbb{N}$ such that for $n\geq N$, $g_{\epsilon_n}=g_\infty$ and hence, we have condition 3) of Theorem~\ref{convchar}. That also means that $D_{ \{ g_{\epsilon_n}\} }$ is included in the measure zero set $X_{g_\infty}=F=\Sigma_0$. Therefore by Theorem~\ref{convchar},  $\{ g_{\epsilon_n}\}_{n\in\mathbb{N}}$ $L^2$-converges to $[g_\infty] \in \overline{\mathcal{M}_\xi}$.

\end{proof}


\Addresses


\begin{thebibliography}{00}

\bibitem{alexander} J. W. Alexander, {\em Note on Riemann spaces}, Bull. Amer. Math. Soc., 26 (1920), 370–372.
\bibitem{berger} Berger M., {\em Les vari'et'es Riemanniennes 1/4-pinc'ees}, Ann. Scuola Norm. Sup. Pisa 14, 161–170 (1960)
\bibitem{blsign} Blair, David. (2019). {\em On the Sign of the Curvature of a Contact Metric Manifold}. Mathematics. 7. 892. 10.3390/math7100892. 
\bibitem{bl} Blair, David E. {\em Riemannian geometry of contact and symplectic manifolds.} Springer Science \& Business Media, 2010.
\bibitem{blperr} Blair, David E., and D. Peronne. {\em Conformally Anosov flows in contact metric geometry.} Balkan Journal of Geometry and Its Applications 3.2 (1998): 33-46.
\bibitem{bw} Boothby, William M., and Hsieu-Chung Wang. {\em On contact manifolds.} Annals of Mathematics (1958): 721-734.
\bibitem{boy} Boyer, Charles P., Leonardo Macarini, and Otto van Koert. {\em Brieskorn manifolds, positive Sasakian geometry, and contact topology.} Forum Mathematicum. Vol. 28. No. 5. De Gruyter, 2016.
\bibitem{BC1} Clarke, Brian. {\em The Completion of the Manifold of Riemannian Metrics with Respect to its $ L^ 2$ Metric.} arXiv preprint arXiv:0904.0159 (2009).
\bibitem{BC2} Clarke, Brian. {\em The metric geometry of the manifold of Riemannian metrics over a closed manifold.} Calculus of Variations and Partial Differential Equations 39.3-4 (2010): 533-545.
\bibitem{elover} Eliashberg, Yakov. {\em Classification of overtwisted contact structures on 3-manifolds.} Inventiones mathematicae 98.3 (1989): 623-637.
\bibitem{elmart} Eliashberg, Yakov. {\em Contact 3-manifolds twenty years since J. Martinet's work.} Annales de l'institut Fourier. Vol. 42. No. 1-2. 1992.
\bibitem{confoliations} Eliashberg, Yakov, and William P. Thurston. {\em Confoliations.} Vol. 13. American Mathematical Soc., 1998.
\bibitem{etnyreintro} Etnyre, John B. {\em Introductory lectures on contact geometry.} arXiv preprint math/0111118 (2001).
\bibitem{OBD} Etnyre, John B. {\em Lectures on open book decompositions and contact.} Floer Homology, Gauge Theory, and Low-dimensional Topology: Proceedings of the Clay Mathematics Institute 2004 Summer School, Alfréd Rényi Institute of Mathematics, Budapest, Hungary, June 5-26, 2004. Vol. 5. American Mathematical Soc., 2006.
\bibitem{InvOp} Etnyre, John, and Burak Ozbagci. {\em Invariants of contact structures from open books.} Transactions of the American Mathematical Society 360.6 (2008): 3133-3151.
\bibitem{EKM1} Etnyre, John B., Rafal Komendarczyk, and Patrick Massot. {\em Tightness in contact metric 3-manifolds.} Inventiones mathematicae 188.3 (2012): 621-657.
\bibitem{EKM2} Etnyre, John, Rafal Komendarczyk, and Patrick Massot. {\em Quantitative Darboux theorems in contact geometry.} Transactions of the American Mathematical Society 368.11 (2016): 7845-7881.
\bibitem{JH} Ge, Jian, and Yang Huang. {\em 1/4-Pinched Contact Sphere Theorem."} arXiv preprint arXiv:1304.5224 (2013).
\bibitem{his} Geiges, Hansjörg. {\em A brief history of contact geometry and topology.} Expositiones Mathematicae 19.1 (2001): 25-53.
\bibitem{Geiges} Geiges, Hansjörg. {\em Contact geometry.} Handbook of differential geometry. Vol. 2. North-Holland, 2006. 315-382.
\bibitem{giroux} E. Giroux, {\em G'eom'etrie de contact: de la dimension trois vers les dimensions sup'erieures}, Proceedings of the International Congress of Mathematicians, Vol. II (Beijing, 2002), 405–414, Higher Ed. Press, Beijing, 2002.
\bibitem{hamilton} Hamilton, Richard S. {\em Three-manifolds with positive Ricci curvature}. J. Differential Geom. 17 (1982), no. 2, 255--306. doi:10.4310/jdg/1214436922. https://projecteuclid.org/euclid.jdg/1214436922
\bibitem{hoferdy} Hofer, Helmut, Markus Kriener. {\em Holomorphic curves in contact dynamics.} Proceedings of Symposia in Pure Mathematics. Vol. 65. AMERICAN MATHEMATICAL SOCIETY, 1999.
\bibitem{Hoz} Hozoori, Surena. {\em Dynamics and Topology of Conformally Anosov Contact 3-Manifolds. } arXiv preprint arXiv:1908.07990 (2019).
\bibitem{HT} Hutchings, Michael, and Clifford Henry Taubes. {\em The Weinstein conjecture for stable Hamiltonian structures.} Geometry \& Topology 13.2 (2009): 901-941.
\bibitem{kling} Klingenberg W., Uber Riemannsche Mannigfaltigkeiten mit positiver Krummung,
Comment. Math. Helv. 35, 47–54 (1961)
\bibitem{Kob} Kobayashi, Shoshichi. {\em Principal fibre bundles with the 1-dimensional toroidal group.} Tohoku Mathematical Journal, Second Series 8.1 (1956): 29-45.
\bibitem{kroug} Krouglov, Vladimir. {\em A note on the conjecture of Blair in contact Riemannian Geometry.} Tohoku Mathematical Journal, Second Series 64.4 (2012): 561-567.
\bibitem{lokhamp} Lohkamp, Joachim. {\em Metrics of Negative Ricci Curvature.} Annals of Mathematics, vol. 140, no. 3, 1994, pp. 655–683. JSTOR, www.jstor.org/stable/2118620.
\bibitem{Massot} Massot, Patrick. {\em Geodesible contact structures on 3–manifolds.} Geometry \& Topology 12.3 (2008): 1729-1776.
\bibitem{Mitsumatsu} Mitsumatsu, Yoshihiko. {\em Anosov flows and non-Stein symplectic manifolds.} Annales de l'institut Fourier. Vol. 45. No. 5. 1995.
\bibitem{Nied} Niederkrüger, Klaus, and Federica Pasquotto. {\em Resolution of symplectic cyclic orbifold singularities.} arXiv preprint arXiv:0707.4141 (2007).
\bibitem{perr} Perrone, Domenico. {\em Torsion and conformally Anosov flows in contact Riemannian geometry.} Journal of Geometry 83.1-2 (2005): 164-174.
\bibitem{Ruk} Rukimbira, Phillippe. {\em Chern-Hamilton Conjecture and K-contactness.} Hous. Jour. of Math. 21, No 4, (1995), 709-718.
\bibitem{Sakai} Sakai, Takashi. {\em Riemannian Geometry}. Translations of Mathematical Monographs, vol. 149. American Mathematical Society, Providence, RI (1996): 262-272.
\bibitem{AlReg} Thomas, C. B. {\em Almost regular contact manifolds.} Journal of Differential Geometry 11.4 (1976): 521-533.
\bibitem{thurwilk} W. P. Thurston, H. Winkelnkemper, {\em On the existence of contact forms}, Proc. Amer. Math. Soc. 52 (1975), 345–347.


\end{thebibliography}
\end{document}